\documentclass[11pt,letterpaper]{article}

\usepackage{comment,url,algorithm,algorithmic,graphicx,subcaption,relsize}
\usepackage{amssymb,amsfonts,amsmath,amsthm,amscd,dsfont,mathrsfs,mathtools,microtype,nicefrac,pifont}
\usepackage{float,psfrag,epsfig,color,url,hyperref}
\usepackage{upgreek}
\usepackage[dvipsnames]{xcolor}
\usepackage{epstopdf,bbm,mathtools,enumitem}
\usepackage[toc,page]{appendix}
\usepackage[mathscr]{euscript}
\usepackage[giveninits=true, style=alphabetic]{biblatex}

\usepackage[top=1in, bottom=1in, left=1in, right=1in]{geometry}




\def\balign#1\ealign{\begin{align}#1\end{align}}
\def\baligns#1\ealigns{\begin{align*}#1\end{align*}}
\def\balignat#1\ealign{\begin{alignat}#1\end{alignat}}
\def\balignats#1\ealigns{\begin{alignat*}#1\end{alignat*}}
\def\bitemize#1\eitemize{\begin{itemize}#1\end{itemize}}
\def\benumerate#1\eenumerate{\begin{enumerate}#1\end{enumerate}}

\newenvironment{talign*}
 {\csname align*\endcsname}
 {\endalign}
\newenvironment{talign}
 {\csname align\endcsname}
 {\endalign}

\def\balignst#1\ealignst{\begin{talign*}#1\end{talign*}}
\def\balignt#1\ealignt{\begin{talign}#1\end{talign}}



\let\originalleft\left
\let\originalright\right
\renewcommand{\left}{\mathopen{}\mathclose\bgroup\originalleft}
\renewcommand{\right}{\aftergroup\egroup\originalright}


\def\tinycitep*#1{{\tiny\citep*{#1}}}
\def\tinycitealt*#1{{\tiny\citealt*{#1}}}
\def\tinycite*#1{{\tiny\cite*{#1}}}
\def\smallcitep*#1{{\scriptsize\citep*{#1}}}
\def\smallcitealt*#1{{\scriptsize\citealt*{#1}}}
\def\smallcite*#1{{\scriptsize\cite*{#1}}}


\def\mbf#1{\mathbf{#1}}




\def\<{\left\langle} 
\def\>{\right\rangle}





\newcommand{\inner}[1]{{\left\langle #1 \right\rangle}} 










\DeclareSymbolFont{rsfs}{U}{rsfs}{m}{n}
\DeclareSymbolFontAlphabet{\mathscrsfs}{rsfs}




\newcommand{\KL}{\mathsf{KL}}



\providecommand{\tr}{\mathop\mathrm{tr}}



\ifdefined\nonewproofenvironments\else
\ifdefined\ispres\else
\newtheorem{theorem}{Theorem}
\newtheorem{lemma}[theorem]{Lemma}
\newtheorem{corollary}[theorem]{Corollary}

\renewenvironment{proof}{\noindent\textbf{Proof.}\hspace*{.3em}}{\qed \vspace{.1in}}
\newenvironment{proof-sketch}{\noindent\textbf{Proof Sketch}
  \hspace*{1em}}{\qed\bigskip\\}
\newenvironment{proof-idea}{\noindent\textbf{Proof Idea}
  \hspace*{1em}}{\qed\bigskip\\}
\newenvironment{proof-of-lemma}[1][{}]{\noindent\textbf{Proof of {#1}}
  \hspace*{1em}}{\qed\\}
  \newenvironment{proof-of-proposition}[1][{}]{\noindent\textbf{Proof of Proposition {#1}}
  \hspace*{1em}}{\qed\\}
\newenvironment{proof-of-theorem}[1][{}]{\noindent\textbf{Proof of Theorem {#1}}
  \hspace*{1em}}{\qed\\}
\newenvironment{proof-attempt}{\noindent\textbf{Proof Attempt}
  \hspace*{1em}}{\qed\bigskip\\}

\newtheorem*{remark*}{Remark}
\newenvironment{remark}{\noindent\textbf{Remark.}
  \hspace*{0em}}{\smallskip}

\fi

\newtheorem{assumption}{Assumption}
\theoremstyle{definition}
\newtheorem{definition}[theorem]{Definition}
\newtheorem{example}[theorem]{Example}
\fi
\makeatletter
\@addtoreset{equation}{section}
\makeatother

\hypersetup{
  colorlinks,
  linkcolor={red!50!black},
  citecolor={blue!50!black},
  urlcolor={blue!80!black}
}


\renewcommand{\Pr}[1]{\mathbb{P}\left( #1 \right)}



\newcommand{\Otilde}{\widetilde{O}}

\newcommand{\eia}{\mc E_1}
\newcommand{\eib}{\mc E_2}
\newcommand{\eic}{\mc E_3}

\newcommand{\TV}{\msf{TV}}

\definecolor{OliveGreen}{rgb}{0,0.6,0}
\addbibresource{ref.bib}

\usepackage{custom}
\usepackage[normalem]{ulem}

\makeatletter
\renewcommand{\paragraph}{%
  \@startsection{paragraph}{4}%
  {\z@}{1.25ex \@plus 1ex \@minus .2ex}{-1em}%
  {\normalfont\normalsize\bfseries}%
}
\makeatother

\begin{document}

\title{Analysis of Langevin midpoint methods using an anticipative Girsanov theorem}

 \author{
Matthew S.\ Zhang\thanks{
  Department of Computer Science at
  University of Toronto, and Vector Institute, \texttt{matthew.zhang@mail.utoronto.ca}
}
}

\maketitle

\begin{abstract}
    We introduce a new method for analyzing midpoint discretizations of stochastic differential equations (SDEs), which are frequently used in Markov chain Monte Carlo (MCMC) methods for sampling from a target measure $\pi \propto \exp(-V)$. Borrowing techniques from Malliavin calculus, we compute estimates for the Radon--Nikodym derivative for processes on $L^2([0, T); \R^d)$ which may \emph{anticipate} the Brownian motion, in the sense that they may not be adapted to the filtration at the same time. Applying these to various popular midpoint discretizations, we are able to improve the regularity and cross-regularity results in the literature on sampling methods. We also obtain a query complexity bound of $\Otilde(\frac{\kappa^{5/4} d^{1/4}}{\varepsilon^{1/2}})$ for obtaining a $\varepsilon^2$-accurate sample in $\KL$ divergence, under log-concavity and strong smoothness assumptions for $\nabla^2 V$.
\end{abstract}

\tableofcontents

\section{Introduction}

At their core, many algorithms for sampling over distributions in $\R^d$ boil down to approximating It\^o SDEs:
\[
    \D X_t = b(t, X_t) \, \D t + \sigma(t, X_t) \, \D B_t\,,
\]
where $(B_t)_{t \geq 0}$ is a Brownian process, and $b: \R_+ \times \R^d \to \R^d$, $\sigma: \R_+ \times \R^d \to \R^{d\times d}$ are suitably regular coefficients. 

A model example arises with the Langevin diffusion~\cite{smoluchowski1918try}, given by
\begin{align}\label{eq:langevin_diff}\tag{LD}
    \D X_t = -\nabla V(X_t) \, \D t + \sqrt{2} \, \D B_t\,,
\end{align}
for a continuously differentiable $V: \R^d \to \R$. \eqref{eq:langevin_diff} is the canonical algorithm for sampling from $\pi$.
A generalization~\cite{langevin1908theorie} of this diffusion, which incorporates a momentum component, is given by
\begin{align}\label{eq:underdamped_langevin_diff}\tag{ULD}
\begin{aligned}
    \D X_t &= P_t \, \D t \,, \\
    \D P_t &= \bigl\{-\gamma P_t -\nabla V(X_t)\bigr\}\, \D t + \sqrt{2\gamma} \, \D B_t\,, 
\end{aligned}
\end{align}
where $\gamma > 0$ is a friction coefficient. Under mild conditions, this converges to $\bs \pi(x,p) \propto \exp(-V(x) -\frac{\norm{p}^2}{2})$.

In this work, we consider the simulation of such equations. Generally, one hopes to quantify the number of queries made by the algorithm to $\nabla V$ or $V$ that are needed to guarantee that the $\KL(\operatorname{law}(X_T) \mmid \pi) \leq \varepsilon^2$, with particular focus on asymptotic dependence on quantities as the ambient dimension $d$ or the desired error $\varepsilon$. Exact simulation of the equation is generally impossible due to the non-linearity $\nabla V$ in the dynamics. Instead, one adopts methods for approximating the dynamics on small enough timescales. The simplest such method is the Euler--Maruyama discretization, which fixes a finite set of interpolant times $0 \leq t_1 \leq t_2 \leq \ldots \leq t_N$. On each of these intervals, we replace the nonlinearity $\nabla V(X_t)$ by $\nabla V(X_{t_k})$, which then gives a sequence of linear SDEs whose laws are easily simulated. 
Error estimates for such schemes have been extensively studied in both the SDE and sampling literature; within sampling, the monograph~\cite{chewisamplingbook} is an excellent reference. 

We can expect that a more refined scheme may outperform this simple interpolation. By analogy, to simulate an ordinary differential equation, the gradient estimator $\nabla V(X_{t_k})$ may be replaced with an expression which involves midpoints such as $\frac{1}{2} (X_{t_k} + X_{t_{k+1/2}})$, or other more complex schemes, which gives rise to a rich family of midpoint or Runge--Kutta schemes.

In this spirit, the work of~\cite{shen2019randomized} introduced the randomized midpoint scheme for simulating~\eqref{eq:ULD}, which introduced an additional random interpolant timestep in order to debias the gradient estimate. This was extended to~\eqref{eq:langevin_diff} in~\cite{he2020ergodicity, yu2023langevin}. To be precise, let us just look at this scheme on a single discretized step $[0,h]$, with $h > 0$ a step-size. Fix some interpolant time $\tau$, and write
\begin{align}\label{eq:rmp-std}
\begin{aligned}
    X^+ &= X_0 - \tau \nabla V(X_0) + \sqrt{2} B_\tau \\
    X_h &= X_0 - h \nabla V(X^+) + \sqrt{2} B_h\,.
\end{aligned}
\end{align}
After repeating this process $N$ times, $X_{Nh}$ is taken to be the approximate sample from $\pi$.
As shown in~\cite{shen2019randomized}, this scheme enjoys better error guarantees compared to the standard discretization, although the analysis in it and subsequent works~\cite{he2020ergodicity} are conducted solely in Wasserstein distance using coupling techniques. See also the works~\cite{kandasamy2024poisson, srinivasan2025poisson}.
Building on these approaches, the separate line of work~\cite{scr3, scr4} proposed a method which was able to strengthen these guarantees to bounds on the Kullback--Leibler ($\KL$) divergence between the terminal marginal $\operatorname{law}(X_T)$ and $\pi$. While these works have led to significant improvements in our understanding of these methods, they are unable to obtain process-level bounds between~\eqref{eq:langevin_diff} and~\eqref{eq:rmp-std} directly.

We resolve this question in the current work. We highlight an existing framework for Girsanov analysis of non-adapted processes. In general, consider some possibly anticipating transformation of the Brownian motion, $\msf T: C_0([0,T]; \R^d) \to C_0([0, T); \R^d)$ (where $C_0$ are the set of continuous paths $\omega: [0, T] \to \R^d$ with $\omega_0 = 0$),
\begin{align}\label{eq:transformation-generic}
    \msf T_t(\omega) = \omega_t + \int_0^t \bar \Mu_s (\omega)\, \D s\,,
\end{align}
for a process $(\bar \Mu_s)_{s \in [0,T]}$ in $\R^d$ which may anticipate the Brownian motion, in the sense that $\bar \Mu_s$ is not necessarily a measurable random variable with respect to the filtration $\ms F_s$ at time $s$, and $\omega = (\omega_t)_{t \in [0, T)} \in C_0([0,T]; \R^d)$ corresponds to a particular realization of the Brownian motion trajectory. Given a measure $\mathbf P$ over $\Omega \equiv C_0([0,T]; \R^d)$, one then produces $\mathbf Q = \msf T_{\#} \mathbf P$. It is reasonable to ask if $\mathbf Q \ll \mathbf P$, or to inquire about the magnitude of $\frac{\D \mathbf Q}{\D \mathbf P}$.

Although such processes $(\msf T_t(\omega))_{t \in [0, T)}$ lose many of the beneficial features of the It\^o calculus, it is indeed still possible~\cite{nualart2006malliavin} to quantify the Radon--Nikodym derivative between $\mathbf P$ and $\mathbf Q$ under sufficient regularity conditions of the transform $\msf T$, using techniques from Malliavin calculus. By closely investigating these results, it is possible to leverage them to obtain guarantees for such processes in information theoretic divergences such as $\msf{KL}$ or R\'enyi. 

\subsection{Related work}
\paragraph{Sampling and midpoint methods.}
The theory of sampling methods has seen extensive development in recent years. The continuous-time dynamics in~\eqref{eq:langevin_diff} and~\eqref{eq:underdamped_langevin_diff} have an elegant interpretation as gradient flows of $\KL(\cdot \mmid \pi)$~\cite{jordan1998variational, villani2008optimal, ambrosio2008gradient}. More relevant to us is the history of SDE discretizations, in particular ``unadjusted'' schemes which do not add a Metropolis--Hastings filter~\cite{hastings1970monte}. The Euler--Maruyama discretization was first introduced for the simulation of SDEs in~\cite{maruyama1955continuous}. Numerous refinements of this method have since been analyzed in the literature, which would be impossible to fully survey. See~\cite{kloeden2012numerical} for an introduction. 

Analysis within the sampling literature is concerned with guarantees for the law of the terminal iterate $X_T$, typically in a metric such as total variation or Wasserstein distance. In this context the Euler--Maruyama discretization of~\eqref{eq:langevin_diff} was first analyzed in~\cite{dalalyan2012sparse}, which showed the first non-asymptotic guarantees for this algorithm in $W_2^2$ distance. Numerous refinements have since been made, holding in ``metrics'' as strong as the $q$-R\'enyi divergence~\cite{dalalyan2017theoretical, durmus2019analysis, chewi2024analysis}. 

The randomized midpoint scheme was first introduced by~\cite{shen2019randomized}, and its analysis was refined by~\cite{he2020ergodicity, cao2020complexity, yu2023langevin}. Other Runge--Kutta type schemes have also been considered in the literature~\cite{li2019stochastic, wu2024stochastic}. We comment on~\cite{sanz2021wasserstein, paulin2024correction}, which introduces a similar midpoint method as~\eqref{eq:DM--ULMC-multistep} (defined in \S\ref{scn:dm-ulmc}) but uses only a singular midpoint. All of the aforementioned works only obtain guarantees in $W_2^2$, and do not provide techniques for obtaining guarantees in $\KL$ or $\Renyi_q$, as the resultant process will be non-adapted. While the analyses of~\cite{shen2019randomized, sanz2021wasserstein} were sufficient to achieve better complexity bounds for single midpoint discretizations of~\eqref{eq:ULD} in the $W_2^2$ metric, it is not possible at the moment to show similar guarantees in $\KL$ with only a single midpoint. Instead, we generalize their analysis to a scheme with two midpoints, which recovers their rates.
Finally, although not considered in this work, we also mention the existence of implicit discretizations of Langevin diffusions, which can also be seen as anticipating processes~\cite{wibisono2019proximal, salim2019stochastic, hodgkinson2021implicit}.

We also mention that there exists an army of methods for discretizing (randomized) Hamiltonian Monte Carlo dynamics~\cite{monmarche2021high, bou2023mixing, bou2023nonlinear, bou2025randomized}. These methods also provide guarantees in terms of closeness of the final iterate to $\bs \pi$, but cannot be seen as discretizations of~\eqref{eq:ULD} in the same way as the methods considered in this work.

\paragraph{Shifted composition.}
The parallel line of work~\cite{scr1, scr2, scr3, scr4} proposes a different method for analyzing midpoint schemes, known as the \textbf{shifted composition} rule. Unlike the present work, the shifted composition method is able to obtain sharper marginal guarantees by differentiating between the \emph{weak} and \emph{strong} errors of the algorithm. The former measures $\norm{\E X^\alg - \E X}$, while the latter measures $\norm{X^\alg - X}_{L^2_{\mbf P}}$, where $X^\alg, X$ are obtained from the algorithm and true process started at a single point. Secondly, the conditions in shifted composition are more user-friendly, as there is no need to compute the determinant of a Malliavin derivative, which introduces significant technical challenges in our work.

In light of this, we should justify why anticipating Girsanov theorems are necessary at all. Firstly, the $\KL$ bounds that we obtain are genuinely process-valued, which allows us to obtain stronger guarantees if one is interested in functionals of the entire stochastic process (for instance, auto-correlations). Secondly, the key assumption needed in~\cite{scr3, scr4} is \emph{cross-regularity}, which measures $\KL$ (or $\Renyi_q$) between the laws of $X^\alg$ and $Y$, where $X^\alg$ is obtained from an algorithm starting at $x$ and $Y$ is obtained from the reference process started at $y$. This requires a bound on $\KL(\operatorname{law}(X^\alg) \mmid \operatorname{law}(Y))$ in terms of $\norm{x-y}^2$ and other quantities. The shifted composition framework is unable to provide tools for directly analyzing this quantity, and the bounds obtained thus far have been largely on a case-by-case basis. On the other hand, the anticipating Girsanov theorem presented in this work can be used to obtain such bounds directly, and thus provides a complementary contribution to the literature. 

\paragraph{Anticipating stochastic calculus.} We would be remiss not to touch upon the history of anticipating stochastic calculus. The study of non-adapted processes has nearly as long a history as the standard It\^o calculus, and the same techniques used to define the anticipating Girsanov theorem were also used to place the Stratonovich calculus on a rigorous mathematical foundation~\cite{ocone1989generalized}. The differential and integral formulations of Malliavin calculus were developed independently by Malliavin~\cite{malliavin1978stochastic} and Skorohod~\cite{skorohod1976generalization} respectively. The application to non-adapted processes was developed in~\cite{nualart1988stochastic, buckdahn1991linear, obata2006white}, among many other works. See~\cite{pardoux2006applications} or the bibliography in~\cite{nualart2006malliavin} for a more comprehensive survey.

The original, non-anticipating Girsanov theorem is from~\cite{girsanov1960transformation}. Its anticipating variant is due to~\cite{buckdahn1994anticipative}, based on other generalizations of Girsanov's theorem~\cite{ramer1974nonlinear, kusuoka1974non}. It has been further generalized to other settings~\cite{ustunel1992transformation, enchev1993anticipative, ustunel2013transformation}, and has also seen applications in a large body of recent scholarship~\cite{kuo2013generalization, kuo2013ito, hwang2019anticipating}.

\subsection{Approach}

We briefly summarize the technical approach which leads us to our bounds. Broadly speaking, if we approximate our process $\bar \Mu_s$ by an elementary process $\sum_{k=0}^{T/\eta - 1} f_k((B_{[j\eta, (j+1)\eta)})_{j = 0, 1, \dotsc, T/\eta-1}) \one_{[k\eta, (k+1)\eta)}$ (which is constant on intervals $[k\eta, (k+1)\eta)$ for $\eta$ sufficiently small, and depends on $B_{[0, T)}$ only through the finite collection of increments $(B_{[k\eta, (k+1)\eta)})_{k \in [T/\eta]}$), then it becomes simple to write a change of measure so long as~\eqref{eq:transformation-generic} is invertible. The techniques of Malliavin calculus allow us to discuss the validity of this limit as we improve the granularity of the approximation. Given $\mathbf Q, \mathbf P$ which correspond to the laws of the ``paths'' $X_{[0, T)}$ following~\eqref{eq:langevin_diff} and a suitable interpolation of~\eqref{eq:rmp-std}, we can use this result to obtain bounds on the $\KL(\mathbf P \mmid \mathbf Q)$, or $\Renyi_q$ as desired.

The key to this computation lies in bounding the determinant of the additional quantity $\MD_s \Mu_t$, which for elementary processes is simply the matrix of derivatives of $f_k$ with respect to the $j$-th Brownian increment. To do this, we take the following procedure: (1) approximate all continuous processes with a suitably fine, piecewise constant interpolation; (2) expand the log-determinant in terms of traces of matrix powers using Lemma~\ref{lem:log-det}, resolving only the lowest-order terms. These contribute negligible factors to our bounds. We make these notions precise in the following sections.

In isolation, these Girsanov bounds can lead to some guarantees, by combining with the weak triangle inequality (Lemma~\ref{lem:renyi-triangle}) and the convergence of~\eqref{eq:langevin_diff} or~\eqref{eq:underdamped_langevin_diff} in R\'enyi divergence (\S\ref{scn:cvg}). However, our bounds work best when complimenting the results in~\cite{scr3, scr4}, which can witness the weak error of the various numeric schemes but requires an additional \emph{cross-regularity} ingredient which cannot be proved using their framework alone. We go into more detail in the subsequent section.

\subsection{Contributions}
We establish direct $\KL$ and $\Renyi_q$ bounds between various midpoint interpolations. We summarize our primary contributions as follows.
\begin{itemize}
    \item We state a variant of Girsanov's theorem for anticipating processes (Theorem~\ref{thm:anticipating-girsanov}), and show how the quantities within this theorem can be effectively computed for discretizations of interest. While this result is not new, it has not thus far been effectively utilized within the sampling community, and we hope that our exposition will serve as an interpretable blueprint for future works aiming to analyze similar methods.

    \item We introduce a deterministic double midpoint method for simulating~\eqref{eq:ULD} called~\eqref{eq:DM--ULMC-multistep}, new to this work. In our analysis, we show that under a strong Hessian smoothness assumption (Assumption~\ref{as:chaos-tail}), we can obtain $\KL(\bs \mu_{Nh} \mmid \bs \pi) \leq \varepsilon^2$ for some computationally tractable measure $\bs \mu_{Nh}$, with no more than $\widetilde{O}\Bigl(\frac{d^{1/4}}{\varepsilon^{1/2}}\Bigr)$ queries. See Theorem~\ref{thm:dmd-final} for a more precise statement. This is the first such result in $\KL$ divergence.

    \item We apply Theorem~\ref{thm:anticipating-girsanov} for several midpoint methods used in sampling. Namely, we compute the Radon--Nikodym derivative between $\mathbf P, \mathbf Q$ for the single midpoint method for the overdamped Langevin~\eqref{eq:M-LMC}, as well as the double midpoint method for the underdamped Langevin~\eqref{eq:DM--ULMC-multistep}. These can be used to derive cross-regularity for these schemes, as we discuss in \S\ref{scn:discussion}. 
    
    In~\cite[Lemma 6.2]{scr3}, cross-regularity was proven for~\eqref{eq:M-LMC} in an ad-hoc way, leading to an extra factor of $\log \frac{1}{\beta h}$. On the other hand,~\cite{scr4} was unable to prove sufficiently strong cross-regularity bounds for~\eqref{eq:DM--ULMC-multistep}, and so could only state guarantees if the exponential Euler discretization was used in the final step. We dispense with both of these caveats in this work through Corollary~\ref{cor:ormd-error} and Lemma~\ref{lem:renyi-dmd-coarse} respectively. We also note that our bounds hold in $q$-R\'enyi divergence.
    
    \item For~\eqref{eq:DM--ULMC-multistep}, the approaches used in~\cite{scr3, scr4} are insufficient for a $\Otilde(d^{1/4})$ query complexity bound. We emphasize that the present work is neither derivative of nor detracts from the results in~\cite{scr4}. Instead, we see our approach as complementary; we can provide genuine $\KL$ bounds on the process laws, which can be combined with the weak error through the framework of~\cite{scr4}. \textbf{The combination of both approaches} is necessary to obtain our final guarantees.
\end{itemize}

\subsection{Organization}

In \S\ref{scn:prelim}, we provide some general preliminary notions. In \S\ref{scn:malliavin}, we analyze a simple algorithm~\eqref{eq:m-lmc}, and provide an introductory treatment of Malliavin calculus. We also state the anticipating Girsanov theorem in Theorem~\ref{thm:anticipating-girsanov}, which is the primary tool of this paper. \S\ref{scn:applications} then provides the remaining detailed analysis of several common midpoint algorithms, deriving bounds in $\KL$ and $q$-R\'enyi divergences between $\mathbf P$, $\mathbf Q$. Finally, \S\ref{scn:discussion} harmonizes these approaches with the local error framework~\cite{scr3, scr4}, and shows how the novel $\Otilde(d^{1/4} \varepsilon^{-1/2})$ query complexity guarantee can be obtained under a strong smoothness assumption.

\section{Preliminaries}\label{scn:prelim}
\paragraph{Notation.}
In the sequel, we will consider the triple $(\Omega, \ms F, \mathbf P)$, induced by a Wiener measure $\mathbf P$ (most naturally thought of as the measure which induces the ``path law'' of Brownian motions). As a result, we will let $L^2_\mathbf P(\R^d)$ be the set of functions $f:\Omega \to \R^d$ such that $\E_{\mathbf P}\norm{f}^2 < \infty$. As a special case, $L^2_{\mathbf P}(\R)$ corresponds to real-valued functions $f: \Omega \to \R$ such that $\E_{\mathbf P} \abs{f}^2 < \infty$.

Likewise, we will use $L^2([0, T); \R^d)$ to denote functions $\Mh: [0, T) \to \R^d$ such that $\int_0^T \norm{\Mh(t)}^2 \, \D t < \infty$. $(B_t)_{t \in [0, T)}$ will be the standard $\mathbf P$-Brownian motion on $\R^d$; where necessary, we will denote $(\ms F_t)_{t \in [0, T)}$ as its corresponding filtration. Almost surely, $(B_t)_{t \in [0, T)}$ takes values on $C_0([0,T]; \R^d)$, which denotes the set of random processes $(\Mu_t)_{t \in [0, T)}: [0, T) \to \R^d$ with $\Mu_0 = 0$ almost surely.
$L^2_{\mathbf P}([0, T); \R^d)$ will denote random processes $(\Mu_t)_{t \in [0, T)}$ with $\E[\norm{\Mu}_{\Hb}^2] < \infty$. A fortiori, $\Mu_t \in L^2_{\mathbf P}(\R^d)$ for almost every $t \in [0, T)$.

We say that a process $(\Mu_t)_{t \in [0, T)}$ is $\mc F$-adapted if $\Mu_t$ is measurable with respect to $\ms F_t$ (the filtration corresponding to the $\mathbf P$-Brownian motion) for all $t \in [0, T)$. 
For some simple examples of processes which are not adapted, one can consider the constant-in-time functions $t \mapsto B_\tau$ for $\tau \in (0, T]$. 

If a matrix $A \in \R^{(d_1 + d_2)\times (d_1 + d_2)}$ can be partitioned into blocks $\R^{d_1 \times d_1}, \R^{d_1 \times d_2}, \R^{d_2 \times d_1}, \R^{d_2 \times d_2}$, we refer to these as $A_{[1,1]}$, $A_{[1,2]}$, etc.\ respectively. This is not to be confused with the notation $A_{i,j}$, which indicates the $(i,j)$-th entry of $A$. For instance, $(A_{[2,2]})_{i,j} = A_{d_1+i, d_1+j}$. We use the notation $\lesssim, \gtrsim , \asymp$ to denote bounds up to an implied absolute constant, and these are equivalent to $O(\cdot), \Omega(\cdot), \Theta(\cdot)$. $a = \Otilde(b)$ denotes $a = O(b \operatorname{polylog} b)$, and likewise for $\widetilde \Theta, \widetilde \Omega$.

\paragraph{General notions}

To measure closeness between measures on arbitrary domains, we introduce the following information divergences and metrics between probability measures.
\begin{definition}
    Let $\mu \ll \nu$ be two distributions on $\R^d$. Then, we can define the Kullback--Leibler ($\KL$) divergence between then as
    \begin{align*}
        \KL(\mu \mmid \nu) = \E_\mu \log \frac{\D \mu}{\D \nu}\,.
    \end{align*}
    Likewise, we can define the $q$-R\'enyi divergence for $q > 1$ as
    \begin{align*}
        \Renyi_q(\mu \mmid \nu) = \frac{1}{q-1} \log \int \Big\lvert\frac{\D \mu}{\D \nu}\Big\rvert^q \, \D \nu\,, 
    \end{align*}
    where these definitions are to be interpreted as equalling $+\infty$ when $\mu \not\ll \nu$. 
    The above generalizes to two measures on an infinite dimensional space such as $\Omega$, if we consider $\frac{\D \mu}{\D \nu}$ as the Radon--Nikodym derivative of $\mu$ with respect to $\nu$. We note that the $q$-R\'enyi divergence is monotonically increasing in $q$ and recovers the $\KL$-divergence when $q \to 1$. $\KL$ divergence is also sufficient to upper bound the squared total variation distance via Pinsker's inequality, with
    \begin{align*}
        \KL(\mu \mmid \nu) \geq 2 \norm{\mu - \nu}_{\TV}^2 \deq 2\sup_{B \in \ms F} \abs{\mu(B) - \nu(B)}^2\,.
    \end{align*}
    Finally, we introduce the Wasserstein distance as, for $\Gamma(\mu, \nu) \deq \{\rho \in \mc P(\R^{2d}): \int_{\R^d} \rho(x, \D y) = \mu(x), \int_{\R^d} \rho(\D x, y) = \nu(y)\}$ the set of couplings with marginals $\mu, \nu$ respectively,
    \begin{align*}
        W_2^2(\mu, \nu) \deq \inf_{\rho \in \Gamma(\mu, \nu)} \E_{(x,y) \sim \rho}\Bigl[\norm{x-y}^2 \Bigr]\,.
    \end{align*}
\end{definition}

It is known that exponential convergence of~\eqref{eq:langevin_diff} in the divergences mentioned above can be obtained if we assume that the target measure satisfies a relevant functional inequality.
\begin{definition}[Log-Sobolev inequality]\label{def:lsi}
    A measure $\pi$ on $\R^d$ satisfies a log-Sobolev inequality with parameter $1/\alpha$ if for all smooth and compactly supported functions $f: \R^d \to \R_+$, we have
    \begin{align}\label{eq:lsi}\tag{LSI}
        \ent_\pi f \deq \E_\pi \Bigl[f \log\frac{f}{\E_\pi f}\Bigr] \leq \frac{1}{2\alpha} \E_\pi [\norm{\nabla f}^2]\,.
    \end{align}
\end{definition}
Notationally, for~\eqref{eq:underdamped_langevin_diff}, consider $\bs \pi(x,p) \propto \exp(-V(x) - \frac{\norm{p}^2}{2})$ and an initial measure $\bs \mu_0 \in \mc P(\R^{2d})$. We will consider the following triplet of processes: $(\bs \mu_t, \tilde{\bs \mu}_t, \bs \pi)_{t \in [0, T)}$, which correspond respectively to the evolutions of $\bs \mu_0$ along the algorithm, $\bs \mu_0$ along~\eqref{eq:underdamped_langevin_diff}, and $\bs \pi$ under~\eqref{eq:underdamped_langevin_diff} (noting that $\bs \pi$ is stationary). Similarly, we let $(\mu_t, \tilde \mu_t, \pi)$ be the analogous versions along~\eqref{eq:langevin_diff}.

\section{Malliavin calculus}\label{scn:malliavin}
\subsection{Warm-up: overdamped midpoint discretization}\label{scn:warmup-m-lmc}
We now proceed with a brief warm-up. Before going into details, we state an important assumption which will be used throughout our work.
\begin{assumption}[Regularity]\label{as:regularity}
    Let $V: \R^d \to \R$ be twice continuously differentiable with $-\beta I_d \preceq \nabla^2 V \preceq \beta I_d$. Moreover, let $\nabla^2 V$ be uniformly continuous.
\end{assumption}
\begin{remark}
The uniform continuity assumption is only needed for convergence of the certain approximation schemes when obtaining expressions for the Radon--Nikodym, and does not feature quantitatively in any of our results on $\KL$ or $\Renyi_q$. As a result, it can be effectively ignored using the standard technique of weakly convergent approximation combined with lower semi-continuity of information divergences.
\end{remark}

We consider the overdamped midpoint discretization, which was introduced at the start of this paper. Here, we do not consider a random interpolant point; instead, suppose that the choice of interpolation times $\tau$ is fixed and inhabiting $[0, h)$. 
\begin{align*}
    X_t = X_0 - \int_0^t \nabla V(X_s) \, \D s + \sqrt{2}\tilde B_t\,,
\end{align*}
and
\begin{align}\label{eq:m-lmc}\tag{M--LMC}
    X_t = X_0 - \int_0^t \nabla V(X^+) \, \D s + \sqrt{2} B_t\,,
\end{align}
where $(B_t)_{t\in [0, h)}, (\tilde B_t)_{t \in [0, h)}$ are $\mathbf P, \mathbf Q$ Brownian motions respectively, and we define the interpolant
\begin{align*}
    X^+ = X_0 - \tau \nabla V(X_0) + \sqrt{2} B_\tau\,.
\end{align*}
The reason behind this formalism is that given $X_0$, we can essentially think of $X_t$ as deterministic functions of either $(B_t)_{t \in [0, h)}$ or $(\tilde B_t)_{t \in [0, h)}$, and if $(B_t)_{t \in [0, h)}$ has the distribution of a Brownian motion, then $X_h$ will now exactly have the marginal law of a single iteration of the midpoint discretization (see~\cite{shen2019randomized}). On the other hand, if $(\tilde B_t)_{t \in [0, h)}$ has a Brownian law, then $X_h$ will instead be the marginal distribution of~\eqref{eq:langevin_diff}. As a result, if we can compute the Radon--Nikodym and $\KL$ divergence of the laws of $(B_t)_{t \in [0, h)}, (\tilde B_t)_{t \in [0, h)}$, then we can immediately upper bound the $\KL$ divergence between the marginals of the two sampling processes.

\paragraph{Part (A): Discretization.}
To formulate this change of measure principle, it is instructive to first introduce a coarse discretization. Consider two sequences of standard normal random variables $\bs \xi = [\xi_1, \dotsc, \xi_{h/\eta}]$, $\tilde{\bs \xi} = [\tilde \xi_1, \dotsc, \tilde \xi_{h/\eta}]$ for $\eta \leq h$ evenly dividing $h$, each inhabiting $\R^d$, with $\sqrt{\eta} \xi_i$, $\sqrt{\eta} \tilde \xi_i$ representing the $i$-th Brownian increment $B_{i\eta} - B_{(i-1)\eta}$ and $\tilde B_{i \eta} - \tilde B_{(i-1)\eta}$ respectively. Now, define the sequence of marginals by, for $k \in [h/\eta]$
\begin{align*}
    \hat X_{k\eta}^\eta &= \hat X_{(k-1)\eta}^\eta - \eta \nabla V(\hat X_{(k-1)\eta}^\eta) + \sqrt{2\eta} \tilde \xi_k\,, \\
    \hat X_{k\eta}^\eta &= \hat X_{(k-1)\eta}^\eta - \eta \nabla V(X^+) + \sqrt{2\eta} \xi_k\,,
\end{align*}
and as $\eta$ will be fixed for the moment, we omit it from the superscript. The definition of $X^+$ does not change so long as $\eta$ also subdivides $\tau$, which we can assume without much loss of generality, and is given by
\begin{align*}
    X^+ = X_0 - \tau \nabla V(X_0) + \sqrt{2 \eta} \sum_{i=1}^{\tau/\eta} \xi_k\,.
\end{align*}
We call the processes above ``elementary'' as they depend only on a finite number of Brownian increments. Now, consider the sequence
\begin{align}\label{eq:transform-m-lmc}
    \tilde \xi_i = \xi_i + \sqrt{\frac{\eta}{2}} \Bigl(\nabla V(\hat X_{(i-1)\eta}) - \nabla V(X^+) \Bigr)\,,
\end{align}
and define $\msf T(\xi_1, \dotsc, \xi_{h/\eta}) = (\tilde \xi_1, \dotsc, \tilde \xi_{h/\eta})$. 
Then, (noting that both $X^+$, $\hat X^\eta_{k\eta}$ are explicit functions of $(\xi_i)_{i \in [h/\eta]}$), this gives a deterministic function which defines each $\tilde \xi_i$ in terms of the entire sequence $(\xi_i)_{i \in [h/\eta]}$.

Now, let us compute the Radon--Nikodym between the two sequences $(\xi_i)_{i \in [h/\eta]}$, $(\tilde \xi_i)_{i \in [h/\eta]}$. Define the functions
\begin{align}\label{eq:psi-ld}
    \Mu_i(\xi_1, \dotsc, \xi_{h/\eta}) =  \sqrt{\frac{\eta}{2}} \Bigl(\nabla V(\hat X_{(i-1)\eta}) - \nabla V(X^+) \Bigr)\,.
\end{align}
Consider the ``matrix'' $\MD \Mu$ which has entries $(\MD \Mu)_{i,j} = \partial_{\xi_j} \Mu_i$. To disambiguate the gradient in $\xi$ from the standard gradient e.g., in $\nabla V$, we write $\MD$ to denote the former. Note that each entry of $\MD \Mu$ is really matrix-valued, since both $\xi_j$ and $\Mu_i$ are vectors in $\R^d$, so that $\MD \Mu \in \R^{d \times d \times (h/\eta) \times (h/\eta)}$; however, we will typically view it as a matrix of size $(h/\eta) \times (h/\eta)$ (temporal indices) with entries $d \times d$ (spatial indices). 

\paragraph{Change of measure.} 
Now, define the random factor
\begin{align*}
    \mbf M \deq \abs{\det(I + \MD \Mu)} \exp\Bigl(-\sum_{i=1}^{h/\eta} \inner{\Mu_i, \xi_i} - \frac{1}{2} \sum_{i=1}^{h/\eta} \nnorm{\Mu_i}^2 \Bigr)\,,
\end{align*}
omitting the dependence of $\Mu_i$ on $\bs \xi$. Here, let $\nnorm{\Mu}^2$ be the $2$-norm (Hilbert--Schmidt norm) on $\R^{d \times (h/\eta)}$. As shall be seen later, it is instead more convenient to consider the quantities:
\begin{align*}
    \mbf M \deq \Bigl\{\abs{\det(I_d + \MD \Mu)} \exp(-\tr \MD \Mu)\Bigr\} \exp\Bigl(-\bigl(\sum_{i=1}^{h/\eta} \inner{\Mu_i, \xi_i} - \tr \MD \Mu\bigr) - \frac{1}{2} \sum_{i=1}^{h/\eta} \nnorm{\Mu_i}^2 \Bigr)\,,
\end{align*}
Here, the trace is both temporal and spatial, i.e., $\tr \MD \Mu = \sum_{i=1}^{h/\eta} \tr((\MD \Mu)_{i,i})$. $\sum_{i \in [h/\eta]}\inner{\Mu_i, \xi_i} - \tr \MD \Mu$ has the pleasing interpretation that it is mean zero under $\mathbf P$, via a Gaussian integration by parts formula. Now, consider a non-negative random variable of the form $Z = f(\bs \xi)$, and $f \in C_b^\infty(\R^{h/\eta}; \R_+)$. We have another random variable
\begin{align}\label{eq:transform-finite}
    Z \circ \msf T = f(\xi_1 + \Mu_1,\, \dotsc,\, \xi_{h/\eta} + \Mu_{h/\eta})\,.
\end{align}
Now we have
\begin{align*}
    \E_{\mathbf P}\Bigl[(Z \circ \msf T) \cdot \mathbf M \Bigr] &= \E_{\mathbf P}\biggl[\int_{\R^{h/\eta}} \abs{\det(I_d + \MD \Mu(\mathbf x))} \exp\Bigl(-\frac{1}{2}\sum_{j=1}^{h/\eta} (x_j + \Mu_j(\mathbf x))^2 \Bigr) \\
    &\qquad\quad \cdot f\Bigl(x_1 + \Mu_1(\mathbf x), \dotsc, x_{h/\eta} + \Mu_1(\mathbf x) \Bigr) \cdot (2\pi)^{-\frac{h/\eta}{2}} \, \D \mathbf x\biggr]\,,
\end{align*}
with $\mathbf x = (x_1, \dotsc, x_{h/\eta})$. Here, the extra factor of $-\frac{1}{2} \sum_{j=1}^{h/\eta} x_j^2$ in the exponent arises from $\bs \xi$ being Gaussian under $\mathbf P$. Applying the change-of-variables formula and as $\msf T$ is one-to-one,
\begin{align*}
    \E_{\mathbf P}\Bigl[(Z \circ \msf T) \mbf M \Bigr] &= \E_{\mathbf P}\biggl[\int_{\operatorname{Im}_{\operatorname{id} + \Mu}} \exp\Bigl(-\frac{1}{2} \norm{\mathbf y}^2 \Bigr) f(y_1, \dotsc, y_{h/\eta}) \cdot (2\pi)^{-\frac{h/\eta}{2}} \, \D \mathbf y\biggr]\,,
\end{align*}
Here, $\operatorname{Im}(\operatorname{id} + \Mu)$ is the image of $\R^{h/\eta}$ under the mapping $\mathbf x \mapsto \mathbf x + \Mu(\mathbf x)$. Replacing this with the integral over the whole space gives
\begin{align}\label{eq:change-of-measure-guaranteed}
     \E_{\mathbf P}\Bigl[(Z \circ \msf T) \cdot \mbf M \Bigr] \leq \E_{\mathbf P}[Z]\,,
\end{align}
with equality when $\operatorname{id} + \Mu$ is onto, e.g.\ when $\msf T$ is invertible (rather than merely locally invertible).

\paragraph{Part (B): Computation of relevant quantities.} We note that $\MD \nabla V(X^+)$ is
\begin{align*}
    \MD_i \{\nabla V(X^+)\} = \nabla^2 V(X^+) \cdot \sqrt{2\eta} \one_{i \leq \tau/\eta}\,. 
\end{align*}
On the other hand,
\begin{align}\label{eq:sde-mall-deriv}
\begin{aligned}
    \MD_i \nabla V(\hat X_{k\eta}) &= \nabla^2 V(\hat X_{k\eta}) \cdot \MD_i \Bigl(X_0 - k\eta \nabla V(X^+) + \sqrt{2\eta} \sum_{j=1}^k \xi_k \Bigr) \\
    &= \nabla^2 V(\hat X_{k\eta}) \cdot \Bigl(k\eta \nabla^2 V(X^+) \cdot \sqrt{2\eta} \one_{i \leq \tau/\eta} + \sqrt{2\eta} I_d \one_{i \leq k}\Bigr)\,,
\end{aligned}
\end{align}
using the formula for $\MD_i (\nabla V(X^+))$. Now, we can compute the determinant as needed. For later convenience, we will consider the determinant involving $q \Mu$ for some $q > 0$. The coefficients can be written in the following form:
\begin{align*}
    I + q\MD \Mu = \begin{bmatrix}
        I + (q \MD \Mu)_{[1,1]} & 0 \\
        (q \MD \Mu)_{[2,1]} & I + (q \MD \Mu)_{[2,2]}
    \end{bmatrix}\,,
\end{align*}
where the block $(q \MD \Mu)_{[1,1]}$ has size $(\tau/\eta) \times (\tau/\eta)$ (with each entry a block matrix in $\R^{d \times d}$, so that $\MD \Mu$ is really a $4$-tensor), and $I$ is an identity matrix of appropriate size.  As $(q \MD \Mu)_{[2,2]}$ involves only the components that are strictly below the diagonal, we have $\det(I+(q \MD \Mu)_{[2,2]}) = 1$. We can also neglect $(q\MD \Mu)_{[2,1]}$ as the matrix is block upper triangular. It remains to consider the determinant of the top left.

By abuse of notation, all our subsequent calculations will be \textbf{restricted to coordinates in this block}. We will continue to use the same notation for our matrices, noting that these are really the restrictions of the matrices to the $[1, 1]$ block. Now, we make some simplifications. First, we have the expansion
\begin{align*}
     \norm{\sqrt{\eta}\, \MD (\nabla V(\hat X))} \lesssim \norm{\msf A} + O(\beta^2 h^2)\,, \qquad \msf A_{i,j} = \sqrt{2} \eta \nabla^2 V(\hat X_{(j-1)\eta}) \one_{i \leq j-1}\,.
\end{align*}
The reason for the offset $j-1$ in the index arises from our definition of $\Mu_i$ in~\eqref{eq:psi-ld}.
We note that the terms arising from differentiating $\nabla V(X^+)$ within $\nabla V(\hat X)$ are of higher order and so can be ignored, which can be seen from~\eqref{eq:sde-mall-deriv} (multiplying both sides by $\sqrt{\eta}$), wherein the first term has eigenvalues of order $O(\beta^2 h^2)$, whereas the second gives rise to $\msf A$ and has eigenvalues of order $O(\beta h)$. Define $\bbv = [1, \dotsc, 1] \in \R^{h/\eta}$. Appealing to Lemma~\ref{lem:log-det}, we have for $h \lesssim \frac{1}{\beta q}$,
\begin{align*}
    &\log\abs{\det (I + q \MD \Mu)} - \tr\bigl(q\MD \Mu)\bigr)  \\
    &\qquad = - \frac{q^2}{2} \tr \Bigl((\MD \{\nabla V(X^+)\} \bbv^\top - \msf A )^2 \Bigr) + O(\beta^3 d h^3 q^3) \\
    &\qquad = -\frac{q^2}{2}\tr\Bigl(\msf A^2 - 2 \MD \{\nabla V(X^+)\} \bbv^\top  \cdot \msf A + (\MD \{\nabla V(X^+) \bbv^\top)^2\Bigr) + O(\beta^3 dh^3 q^3)\,.
\end{align*}

\paragraph{Validity of the discretization.} Finally, we can verify the invertibility from our Hessian computations involving $\msf T$. We can see that $q \MD \Mu$ has operator norm bounded by $O(\beta h q)$. So long as $h \lesssim \frac{1}{\beta q}$ for a sufficiently small implied constant, then invertibility follows if we can show properness of the transformation $\msf T$. Properness of $\msf T: \bs \xi \mapsto \bs \xi + q\Mu$ follows as, from~\eqref{eq:transform-m-lmc} and Lipschitzness of $\nabla V$, 
\begin{align}\label{eq:aligned}
\begin{aligned}
    \nnorm{\tilde{\bs \xi}}^2 &\geq \nnorm{\bs \xi}^2 - q \cdot  O\Bigl(\eta \nnorm{\nabla V(\hat X)}^2 + h\norm{\nabla V(X^+)}^2 \Bigr) \\
    &\geq \nnorm{\bs \xi}^2 -q^2 \cdot  O\Bigl(h \norm{\nabla V(X_0)}^2 +(h+\beta^2 h^3)\norm{\nabla V(X^+)}^2 + \beta^2 h^2\nnorm{\bs \xi}^2\Bigr)\\
    &\geq \nnorm{\bs \xi}^2 - q^2 \cdot O\Bigl((h+\beta^2 h^3 + \beta^4 h^5) \norm{\nabla V(X_0)}^2 + (\beta^2 h^2 + \beta^4 h^4)\nnorm{\bs \xi}^2\Bigr)\,.
\end{aligned}
\end{align}
and thus taking $\nnorm{\bs \xi} \to \infty$, we can ensure that $\nnorm{\tilde{\bs \xi}} \to \infty$ for $h \lesssim \frac{1}{\beta q}$. This implies~\eqref{eq:change-of-measure-guaranteed} is indeed an equality, and the factor $\mathbf M$ is a true Radon--Nikodym derivative.

\subsection{Details of Malliavin calculus}

It turns out that the computations of the preceding section can be considerably generalized, as the relevant quantities are stable in the limit as we take $\eta \to 0$. As we consider now arbitrary endpoints, we consider processes defined on $[0, T)$.\footnote{We exclude the right endpoint of $[0, T)$ for simplicity, although it can be included via a slight (but tedious) modification of the notation.}

In particular, we introduce the following definitions. Use the familiar shorthand $\xi_i^\eta = (B_{i\eta} - B_{(i-1)\eta})$, $\bs \xi^\eta = [\xi_1^\eta, \dotsc, \xi_{T/\eta}^\eta]$. Now, we define the Malliavin derivative of functions of $\bs \xi^\eta$.
\begin{definition}[Malliavin derivative]\label{defn:malliavin}
    The \emph{Malliavin derivatives} of functions of $\bs \xi^\eta \in \R^{d \times T/\eta}$, $\mbf F: \R^{d \times T/\eta} \to \R^d$ can be defined component-wise as
    \begin{align*}
         \MD_i \mbf F = \partial_i \mbf F(\bs \xi^\eta)\,.
    \end{align*}
\end{definition}

\begin{remark}
    Intuitively, functions $\mbf F$ can be considered a function of the entire Brownian path. Due to technical subtleties, we would prefer that these functions depend only on a finite number of nice statistics relating to the path; for instance, on a finite number of It\^o integrals against nice functions. Then, the Malliavin derivative gives a way of differentiating the function $\mbf F$ with respect to the underlying Brownian motion, such that $\mbf F$ satisfies nice continuity properties with respect to the topology of $\Omega$ (the product topology). The monographs~\cite{nualart2006malliavin, eldredge2016analysis, hairer2021introduction} contain more detailed treatments of these issues. Furthermore, the Malliavin derivative is typically not defined as a linear form, but rather as a process on $L^2_{\mathbf P}([0, T]; \R^d)$. This definition is consistent with the definition above up to some factors of $\eta$; while our definition bears greater resemblance to elementary calculus, it is somewhat less elegant when taking the limit $\eta \searrow 0$.
\end{remark}

\begin{definition}[Skorohod adjoint operator]\label{defn:skorohod}
We denote by $\Mdel$ the Skorohod adjoint operator to the Malliavin derivative. For any function $\mbf F: \R^{d \times T/\eta} \to \R^d$ and $\Mu \in L^2_{\mbf P}([0, T]; \R^d)$ a random process which is piecewise constant on each of the intervals $[(i-1)\eta, i\eta)$, $i \in [T/\eta]$, we have
\begin{align*}
    \E_{\mathbf P}[\mbf F \Mdel \Mu] = \E_{\mathbf P}\Bigl[\sum_{i=1}^{T/\tau} \inner{\MD_i \mbf F, \Mu_i} \Bigr]\,,
\end{align*}
and one can verify that this gives
\begin{align*}
    \Mdel \Mu = \sum_{i=1}^{T/\eta} \inner{\Mu_i, \xi_i} - \tr \MD \Mu\,.
\end{align*}
\end{definition}
\begin{remark}
This is sometimes referred to as the ``Skorohod integral'' of the element $\Mu$, since (i) it agrees with the It\^o integral when $\Mu$ is adapted to the filtration of the standard Brownian motion, (ii) aesthetically, it has the effect of ``integrating the Malliavin derivative by parts''. However, it is not clear how much this represents a bona fide stochastic integral in the non-adapted case.
\end{remark}

The following definition will be useful for defining determinants of kernels over general spaces, which appears in the anticipating Girsanov theorem in a vital way. 
\begin{definition}[Carleman--Fredholm determinant]
    For finite kernels $K = \R^{d \times d \times (T/\eta) \times (T/\eta)}$, the \emph{regularized Carleman--Fredholm determinant} is defined as
    \begin{align*}
        \Mdet(I + K) \deq \det(I+K) \exp(-\tr K)\,.
    \end{align*}
\end{definition}

Now, we can accurately state the anticipating Girsanov theorem. In the formulation of~\eqref{eq:transformation-generic}, we have $\msf T: B_{[0, T]} \mapsto B_{[0, T]} + (\int_0^\cdot \bar \Mu_s \, \D s)_{[0, T]}$. If we write in terms of standard Gaussians, and suppose that $\bar \Mu$ is piecewise constant over each interval the intervals $[(k-1)\eta, k\eta)$, $k \in [T/\eta]$, then this coincides with our definition in~\eqref{eq:transform-finite} if, for $t \in [(k-1)\eta, k\eta)$,
\begin{align}\label{eq:interpolant-drift}
    \bar \Mu_{t}^\eta = \frac{1}{\sqrt{\eta}} \Mu_k^\eta\,.
\end{align}
Then, we write, using the same notation 
\begin{align*}
    \log \Mdet(I + \MD \bar \Mu^\eta) \deq \log \Mdet(I + \MD \Mu^\eta)\,, \qquad \Mdel \bar \Mu^\eta \deq \Mdel \Mu\,.
\end{align*}
Furthermore, for processes which are not piecewise constant, we can define
\begin{align*}
    \log \Mdet(I + \MD \bar \Mu) \deq \lim_{\eta \searrow 0} \log \Mdet(I + \MD \bar \Mu^\eta)\,, \qquad \Mdel \bar \Mu \deq \lim_{\eta \searrow 0} \Mdel \bar \Mu^\eta\,.
\end{align*}
for some approximation $\bar \Mu^\eta$ to $\bar \Mu$. Note that $I + \MD \bar \Mu$ is not itself a well-defined object in our notation, although the $\log \Mdet$ is well-defined from the expressions above. It turns out~\cite{nualart2006malliavin} that these definitions are invariant with respect to the specific choice of approximation, as long as the limiting quantities are well-defined for the sequence of piecewise constant approximations, and the convergence $\bar \Mu^\eta \to \bar \Mu$ holds in $L^2_{\mbf P}([0, T]; \R^d)$.

Under this convention, it turns out that the preceding computations give rise to a generalization of Girsanov's theorem.
\begin{theorem}[{Anticipating Girsanov theorem, adapted from \cite[Section 4.1.4]{nualart2006malliavin}}]\label{thm:anticipating-girsanov}
    Suppose that in~\eqref{eq:transformation-generic}, $\msf T$ is bijective. Then, let $\mathbf Q$ be the measure under which $\msf T((B_t)_{t \in [0, T)})$ is Brownian, and $\mathbf P$ the measure under which $(B_t)_{t \in [0, T)}$ is Brownian. The Radon--Nikodym derivative is given by
    \begin{align*}
        \frac{\D \mathbf Q}{\D \mathbf P}(\omega) = \abs{\Mdet(I+\MD \bar \Mu(\omega))} \exp\Bigl(-(\Mdel \bar \Mu)(\omega) - \frac{1}{2} \int_0^T \norm{\bar \Mu_t(\omega)}^2 \, \D t\Bigr)\,,
    \end{align*}
    where $\bar \Mu(\omega)$ refers to the value of $\bar \Mu = (\bar \Mu_t)_{t \in [0, T)}$ given $(B_t)_{t \in [0, T)} = \omega \in C_0([0, T); \R^d)$, where $\bar \Mu(\omega)$ is deterministic by assumption. $(\MD \bar \Mu)(\omega),\ \Mdel \bar \Mu(\omega)$ are similarly interpreted.
\end{theorem}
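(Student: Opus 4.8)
The plan is to reduce the statement to a finite-dimensional change of variables and then pass to the limit $\eta\searrow 0$, the finite-dimensional case being essentially the computation already carried out in \S\ref{scn:warmup-m-lmc}.

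\textbf{Step 1: elementary drifts.} Suppose first that $\bar\Mu$ is piecewise constant on the grid $\{k\eta\}$ and depends on $(B_t)_{t\in[0,T)}$ only through the increments $\bs\xi^\eta$, so that, written in terms of the standard Gaussians $\bs\xi^\eta$, the transformation $\msf T$ becomes the finite-dimensional map $\mathbf x\mapsto\mathbf x+\Mu(\mathbf x)$ on $\R^{d\times T/\eta}$ of~\eqref{eq:transform-finite}. The ordinary Gaussian change-of-variables formula, carried out exactly as in the derivation of~\eqref{eq:change-of-measure-guaranteed}, produces the Jacobian factor $\abs{\det(I+\MD\Mu)}$ together with the quadratic correction $\exp(-\sum_i\inner{\Mu_i,\xi_i}-\tfrac12\sum_i\nnorm{\Mu_i}^2)$; regrouping via $\det(I+\MD\Mu)=\Mdet(I+\MD\Mu)\exp(\tr\MD\Mu)$ and $\sum_i\inner{\Mu_i,\xi_i}-\tr\MD\Mu=\Mdel\Mu$ (Definition~\ref{defn:skorohod}) gives the claimed density, call it $\mathbf M^\eta$. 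Bijectivity of $\msf T$ forces $\operatorname{Im}(\operatorname{id}+\Mu)=\R^{d\times T/\eta}$, which upgrades~\eqref{eq:change-of-measure-guaranteed} to the identity $\E_{\mathbf P}[(g\circ\msf T)\,\mathbf M^\eta]=\E_{\mathbf P}[g]$ for all bounded measurable $g$; applying this with $g=f\circ\msf T^{-1}$ and using $\mathbf Q=(\msf T^{-1})_{\#}\mathbf P$ yields $\E_{\mathbf P}[f\,\mathbf M^\eta]=\E_{\mathbf Q}[f]$, i.e.\ $\mathbf M^\eta=\frac{\D\mathbf Q}{\D\mathbf P}$ in this case.

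\textbf{Step 2: passage to the limit.} For a general $\bar\Mu$, choose elementary approximations $\bar\Mu^\eta\to\bar\Mu$ in $L^2_{\mathbf P}([0,T);\R^d)$ (e.g.\ the grid conditional expectations used above to define the limiting objects, arranged so that each $\msf T^\eta$ is again bijective via the triangular structure exploited in \S\ref{scn:warmup-m-lmc}). One then shows that the three ingredients of $\mathbf M^\eta$ converge: $\int_0^T\nnorm{\bar\Mu_t^\eta}^2\,\D t\to\int_0^T\nnorm{\bar\Mu_t}^2\,\D t$ is immediate from $L^2_{\mathbf P}$-convergence; $\Mdel\bar\Mu^\eta\to\Mdel\bar\Mu$ follows from closability/continuity of the divergence operator on its domain, which is where the Malliavin-regularity hypotheses implicit in Definition~\ref{defn:skorohod} enter; and $\log\Mdet(I+\MD\bar\Mu^\eta)\to\log\Mdet(I+\MD\bar\Mu)$ requires the Malliavin derivatives $\MD\bar\Mu^\eta$, regarded as Hilbert--Schmidt-operator-valued random variables, to converge in $L^2_{\mathbf P}$. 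Granting these, $\mathbf M^\eta\to\mathbf M$ in probability; a uniform-integrability argument (for which the estimate $\abs{\Mdet(I+K)}\le\exp(\tfrac12\nnorm{K}^2)$ and the integrability built into the hypotheses suffice) upgrades this to $L^1(\mathbf P)$, and then testing against cylindrical functionals, combined with the weak convergence $\mathbf Q^\eta=(\msf T^\eta)^{-1}_{\#}\mathbf P\to\mathbf Q$ inherited from $\msf T^\eta\to\msf T$, identifies $\mathbf M=\frac{\D\mathbf Q}{\D\mathbf P}$.

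\textbf{The main obstacle} is the determinant term. The ordinary determinant $\det(I+K)$ is \emph{not} continuous under Hilbert--Schmidt perturbations of $K$ (only under trace-class perturbations), so a naive limit of $\det(I+\MD\Mu^\eta)$ would be ill-posed; this is precisely why the regularized Carleman--Fredholm determinant $\Mdet(I+K)=\det(I+K)e^{-\tr K}$ appears — it \emph{is} continuous in the Hilbert--Schmidt topology and obeys the exponential bound above. Establishing the $L^2_{\mathbf P}$-convergence of $\MD\bar\Mu^\eta$ as Hilbert--Schmidt operators, and the companion point that no mass is lost in the limit so that one retains $\mathbf M\,\D\mathbf P=\D\mathbf Q$ rather than merely $\mathbf M\,\D\mathbf P\le\D\mathbf P$, is the technical heart of the argument; for the concrete discretizations of \S\ref{scn:applications} it is verified by hand from explicit Hessian formulas of the type~\eqref{eq:sde-mall-deriv} and the properness estimate~\eqref{eq:aligned}, while the general statement is the content of~\cite[\S4.1.4]{nualart2006malliavin}, from which the theorem is adapted.
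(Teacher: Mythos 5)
Your Step 1 reproduces, essentially verbatim, the paper's own treatment of the elementary case from \S\ref{scn:warmup-m-lmc}: the finite-dimensional Gaussian change of variables, the regrouping $\det(I+\MD\Mu)=\Mdet(I+\MD\Mu)\exp(\tr\MD\Mu)$, the identification $\sum_i\inner{\Mu_i,\xi_i}-\tr\MD\Mu=\Mdel\Mu$, and the use of bijectivity to upgrade the inequality~\eqref{eq:change-of-measure-guaranteed} to equality. Your Step 2, however, is \emph{not} what the paper does. The paper does not attempt the passage to the limit at all: it states plainly that the general case requires ``heavy analytic machinery'' and simply cites~\cite[\S4.1]{nualart2006malliavin}. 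It then observes that, for the $\Renyi_q$ bounds the paper actually needs, one can bypass $L^1$-convergence of the densities $\mathbf M^\eta$ entirely by combining the elementary-case identity with lower semicontinuity of $\Renyi_q$ in both arguments under weak convergence of laws --- a deliberately cheaper route. Your sketch of the general passage (closability of $\Mdel$ under $\mathbb{D}^{1,2}$-convergence, Hilbert--Schmidt continuity of $\Mdet$, uniform integrability via $\abs{\Mdet(I+K)}\le\exp(\tfrac12\nnorm{K}_{\mathrm{HS}}^2)$) is an accurate outline of the cited textbook proof, and your observation that the Carleman--Fredholm regularization is precisely what makes the determinant survive the limit is exactly the right point to emphasize. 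One caveat: to pass $\log\Mdet(I+\MD\bar\Mu^\eta)\to\log\Mdet(I+\MD\bar\Mu)$ (as opposed to convergence of $\Mdet$ itself) you also need $\Mdet(I+\MD\bar\Mu^\eta)$ uniformly bounded away from zero, which is where invertibility/properness estimates of the type~\eqref{eq:aligned} are used; the determinant-level statement does not require this, so keeping track of which version is used in the uniform-integrability step would tighten the argument. In short, the proposal is a correct and more complete sketch than the paper's own account, but it attacks the limit head-on where the paper deliberately sidesteps it via lower semicontinuity of $\Renyi_q$.
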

This lemma was already proven for elementary processes in the preceding section. Whereas the extension to arbitrary processes can be formally done using some heavy analytic machinery~\cite{nualart2006malliavin}, this will not be necessary for computing $\Renyi_q$ between the processes, by using the weak convergence of the elementary approximations as well as the lower semicontinuity of $\Renyi_q$ in both arguments. See~\cite[Section 4.1]{nualart2006malliavin} for a complete proof.

For reference, we provide the original Girsanov theorem below. It will be easier to utilize than the anticipating Girsanov equation when dealing with transformations given by an SDE, i.e., without any anticipation in the drift.
\begin{theorem}[{Non-anticipating Girsanov; adapted from~\cite[Theorem 8.6.8]{oksendal2013stochastic}}]\label{thm:regular-girsanov}
    Suppose that in~\eqref{eq:transformation-generic}, $\Mu_s \in L^2_{\mathbf P, \operatorname{ad}}([0, T]; \R^d)$ is adapted. Then, if $\mathbf Q, \mathbf P$ are again the measures induced by $\msf T$ and the Wiener measure respectively as in Theorem~\ref{thm:anticipating-girsanov}, the Radon--Nikodym derivative is given by
    \begin{align*}
        \frac{\D \mathbf Q}{\D \mathbf P}(\omega) = \exp\Bigl( -\int_0^T \inner{\bar \Mu_t, \D B_t}(\omega) - \frac{1}{2} \int_0^T \norm{\bar \Mu_t(\omega)}^2 \, \D t \Bigr)\,,
    \end{align*}
    where again $\int_0^T \inner{\bar \Mu_t, \D B_t}(\omega)$ denotes the It\^o integral of $\Mu_t$ (evaluated at the Brownian path realization $(B_t)_{t \in [0, T]} = \omega$). This requires Novikov's condition to be satisfied:
    \begin{align}\label{eq:novikov}
        \E_{\mathbf P}\Bigl[\exp\Bigl(\frac{1}{2} \int_0^T \norm{\bar \Mu_t(\omega)}^2 \, \D t\Bigr)\Bigr] < \infty\,.
    \end{align}
\end{theorem}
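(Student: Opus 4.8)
The plan is to run the classical martingale proof of Girsanov's theorem, isolating Novikov's condition~\eqref{eq:novikov} as the one substantive analytic input and treating everything else as routine \Ito calculus together with L\'evy's characterization of Brownian motion.

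First I would introduce the stochastic exponential
\[
    Z_t \deq \exp\Bigl(-\int_0^t \inner{\bar\Mu_s, \D B_s} - \tfrac12 \int_0^t \norm{\bar\Mu_s}^2 \, \D s\Bigr)\,,
\]
which is well defined since~\eqref{eq:novikov} forces $\int_0^T \norm{\bar\Mu_s}^2\,\D s < \infty$ $\mathbf P$-a.s., so the \Ito integral of the adapted integrand $\bar\Mu$ exists. By \Ito's formula $\D Z_t = -Z_t \inner{\bar\Mu_t, \D B_t}$, hence $(Z_t)_{t \in [0,T)}$ is a nonnegative continuous local martingale with $Z_0 = 1$. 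The crux is to upgrade this to a genuine martingale: I invoke Novikov's criterion, which guarantees that~\eqref{eq:novikov} implies $Z$ is a uniformly integrable martingale on $[0,T]$, so in particular $\E_{\mathbf P}[Z_T] = 1$. This is the main obstacle — it is the only place where~\eqref{eq:novikov} is used, and its proof (for instance via Kazamaki's condition and an exponential-moment bootstrap) is genuinely nontrivial, so I would cite it rather than reprove it. Granting this, define $\mathbf Q$ by $\D\mathbf Q/\D\mathbf P = Z_T$; since $Z_T > 0$ $\mathbf P$-a.s., $\mathbf Q$ and $\mathbf P$ are mutually equivalent, so the Radon--Nikodym derivative in the statement holds by construction, and it remains only to check that $\mathbf Q$ is the advertised measure, i.e.\ that $\msf T(B)$ is $\mathbf Q$-Brownian.

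For this last step, write $\tilde B_t \deq \msf T_t(B) = B_t + \int_0^t \bar\Mu_s\,\D s$ and apply L\'evy's characterization. The quadratic covariation is insensitive to the absolutely continuous drift and to an equivalent change of measure, so $\langle \tilde B^i, \tilde B^j\rangle_t = \delta_{ij}\, t$ under $\mathbf Q$. For the local martingale property I use the standard fact that a process is a $\mathbf Q$-local martingale precisely when its product with $Z$ is a $\mathbf P$-local martingale; computing $\D(\tilde B_t Z_t)$ by \Ito's product rule and using that the ($\R^d$-valued) cross-variation is $\D\langle B, Z\rangle_t = -Z_t \bar\Mu_t\,\D t$, the finite-variation term $Z_t \bar\Mu_t\,\D t$ produced by the drift of $\tilde B$ cancels exactly against the cross-variation term, leaving $\D(\tilde B_t Z_t) = Z_t\,\D B_t + (\text{local-martingale terms})$. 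Hence $\tilde B$ is a continuous $\mathbf Q$-local martingale starting at $0$, and L\'evy's theorem yields that $(\tilde B_t)_{t \in [0,T)}$ is a $d$-dimensional $\mathbf Q$-Brownian motion, which is exactly the claim.

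As a consistency check, this statement is the degenerate case of Theorem~\ref{thm:anticipating-girsanov}: when $\bar\Mu$ is adapted, $\MD_s \bar\Mu_t = 0$ for $s > t$, so $\MD\bar\Mu$ is strictly lower triangular in the time index, whence $\tr\MD\bar\Mu = 0$ and $\Mdet(I+\MD\bar\Mu) = 1$, while $\Mdel\bar\Mu$ collapses to the \Ito integral $\int_0^T \inner{\bar\Mu_t, \D B_t}$; substituting these into the anticipating formula reproduces the expression above, with Novikov's condition playing the role of the integrability hypothesis needed there.
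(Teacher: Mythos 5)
Your proposal is correct, but it takes a genuinely different route from the paper. The paper offers no standalone martingale proof of Theorem~\ref{thm:regular-girsanov}: it cites \cite[Theorem 8.6.8]{oksendal2013stochastic} and then, in the two sentences following the statement, simply observes that the adapted case is a degenerate specialization of Theorem~\ref{thm:anticipating-girsanov} — namely, for adapted $\bar\Mu$ the Malliavin derivative kernel $\MD\bar\Mu$ is quasi-nilpotent (equivalently, strictly lower triangular in the discretized picture, since $\MD_i \Mu_j = 0$ for $i \geq j$), so $\Mdet(I+\MD\bar\Mu) = 1$ and $\Mdel\bar\Mu$ collapses to the It\^o integral. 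That is exactly what you include at the end as a ``consistency check,'' so in effect your final paragraph \emph{is} the paper's entire argument. The body of your proof — constructing the stochastic exponential $Z_t$, invoking Novikov to upgrade the local martingale to a uniformly integrable one, defining $\mathbf Q$ via $Z_T$, and verifying Brownianity of $\msf T(B)$ under $\mathbf Q$ by the It\^o product rule and L\'evy's characterization — is the classical, self-contained textbook proof. Both are valid: your route is independent of the anticipating machinery (and of its elementary/limiting approximation argument), at the cost of being longer, whereas the paper's route is a one-line corollary once Theorem~\ref{thm:anticipating-girsanov} is in hand but inherits whatever technical caveats attend the anticipating theorem. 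One small point worth flagging in your consistency check: the cancellation $\Mdet(I+\MD\bar\Mu)=1$ uses the paper's convention that $\Mu_i$ depends only on $\xi_1,\dots,\xi_{i-1}$ (the drift on $[(i-1)\eta, i\eta)$ is $\ms F_{(i-1)\eta}$-measurable), which makes $\MD\bar\Mu$ \emph{strictly} lower triangular so both $\det(I+\MD\bar\Mu)=1$ and $\tr\MD\bar\Mu=0$; it is worth stating that convention explicitly rather than writing only ``$\MD_s\bar\Mu_t = 0$ for $s>t$,'' which by itself leaves the diagonal ambiguous.
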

Theorem~\ref{thm:regular-girsanov} can be recovered as a special case of the anticipating Girsanov theorem. In this case, $\MD \bar \Mu$ is a quasi-nilpotent kernel (all its eigenvalues are zero), $\Mdet(I+ \MD \bar \Mu) = 1$. Secondly, the Skorohod adjoint reduces to the It\^o integral, which then gives the theorem as desired. 

\paragraph{User's guide to the anticipating Girsanov theorem}

The following procedure provides a step-by-step guide to the machinery developed in the sections above. Here, we suppose we want to compare two discretizations of the forms
\begin{align}\label{eq:equation-generic}
\begin{aligned}
    X_t &= X_0 - \int_0^t \msf G_s(X_{[0,T]}) \, \D s + \sqrt{2} \, \D \tilde B_t \\
    X_t &= X_0 - \int_0^t \msf F_s(X_{[0,T]}, B_{[0, T]}) \, \D s + \sqrt{2} \, \D B_t\,,
\end{aligned}
\end{align}
where for all time, $\msf F, \msf G$ depend on $X_{[0, T]}$ only through finitely many statistics of the form $\int_{t_0}^{t_1} f(t) X_t \, \D t$ for $t_0 < t_1$ and some function $f$, and likewise depend only on finitely many statistics of $B_{[0, T]}$. Furthermore, $\msf F, \msf G$ should be suitably continuous in time. Certain generalizations of this set-up may also be permitted, but we do not cover such generality in this text.
\begin{enumerate}[label=(\Alph*)]
    \item Discretize the two formulations above using piecewise constant schemes (which resemble the ``Euler--Maruyama'' schemes), defined for $t \in [0, T]$, with $\xi_k$ a sequence of i.i.d.\ standard $\mbf P$-Gaussians which correspond to the increments $B_{k\eta} - B_{(k-1)\eta}$, and with $\tilde \xi_k$ defined as
    \begin{align*}
        \hat X_{k\eta}^\eta &= \hat X_{(k-1)\eta}^\eta - \eta \msf G_{(k-1)\eta}(\hat X^\eta_{[0, T]}) + \sqrt{2 \eta} \tilde \xi_k\,, \\
        \hat X_{k\eta}^\eta &= \hat X_{(k-1)\eta}^\eta  - \eta \msf F_{(k-1)\eta}(\hat X^\eta_{[0, T]}, B_{[0, T]}) + \sqrt{2\eta} \xi_k\,.
    \end{align*}
    Note that the scheme above is possibly implicit, in that the vector $\hat {\bs X}^\eta \in \R^{d \times T/\eta}$ solves an equation like $\hat {\bs X}^\eta = \bs \psi(\hat {\bs X}^\eta, \bs \xi)$ for some non-linear function $\bs \psi: \R^{d \times T/\eta} \to \R^{d\times T/\eta}$. Where necessary, one needs to ensure that this implicit equation has a unique and well-defined solution for $\eta$ small enough, and that it converges to the solution of~\eqref{eq:equation-generic} in $L^2_{\mbf P}([0, T]; \R^d)$. These properties are verified using standard analysis techniques, which is usually best done at the end (as it requires computations from the successive parts).
    
    \item One then considers \begin{align*}
        \Mu^\eta_k = \sqrt{\frac{\eta}{2}} \Bigl(\msf G_{(k-1)\eta}(\hat X^\eta_{[0, T]}) - \msf F_{(k-1)\eta}(\hat X^\eta_{[0, T]}, B_{[0, T]}) \Bigr)\,,
    \end{align*}
    and computes the relevant Malliavin objects $\log\Mdet(I+ \MD \Mu^\eta)$, $\Mdel \Mu^\eta$ using standard finite-dimensional calculus.

    \item Then, assuming that the expressions $\log\Mdet(I+ \MD \Mu^\eta)$, $\Mdel \Mu^\eta$ remain almost surely finite in the limit as $\eta \searrow 0$, one formally assigns these limits as the values of $\log\Mdet(I+ \MD \bar \Mu)$, $\Mdel \bar \Mu$ respectively.

    \item Substitute these expressions into Theorem~\ref{thm:anticipating-girsanov} in order to obtain a simple expression for the Radon--Nikodym derivative. Use this to compute the $\KL$ divergence between path measures, or whatever other statistic that may be desired. Here, some extra work is usually needed to control $\norm{\bar \Mu}_{L^2_{\mbf P}([0, T]; \R^d)}$, which is somewhat orthogonal to the anticipating Girsanov machinery; instead, it is usually similar to existing discretization analyses in sampling, see~\cite{chewisamplingbook}.
\end{enumerate}
\begin{remark}
    Note that it does not matter precisely how one discretizes~\eqref{eq:equation-generic}, as long as the discretization scheme is well-defined and converges in $L^2_{\mbf P}([0, T]; \R^d)$.
\end{remark}

\section{Applications}\label{scn:applications}

\subsection{M--LMC}

We conclude our presentation of the analysis for~\eqref{eq:m-lmc} according to our blueprint above.

\paragraph{Part (C): Limiting quantities for M--LMC.}
Combining all our calculations yields the formula for the determinant, once we take $\eta \searrow 0$. First, we give the results for the traces of the various objects.
\begin{lemma}[Trace computations for~\eqref{eq:m-lmc}]\label{lem:trace-M-LMC}
    We have in the limit as $\eta \to 0$, if $\tau \ll \beta^{-1}$, and referring only to the $[1,1]$-blocks of the matrices as in Lemma~\ref{lem:MD-one-step},
    \begin{align*}
        \tr (\msf A^2) &\to 0 \\
        \tr \Bigl(\MD \{\nabla V(X^+) \bbv^\top \} \cdot {\msf A}\Bigr) &\to 2 \int_0^{\tau} t\tr\Bigl(\nabla^2 V(X^+) \nabla^2 V(X_t) \Bigr) \, \D t\\
        \tr \Bigl(({\MD \{\nabla V(X^+)\} \bbv})^2\Bigr) &\to 2 \tau\tr\Bigl((\nabla^2 V(X^+))^2\Bigr)\,.
    \end{align*}
\end{lemma}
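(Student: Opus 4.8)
The plan is to reduce all three limits to elementary finite-dimensional block-matrix computations at fixed $\eta$, and then to pass $\eta \searrow 0$ by recognizing the resulting temporal sums as Riemann sums. The only inputs are the explicit forms of the $[1,1]$-blocks recorded in Lemma~\ref{lem:MD-one-step} and \eqref{eq:sde-mall-deriv}: writing $H^\eta_t = \nabla^2 V(\hat X^\eta_t)$ and $H^+ = \nabla^2 V(X^+)$, the block $\msf A$ is \emph{strictly triangular} in the temporal index, with $(i,j)$-block a fixed multiple of $\eta\,H^\eta_{(j-1)\eta}\,\one_{i\le j-1}$, while $\MD\{\nabla V(X^+)\}\bbv^\top$ carries the \emph{same} $d\times d$ block --- a fixed multiple of $\eta\,H^+$ --- in every one of its $(\tau/\eta)^2$ positions. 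The remaining piece of $\MD\{\nabla V(\hat X)\}$, namely the $\propto (i-1)\,\eta^2\,H^\eta H^+$ term in \eqref{eq:sde-mall-deriv} obtained by differentiating $X^+$ inside $\hat X$, is smaller by a factor $O(\beta h)$ and, using $\tau \ll \beta^{-1}$, only feeds the $O(\beta^3 d h^3 q^3)$ remainder isolated by Lemma~\ref{lem:log-det}; I would discard it before taking the limit.

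Granting these forms, the first identity is immediate and in fact exact: a strictly triangular matrix squares to a strictly triangular matrix, so every diagonal block of $\msf A^2$ vanishes and $\tr(\msf A^2) = 0$ at every $\eta$, a fortiori in the limit. (In the continuum this says that $\msf A$ converges to a Volterra integral operator, roughly $f \mapsto \sqrt2\int_\cdot^\tau \nabla^2 V(X_s)\,f(s)\,\D s$, whose square has a kernel vanishing on the diagonal.) For the second identity I multiply the constant-block matrix into $\msf A$: the $(i,i)$-block of the product is a fixed multiple of $\eta^2\,H^+\sum_{j>i} H^\eta_{(j-1)\eta}$, so summing over $i$ and using $\sum_i \one_{i<j} = j-1$ turns the trace into $c\sum_j \eta\cdot(j-1)\eta\cdot\tr(H^+ H^\eta_{(j-1)\eta})$ up to lower-order terms, a Riemann sum for $\int_0^\tau t\,\tr(\nabla^2 V(X^+)\,\nabla^2 V(X_t))\,\D t$, with $c$ fixed by the $\sqrt2$ and $\sqrt{\eta/2}$ normalizations of \eqref{eq:psi-ld}. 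For the third, squaring the constant-block matrix contracts its one free temporal index --- producing a factor $(\tau/\eta)\cdot\eta = \tau$ --- and leaves a multiple of $(H^+)^2$ in every position, so the trace reduces to the claimed multiple of $\tau\,\tr((\nabla^2 V(X^+))^2)$.

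The only genuine work is justifying the limit $\eta \searrow 0$. I would use the $L^2_{\mathbf P}([0,T);\R^d)$-convergence of the piecewise-constant Euler interpolant $\hat X^\eta$ to the continuous process (Part (A)) together with the uniform continuity and $\beta$-boundedness of $\nabla^2 V$ (Assumption~\ref{as:regularity}) to replace $H^\eta_{(j-1)\eta} = \nabla^2 V(\hat X^\eta_{(j-1)\eta})$ by $\nabla^2 V(X_{(j-1)\eta})$ uniformly up to an $o(1)$ error, and then let the Riemann sums converge to the displayed integrals (passing to a subsequence, one may take this convergence almost surely). The main obstacle is exactly this interchange of limits: one has to check that truncating the $O(\beta h)$-smaller cross terms and discretizing the time integral incur errors that are uniform over the Brownian path, which is where $\tau \ll \beta^{-1}$ is used; everything after that is bookkeeping of $\sqrt2$ and $\eta$ factors.
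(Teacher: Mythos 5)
Your handling of the first two identities is correct and takes essentially the same route as the paper: strict temporal triangularity for $\tr(\msf A^2)$, and the block computation plus Riemann-sum passage for the cross term. Note that the paper computes the $(\ell,\ell)$-block of the \emph{rank-one-times-$\msf A$} product directly, obtaining $2\eta^2(\ell-1)\nabla^2 V(X^+)\nabla^2 V(\hat X_{\ell\eta})$, whereas you compute the $(i,i)$-block of $\msf A$ times the rank-one matrix and then swap the $i,j$ sums; by cyclicity of the trace these are the same, so your variant is fine.

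For the third identity, however, your bookkeeping drops a factor. The constant-block ($[1,1]$-restricted) matrix $C$ has $C_{i,j}$ equal to a fixed multiple of $\eta\,\nabla^2 V(X^+)$ for all $\tau/\eta$-by-$\tau/\eta$ temporal indices, so squaring contracts one index to give $(C^2)_{i,j} \propto (\tau/\eta)\cdot\eta^2\,(\nabla^2 V(X^+))^2 = \tau\eta\,(\nabla^2 V(X^+))^2$, and the full trace then sums over the $\tau/\eta$ remaining diagonal blocks, producing a second factor: $\tr(C^2) \propto (\tau/\eta)\cdot\tau\eta\,\tr\bigl((\nabla^2 V(X^+))^2\bigr) = \tau^2\,\tr\bigl((\nabla^2 V(X^+))^2\bigr)$. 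Your statement ``producing a factor $(\tau/\eta)\cdot\eta = \tau$'' counts only one temporal contraction, where the trace supplies two. To be fair, the lemma statement itself appears to carry the same typo: the paper's own proof writes the $(\ell,\ell)$-block as $2\eta^2(\tau/\eta - 1)(\nabla^2 V(X^+))^2$ and then sums over $\tau/\eta$ values of $\ell$, which yields $\asymp 2\tau^2\,\tr\bigl((\nabla^2 V(X^+))^2\bigr)$, and the downstream Lemma~\ref{lem:MD-one-step} indeed contains a $\tau^2$ (not $\tau$) in front of $\tr((\nabla^2 V(X^+))^2)$. So you have reproduced the stated (but apparently mis-typeset) limit rather than the one the computation actually yields; the argument should be corrected to exhibit both temporal contractions.
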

\begin{proof}
    The first identity is obvious as $\msf A$ is strictly lower triangular in its temporal indices. As for the second, we have, since ${\MD \{\nabla V(X^+)\} \bbv^\top }$ is (temporally) rank-one,
    \begin{align*}
        \Bigl({\MD \{\nabla V(X^+)}\} \bbv^\top \cdot \msf A \Bigr)_{\ell,\ell} = 2\eta^2 (\ell - 1) \nabla^2 V(X^+) \nabla^2 V(\hat X_{\ell \eta})\,.
    \end{align*}
    Finally, ${\MD \{\nabla V(X^+)\} \bbv^\top }$ being temporally rank-one allows us to derive
    \begin{align*}
        (({\MD \{\nabla V(X^+)\}} \bbv)^2)_{\ell,\ell} = 2\eta^2 (\frac{\tau}{\eta} - 1) \nabla^2 V(X^+) \cdot  \nabla^2 V(X^+)\,.
    \end{align*}
    These easily give the identities in the limit, using the uniform continuity in Assumption~\ref{as:regularity}.
\end{proof}

Combining this with our earlier analysis gives the following one-step bound. First, from~\eqref{eq:m-lmc}, we obtain
\begin{align*}
    \bar \Mu = \frac{1}{\sqrt{2}} \Bigl(\nabla V(X_t) - \nabla V(X^+) \Bigr)\,.
\end{align*}
\begin{lemma}\label{lem:MD-one-step}
    Let $\Mu^\eta$ be defined as in~\eqref{eq:psi-ld} with $N = 1$. Under Assumption~\ref{as:regularity},
    \begin{align*}
        \Mdel \bar \Mu &= \frac{1}{\sqrt{2}}\int_0^h \inner{\nabla V(X_t), \,\D B_t}-\frac{1}{\sqrt{2}}\inner{\nabla V(X^+), B_h} + \frac{\tau}{\sqrt{2}} \tr(\nabla^2 V(X^+)) \\
        &\qquad + \int_0^{\tau} t \tr(\nabla^2 V(X_t)\nabla^2 V(X^+))\, \D t\,.
    \end{align*}
    Furthermore,
    \begin{align*}
        \log \Mdet(\msf I + q \MD \bar \Mu) &= 
        -\sqrt{2}q^2 \int_0^{\tau} t \tr(\nabla^2 V(X_t) \nabla^2 V(X^+))\,\D t - \frac{ \tau^2 q^2}{\sqrt{2}} \tr((\nabla^2 V(X^+))^2) \\
        &\qquad+ O(\beta^3 dh^3 q^3)\,.
    \end{align*}
    This implies in particular that if $h \lesssim \frac{1}{\beta q}$ is sufficiently small, then the transformation in~\eqref{eq:transformation-generic} will be invertible.
\end{lemma}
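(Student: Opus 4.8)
The plan is to read off both identities by passing the discrete expressions of \S\ref{scn:warmup-m-lmc} to the limit $\eta\searrow0$; no new machinery is required beyond Parts (A)--(B) above and the trace computations of Lemma~\ref{lem:trace-M-LMC}. Throughout, $N=1$ so the relevant interval is $[0,h)$ and $\bar\Mu=\tfrac1{\sqrt2}(\nabla V(X_t)-\nabla V(X^+))$ as stated.

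\textbf{The Skorohod term.} Starting from Definition~\ref{defn:skorohod} I would write $\Mdel\Mu^\eta=\sum_{i=1}^{h/\eta}\inner{\Mu_i^\eta,\xi_i}-\tr\MD\Mu^\eta$ with $\Mu_i^\eta$ as in~\eqref{eq:psi-ld}, and split $\Mu_i^\eta$ into its adapted piece $\propto\nabla V(\hat X_{(i-1)\eta})$ and its anticipating piece $\propto\nabla V(X^+)$. The adapted sum is a left-endpoint Riemann--It\^o sum of an adapted integrand, so (using $\hat X^\eta\to X$ and the regularity in Assumption~\ref{as:regularity}) it converges in $L^2_{\mbf P}$ to $\tfrac1{\sqrt2}\int_0^h\inner{\nabla V(X_t),\D B_t}$; since $\nabla V(X^+)$ does not depend on $i$, the anticipating sum telescopes against $\sum_i\xi_i$ to give $-\tfrac1{\sqrt2}\inner{\nabla V(X^+),B_h}$ exactly, with no limit needed. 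For $\tr\MD\Mu^\eta=\sum_i\tr(\MD_i\Mu_i^\eta)$ I substitute the Malliavin-derivative formulas of Part (B): in $\MD_i\Mu_i^\eta$ the ``$\one_{i\le i-1}$'' term of~\eqref{eq:sde-mall-deriv} vanishes, leaving one diagonal contribution of order $\eta$ from $\MD_i\{\nabla V(X^+)\}$ and one of order $\eta^2(i-1)$ from $\nabla^2 V(\hat X_{(i-1)\eta})\,\nabla^2 V(X^+)$. Summing over $i\le\tau/\eta$ and using uniform continuity of $\nabla^2 V$, these become $\tfrac{\tau}{\sqrt2}\tr(\nabla^2 V(X^+))$ and $\int_0^\tau t\,\tr(\nabla^2 V(X_t)\nabla^2 V(X^+))\,\D t$ respectively, once the $\sqrt2$ and $\eta$ normalizations of~\eqref{eq:interpolant-drift} and~\eqref{eq:psi-ld} are tracked carefully. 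Assembling the three pieces gives the stated formula for $\Mdel\bar\Mu$.

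\textbf{The log-determinant.} Recall from the end of \S\ref{scn:warmup-m-lmc} that, for $h\lesssim\tfrac1{\beta q}$, after reducing to the $[1,1]$-block (using that $I+q\MD\Mu$ is block lower-triangular with unipotent $[2,2]$-block) and invoking Lemma~\ref{lem:log-det}, one has $\log\abs{\det(I+q\MD\Mu)}-\tr(q\MD\Mu)=-\tfrac{q^2}{2}\tr((\MD\{\nabla V(X^+)\}\bbv^\top-\msf A)^2)+O(\beta^3 dh^3 q^3)$. I would then expand the square as $\msf A^2-2\,\MD\{\nabla V(X^+)\}\bbv^\top\msf A+(\MD\{\nabla V(X^+)\}\bbv^\top)^2$ and substitute the three trace limits of Lemma~\ref{lem:trace-M-LMC}: $\tr(\msf A^2)\to0$, the cross-term limit $\tr(\MD\{\nabla V(X^+)\}\bbv^\top\msf A)\to2\int_0^\tau t\,\tr(\nabla^2 V(X^+)\nabla^2 V(X_t))\,\D t$, and the limit of $\tr((\MD\{\nabla V(X^+)\}\bbv)^2)$. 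Collecting terms (again minding the normalizations) yields the claimed closed form, the remainder $O(\beta^3 dh^3 q^3)$ being carried over unchanged.

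\textbf{Invertibility, and the main obstacle.} The final assertion is immediate from the ``Validity of the discretization'' paragraph: $q\MD\Mu$ has operator norm $O(\beta hq)$, strictly below $1$ once $h\lesssim\tfrac1{\beta q}$ with a small enough constant, and properness of $\msf T:\bs\xi\mapsto\bs\xi+q\Mu$ follows from the chain of bounds~\eqref{eq:aligned}; hence $\operatorname{id}+q\Mu$ is onto and Theorem~\ref{thm:anticipating-girsanov} applies with equality. The one genuinely delicate point --- and what I expect to be the main obstacle --- is the passage to the limit itself: one must verify that $\Mdel\Mu^\eta$ and $\log\Mdet(I+q\MD\Mu^\eta)$ converge almost surely to finite limits, that $\bar\Mu^\eta\to\bar\Mu$ in $L^2_{\mbf P}([0,h];\R^d)$, and hence that the limiting objects are independent of the chosen approximation, which is precisely the regularity input needed to instantiate the limiting form of Theorem~\ref{thm:anticipating-girsanov}; this is where the uniform continuity and the two-sided bound on $\nabla^2 V$ in Assumption~\ref{as:regularity} are used. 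Everything else reduces to Riemann-sum convergence and the already-established Lemma~\ref{lem:trace-M-LMC}.
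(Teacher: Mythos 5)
Your proposal reconstructs exactly the argument the paper intends: Lemma~\ref{lem:MD-one-step} is stated without a separate proof and is meant to be read off from Parts (A)--(B) of \S\ref{scn:warmup-m-lmc} (the discrete $\Mdel$ and the block-triangular reduction plus Lemma~\ref{lem:log-det}), together with the trace limits of Lemma~\ref{lem:trace-M-LMC} and the invertibility/properness discussion in the ``Validity of the discretization'' paragraph, and you carry out precisely this assembly, including the telescoping of the anticipating inner-product piece and the correct identification of the two diagonal contributions to $\tr\MD\Mu^\eta$ (the $\one_{i\le i-1}$ term vanishing). The only soft spot is that you assert rather than verify the $\sqrt 2$ and $\eta$ bookkeeping that produces the precise coefficients in the displayed identities; that is consistent with the paper's own level of detail here, but a careful reader should check those normalization constants against the definitions~\eqref{eq:psi-ld} and~\eqref{eq:interpolant-drift} before relying on the exact prefactors.
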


We now define the multistep version of our midpoint method for~\eqref{eq:langevin_diff}, given below. As usual, consider $\tilde B_{[0, T]}, B_{[0, T]}$ as $\mbf Q, \mbf P$ Brownian motions respectively. Then, write
\begin{align}
    X_t = X_{kh} - \int_{kh}^t \nabla V(X_s) \,\D s + \sqrt{2}(\tilde B_t - \tilde B_{kh})\,,
\end{align}
and
\begin{align}\label{eq:M-LMC}\tag{M-LMC}
    X_t = X_{kh} - \int_{kh}^t \nabla V(X_k^+)\, \D s + \sqrt 2(B_t - B_{kh})\,,
\end{align}
for the sequence of interpolants, given interpolant times $\tau_k \in [0, h)$ for $k \in [N]$,
\begin{align*}
    X_k^+ = X_{kh} - \int_{kh}^{kh+\tau_k} \nabla V(X_{kh}) \, \D t + \sqrt{2} (B_{kh+\tau_k} - B_{kh})\,.
\end{align*}
Therefore we can define, for $t \in [(k-1)h, kh)$,
\begin{align*}
    \bar \Mu_t = \frac{1}{\sqrt{2}}\Bigl(\nabla V(X_t) - \nabla V(X^+_k)\Bigr)\,.
\end{align*}

\begin{lemma}\label{lem:MD-multistep}
    For the extension $\bar \Mu$ defined above, we have under the same conditions as Lemma~\ref{lem:MD-one-step},
    \begin{align*}
       \Mdel \bar \Mu &= \frac{1}{\sqrt{2}}\int_0^{Nh} \inner{\nabla V(X_t), \,\D B_t}-\sum_{k=1}^N \frac{1}{\sqrt{2}}\inner{\nabla V(X^+_k), B_{kh} - B_{(k-1)h}} + \sum_{k=1}^N \frac{\tau_k}{\sqrt{2}} \tr(\nabla^2 V(X^+_k)) \\
        &\qquad + \sum_{k=1}^N \int_{(k-1)h}^{(k-1)h + \tau_k} t \tr(\nabla^2 V(X_t)\nabla^2 V(X^+_k))\, \D t\,.
    \end{align*}
    Furthermore,
    \begin{align*}
        \log \Mdet(\msf I + q \MD \bar \Mu) &=
        -\sum_{k=0}^{N-1} \Biggl\{\sqrt 2q^2 \int_{kh}^{kh+\tau_k} t \tr(\nabla^2 V(X_t) \nabla^2 V(X^+_k))\,\D t -\frac{\tau_k^2 q^2 }{\sqrt{2}} \tr((\nabla^2 V(X^+_k))^2)\Biggr\} \\
        &\qquad + O(\beta^3 d h^3 q^3 N)\,.
    \end{align*}
    In particular, for $h \lesssim \frac{1}{\beta q}$ with a small enough implied constant, the determinant can almost surely be bounded below by a non-zero constant and also be bounded above, showing that the transform is almost surely locally invertible.
\end{lemma}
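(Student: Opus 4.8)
The plan is to reduce the multistep statement to $N$ pathwise applications of the single‑step Lemma~\ref{lem:MD-one-step}, exploiting the causal structure of the discretization. Fix a mesh $\eta$ dividing $h$ and every $\tau_k$, form the elementary process $\hat X$ over $[0,Nh)$ by concatenating the per‑step recursions, and set $\Mu^\eta_i=\sqrt{\eta/2}\,(\nabla V(\hat X_{(i-1)\eta})-\nabla V(X^+_k))$ for $i$ in step $k$, exactly as in~\eqref{eq:psi-ld}. Partition the index set into the $N$ consecutive step‑blocks and view $I+q\,\MD\Mu^\eta$ in the associated $N\times N$ block form. Two structural facts drive everything: (i) $\bar\Mu_t$ for $t$ in step $k$ is a function of the Brownian increments of steps $1,\dots,k$ only, so the $(k,j)$ block vanishes for $j>k$ and $I+q\,\MD\Mu^\eta$ is block lower triangular; and (ii) for the diagonal block $(k,k)$ the state $X_{(k-1)h}$ at the start of the step does not depend on the within‑step increments, so under $\MD$ in those increments it is a frozen initial point, and $I+q(\MD\Mu^\eta)_{[k,k]}$ is \emph{literally} the single‑step matrix of \S\ref{scn:warmup-m-lmc} with $X_0\mapsto X_{(k-1)h}$, $\tau\mapsto\tau_k$, and the step‑$k$ increments playing the role of $(\xi_i)$ --- the sub‑diagonal blocks merely record how the step‑$k$ state reacts to earlier increments.

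From here both computed quantities factor. Block lower‑triangularity gives $\det(I+q\,\MD\Mu^\eta)=\prod_{k}\det(I+q(\MD\Mu^\eta)_{[k,k]})$ and $\tr(q\,\MD\Mu^\eta)=\sum_k\tr(q(\MD\Mu^\eta)_{[k,k]})$, hence $\log\Mdet(I+q\,\MD\Mu^\eta)=\sum_{k=1}^N\log\Mdet(I+q(\MD\Mu^\eta)_{[k,k]})$, with the off‑diagonal blocks dropping out entirely. Similarly $\Mdel\Mu^\eta=\sum_i\inner{\Mu^\eta_i,\xi^\eta_i}-\tr\MD\Mu^\eta$ splits as $\sum_{k=1}^N\bigl(\sum_{i\in\text{step }k}\inner{\Mu^\eta_i,\xi^\eta_i}-\tr(\MD\Mu^\eta)_{[k,k]}\bigr)$, since the first sum is naturally a sum over steps and the trace is already block‑diagonal. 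Applying Lemma~\ref{lem:MD-one-step} to each summand --- pathwise in the increments of steps $1,\dots,k-1$, so that the relevant ``$X_0$'' is $X_{(k-1)h}$ --- and letting $\eta\searrow0$ termwise (a finite sum, introducing no analytic difficulty beyond the single‑step limit already justified in \S\ref{scn:warmup-m-lmc} and \S\ref{scn:applications}) reproduces the stated formulas for $\Mdel\bar\Mu$ and $\log\Mdet(I+q\MD\bar\Mu)$, with the per‑step remainder $O(\beta^3 dh^3q^3)$ of Lemma~\ref{lem:MD-one-step} accumulating, with a uniform constant, to $O(\beta^3 dh^3q^3 N)$.

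For invertibility I would again use the triangular structure: in step‑block coordinates the finite transform sends the input blocks of steps $1,\dots,k$ to the output block of step $k$, and for fixed earlier input blocks the induced map on step $k$'s block is exactly the single‑step transform, which Lemma~\ref{lem:MD-one-step} (through the properness estimate~\eqref{eq:aligned}) shows to be a proper bijection once $h\lesssim\tfrac{1}{\beta q}$; inverting block by block yields a bijection of $\R^{d\times Nh/\eta}$, so $\operatorname{id}+q\Mu^\eta$ is onto and~\eqref{eq:change-of-measure-guaranteed} holds with equality. Moreover each factor $\Mdet(I+q(\MD\bar\Mu)_{[k,k]})$ lies in a fixed interval around $1$ bounded away from zero --- read off the explicit single‑step expression, whose written‑out terms are $O(d\beta^2h^2q^2)$ and whose remainder is $O(\beta^3 dh^3q^3)$, all small for $h\lesssim\tfrac{1}{\beta q}$ --- so the product over the $N$ steps is almost surely bounded above and bounded away from zero, giving almost‑sure local invertibility. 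I expect the only genuine work to be the bookkeeping behind (i) and (ii): confirming that the within‑step Malliavin derivatives really do ignore $X_{(k-1)h}$, and that the sub‑diagonal blocks neither enter $\det$ or $\tr$ nor spoil properness; no ingredient beyond \S\ref{scn:warmup-m-lmc} is required.
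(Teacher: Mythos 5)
Your proposal is correct and takes essentially the same approach as the paper: reduce to the block lower-triangular structure of $\MD\Mu^\eta$ induced by the causal dependence on Brownian increments, factor the Carleman--Fredholm determinant over the $N$ diagonal blocks (each of which is the one-step matrix of Lemma~\ref{lem:MD-one-step} started from $X_{(k-1)h}$), split $\Mdel\Mu^\eta$ by linearity, and pass $\eta\searrow 0$ termwise. Your block-by-block bijectivity argument for invertibility is a slight elaboration of what the paper asserts without detail, but introduces no new ideas beyond the one-step properness estimate.
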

\begin{proof}
    The computation of the Skorohod adjoint is simple, as each step can be handled as in the one-step case via linearity.
    
    Consider the same approximation for $X_{[0, T]}$ as in Lemma~\ref{lem:MD-one-step} with some parameter $\eta$. If we consider the Carleman--Fredholm determinant, it is easy to see that for $i\eta, j\eta \in [kh, (k+1)h)$ for some $k$, we will have a very similar computation to that in Lemma~\ref{lem:MD-one-step} for $\MD_{i\eta} X_{j\eta}$. On the other hand, for $i\eta \in [k'h, (k'+1)h)$ and $j\eta \in [kh, (k+1)h)$ and $k' > k$, then $\MD_{i\eta} X_{j\eta} = 0$, $\MD_{i\eta} X_k^+ = 0$. Thus, for the matrix of Malliavin derivative coefficients $\MD \Mu^\eta$, when approximating $X_t$ with its Euler discretization $\hat X_t^\eta$ for some $\eta$ dividing $h$, has the form
    \begin{align*}
        I + q\MD \Mu^\eta = \begin{bmatrix}
            I + q(\MD \Mu^\eta)_{[1,1]} & 0 & 0 & \hdots & 0 \\
            q(\MD \Mu^\eta)_{[2,1]} & I+ q({\MD \Mu^\eta})_{[2,2]} & 0 & \hdots & 0 \\
            q({\MD \Mu^\eta})_{[3,1]} & q({\MD \Mu^\eta})_{[3,2]} & I+q({\MD \Mu^\eta})_{[3,3]}& \hdots & 0 \\
            \vdots & \vdots & \vdots & \ddots & \vdots \\
            q({\MD \Mu^\eta})_{[N,1]} & q({\MD \Mu^\eta})_{[N,2]}  & q(\MD \Mu^\eta)_{[N, 3]} & \hdots & I + q(\MD \Mu^\eta)_{[N, N]}
        \end{bmatrix}\,.
    \end{align*}
    where each $({\MD \Mu^\eta})_{[i,j]} \in \R^{d\times d \times h/\eta \times h/\eta}$ is similar to that of Lemma~\ref{lem:MD-one-step}, and the coefficients $\{({\MD \Mu^\eta})_{[i,j]}: i \in [N], i > j\}$ are irrelevant both in computing the trace and the determinant.

    We then have that the determinant of the total kernel is given by the product of determinants of the kernels on the diagonal above, i.e., the determinant of the Malliavin derivative matrix restricted to the interval $[(k-1)h, kh) \times [(k-1)h, kh)$. Each determinant can be computed as in Lemma~\ref{lem:MD-one-step}, and the logarithm of the total determinant is simply given by the sum. This is stable in the limit $\eta \searrow 0$, so we can easily conclude.
\end{proof}

\paragraph{Part (D): Computation of the $\Renyi_q$ divergence.}
At last, we can consider the $\Renyi_q$ divergence of this process, given in the lemma below. Before this, one notes that we have computed the objects needed for $\frac{\D \mathbf Q}{\D \mathbf P}$. This is no issue, however, as we simply invert this for the Radon-Nikodym of the reverse process, noting that the necessary regularity properties continue to be satisfied. 

\begin{lemma}\label{lem:renyi-md}
If $\mathbf Q, \mathbf P$ are the laws of $(\tilde B_t)_{t \in [0, Nh)}$, $(B_t)_{t \in [0, Nh)}$ respectively, under Assumption~\ref{as:regularity}, for $q \geq 2$, $h \lesssim \frac{1}{\beta q}$,
\begin{align*}
    \Renyi_q(\mathbf P \mmid \mathbf Q) &\lesssim \beta^2 dh^2 q N + \frac{1}{q} \max_{k \in [N]} \log \E_{\mathbf P} \exp\Bigl(\beta^2 q^2\int_{(k-1)h}^{kh} \norm{X_{t} - X_k^+}^2\,\D t \Bigr)\,.
\end{align*}
\end{lemma}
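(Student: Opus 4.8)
The plan is to feed the Radon--Nikodym identity of Theorem~\ref{thm:anticipating-girsanov} together with the explicit formulas of Lemma~\ref{lem:MD-multistep} into the definition of the R\'enyi divergence, and then estimate the resulting exponential integral. Since we want $\Renyi_q(\mathbf P \mmid \mathbf Q)$ and the theorem computes $\D\mathbf Q/\D\mathbf P$, I first invert: because $\msf T$ (the transform of \eqref{eq:M-LMC}) is bijective whenever $h\lesssim(\beta q)^{-1}$ (Lemma~\ref{lem:MD-multistep}), $\mathbf P$ and $\mathbf Q$ are equivalent and $\frac{\D\mathbf P}{\D\mathbf Q}=(\frac{\D\mathbf Q}{\D\mathbf P})^{-1}$ $\mathbf P$-a.s., as already noted in the text. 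Hence, writing $\Renyi_q(\mathbf P\mmid\mathbf Q)=\tfrac1{q-1}\log\E_{\mathbf P}[(\D\mathbf P/\D\mathbf Q)^{q-1}]$,
\begin{align*}
\Renyi_q(\mathbf P \mmid \mathbf Q) &= \frac{1}{q-1}\log\E_{\mathbf P}\Bigl[\bigl(\tfrac{\D\mathbf P}{\D\mathbf Q}\bigr)^{q-1}\Bigr] \\
&= \frac{1}{q-1}\log\E_{\mathbf P}\Bigl[\abs{\Mdet(I+\MD\bar\Mu)}^{-(q-1)}\exp\Bigl((q-1)\,\Mdel\bar\Mu + \tfrac{q-1}{2}\int_0^{Nh}\norm{\bar\Mu_t}^2\,\D t\Bigr)\Bigr]\,.
\end{align*}

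Next I split the exponent into an almost surely bounded ``deterministic'' part and a ``stochastic'' part. Using Lemma~\ref{lem:MD-multistep} with its parameter set to $1$, and $-\beta I\preceq\nabla^2 V\preceq\beta I$, $\tau_k\le h$, $\beta h\lesssim1$, the Carleman--Fredholm factor satisfies $|\log\abs{\Mdet(I+\MD\bar\Mu)}|\lesssim\beta^2 dh^2 N+\beta^3 dh^3 N\lesssim\beta^2 dh^2 N$ (the $O(\beta^3 dh^3 q^3 N)$ error being absorbed since $h\lesssim(\beta q)^{-1}$), and the ``trace'' terms $\sum_k\int t\,\tr(\nabla^2 V(X_t)\nabla^2 V(X^+_k))\,\D t$ appearing in $\Mdel\bar\Mu$ are likewise a.s.\ of order $\beta^2 dh^2 N$; these contribute the first term $\beta^2 dh^2 qN$ of the claim after multiplying by $q-1$ and taking $\tfrac1{q-1}\log$. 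The one delicate point already at this stage is the Skorohod-correction piece $\sum_k\tfrac{\tau_k}{\sqrt2}\tr\nabla^2 V(X^+_k)$ of $\Mdel\bar\Mu$, which is only $O(\beta d hN)$ pointwise --- too large to absorb into $\beta^2 dh^2 qN$ under $h\lesssim(\beta q)^{-1}$. This term cannot be bounded pointwise; it must be cancelled against the Gaussian cross-term produced when one evaluates the exponential moment of the anticipating increment $-\tfrac1{\sqrt2}\inner{\nabla V(X^+_k),B_{kh}-B_{(k-1)h}}$, in which $\nabla V(X^+_k)$ is correlated with the sub-increment $B_{(k-1)h+\tau_k}-B_{(k-1)h}$ and contributes exactly $\tfrac{\tau_k}{\sqrt2}\tr\nabla^2 V(X^+_k)$ via Gaussian integration by parts. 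So the trace term and the anticipating increment have to be processed together, not separately.

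For the remaining ``stochastic + quadratic'' contribution I would exploit the factorization $\frac{\D\mathbf Q}{\D\mathbf P}=\prod_{k=1}^N\mathcal E_k$ into per-step Girsanov factors --- the determinant being block-triangular over the time intervals and $\Mdel$, $\int\norm{\bar\Mu_t}^2\,\D t$ additive over steps, exactly as in the proof of Lemma~\ref{lem:MD-multistep} --- where each $\mathcal E_k$ obeys the one-step density identity $\E_{\mathbf P}[\mathcal E_k\mid\ms F_{(k-1)h}]=1$ coming from the single-step anticipating Girsanov computation of \S\ref{scn:warmup-m-lmc}. Peeling off steps $k=N,N-1,\dots$ by the tower property, for each step I complete the square: write $(q-1)(\Mdel\bar\Mu)_k+\tfrac{q-1}{2}\int_k\norm{\bar\Mu_t}^2\,\D t$ as (the logarithm of a one-step exponential supermartingale, whose conditional $\mathbf P$-expectation is controlled by the step-$k$ density property) plus $\tfrac{q(q-1)}{2}\int_k\norm{\bar\Mu_t}^2\,\D t$, apply \Holder to decouple the two factors, and use $\norm{\bar\Mu_t}^2=\tfrac12\norm{\nabla V(X_t)-\nabla V(X^+_k)}^2\le\tfrac{\beta^2}{2}\norm{X_t-X^+_k}^2$ to turn the quadratic into $\beta^2 q^2\int_k\norm{X_t-X^+_k}^2\,\D t$ (the choice of \Holder exponent $\asymp q$ being what produces the $\tfrac1q$ prefactor). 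The conditional-density property is what lets the step-$k$ residual be absorbed without accumulating over $k$, so that only the worst step survives and one ends up with $\max_{k\in[N]}\log\E_{\mathbf P}\exp(\beta^2 q^2\int_{(k-1)h}^{kh}\norm{X_t-X^+_k}^2\,\D t)$ rather than a sum.

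The main obstacle is that the drift is genuinely non-adapted, so one cannot appeal to Novikov's condition or ordinary exponential-martingale bounds directly; the bookkeeping of the Carleman--Fredholm determinant and the Skorohod correction must be carried out precisely enough that the dangerous $O(\beta dhN)$ trace term cancels exactly and that the per-step exponential moments recombine into a maximum with the stated $q$-dependence. A secondary technical point is that all of this is first done for the elementary (piecewise-constant) discretizations $\bar\Mu^\eta$, where everything is finite-dimensional calculus, and then one passes to $\eta\searrow0$ using the lower semicontinuity of $\Renyi_q$ in both arguments together with the weak convergence of the elementary approximations, exactly as indicated after Theorem~\ref{thm:anticipating-girsanov}.
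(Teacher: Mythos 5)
Your proposal isolates the right ingredients — inverting the anticipating Girsanov density, controlling the Carleman--Fredholm determinant pointwise, invoking a Dol\'eans--Dade-type unit-expectation identity, and finishing with \Holder/AM--GM — but the organization differs from the paper's in a way that creates extra work for you. The paper applies a single \emph{global} Cauchy--Schwarz to $\E_{\mathbf P}[(\D\mathbf P/\D\mathbf Q)^{q-1}]$, pairing a factor that is exactly the Radon--Nikodym derivative for the rescaled drift $-2(q-1)\bar\Mu$ (which has $\mathbf P$-expectation one by Corollary~\ref{cor:doleans-dade}) with a residual factor involving only a ratio of Carleman--Fredholm determinants and $\exp(C q^2\norm{\bar\Mu}^2_\Hb)$. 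Because $\Mdel\bar\Mu$ is absorbed \emph{wholesale} into the Dol\'eans--Dade factor, the term $\sum_k\tfrac{\tau_k}{\sqrt 2}\tr\nabla^2 V(X_k^+)$ that you flag as dangerous never appears in the analysis: there is no need to cancel it against a Gaussian integration-by-parts contribution, and indeed no need to expand $\Mdel\bar\Mu$ into individual summands at all. Your concern is a symptom of attempting to bound $\Mdel\bar\Mu$ term-by-term, which is a different (and harder) decomposition than the paper's. Your fallback — peeling steps by the tower property with a per-step conditional Dol\'eans--Dade — is plausible but heavier: the conditional residual $\E[\,\cdot\mid\ms F_{(k-1)h}]$ is not a.s.\ bounded, so you would have to iterate \Holder across steps, and the exponents compound; it is not clear this lands cleanly on the stated constants without careful bookkeeping. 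Finally, a small misattribution: the $1/q$ prefactor does not come from choosing a \Holder exponent $\asymp q$ — it is just the $1/(q-1)$ from the R\'enyi definition (note $q-1\gtrsim q$ for $q\ge 2$); the \Holder/AM--GM exponent used to convert $\sum_k$ into $\max_k$ is $\asymp N$, which is why the paper's proof acquires a factor of $N$ inside the exponential (the statement of Lemma~\ref{lem:renyi-md} appears to have dropped this $N$; the displayed final line of the paper's proof retains it).
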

\begin{proof}
    Using Theorem~\ref{thm:anticipating-girsanov} and Lemma~\ref{lem:MD-multistep}, with our step-size condition being necessary for existence, and applying Cauchy--Schwarz,
    \begin{align*}
        \Renyi_q(\mathbf P \mmid \mathbf Q) &= \frac{1}{q-1}\log\E_{\mathbf Q} \Bigl(\frac{\D \mathbf P}{\D \mathbf Q}\Bigr)^q = \frac{1}{q-1}\log\E_{\mathbf P} \Bigl(\frac{\D \mathbf P}{\D \mathbf Q}\Bigr)^{q-1}\\
        &\leq \frac{1}{q-1} \log \E_{\bf P} \Bigl[\Bigl\lvert\Mdet\Bigl(I + \MD \bar \Mu \Bigr)\Bigr\rvert^{-(q-1)} \exp\Bigl(q-1) \Mdel \bar\Mu + \frac{(q-1)^2}{2} \norm{\bar \Mu}^2_{\Hb} \Bigr) \Bigr] \\
        &\leq \frac{1}{q-1} \log \E_{\bf P} \Bigl[\Bigl\lvert\Mdet\Bigl(I -2(q-1) \MD \bar\Mu \Bigr)\Bigr\rvert \exp\Bigl(2(q-1) \Mdel \bar \Mu - 2(q-1)^2 \norm{\bar \Mu}^2_{\Hb} \Bigr) \Bigr] \\
        &\qquad +\frac{1}{q-1} \log \E_{\bf P} \biggl[\exp\Bigl(\bigl((q-1) + 2(q-1)^2\bigr)\norm{\bar\Mu}^2_{\Hb} \Bigr) \frac{\lvert\Mdet(I + \MD \bar \Mu)\rvert^{-2(q-1)}}{\lvert\Mdet(I - 2(q-1) \MD \bar \Mu)\rvert}\biggr]\,.
    \end{align*}
    Via Corollary~\ref{cor:doleans-dade}, the first term has unit expectation if we consider the process $-2(q-1) \bar\Mu$. Such a process is still invertible for $h \lesssim \frac{1}{\beta q}$ sufficiently small as its determinant can be found from Lemma~\ref{lem:MD-multistep}.
    Now, we note that we had bounded
    \begin{align*}
         \log \lvert\Mdet (I + r \MD \bar \Mu )\rvert &\lesssim -\sqrt 2r^2\sum_{k=1}^N \int_{(k-1)h }^{(k-1)h +\tau_k} t \tr(\nabla^2 V(X_t) \nabla^2 V(X_k^+))\,\D t \\
         &\qquad + \sum_{k=1}^N \frac{\tau_k^2 r^2}{\sqrt{2}} \tr((\nabla^2 V(X^+_k))^2) + O(\beta^3 dh^3 \abs{r}^3 N) \lesssim \beta^2 d h^2 r^2 N\,,
    \end{align*}
    almost surely, for $h \lesssim \frac{1}{\beta q}$, and furthermore this holds for all $r \in \R$. If we consider the cases where $r = 1$ and $-(q-1)$ respectively, then we can bound the determinant by
    \begin{align*}
        \log \Bigl[\frac{\lvert\Mdet (I -2(q-1) \MD \bar \Mu )\rvert^{-1}}{\lvert\Mdet (I +\MD \bar \Mu)\rvert^{2(q-1)}}\Bigr] \lesssim \beta^2 dh^2 q^2 N\,.
    \end{align*}
    almost surely.
    Now, almost surely we can write
    \begin{align*}
        \lim_{\eta \searrow 0} \norm{\bar \Mu}_{\Hb}^2 \leq \sum_{k=1}^N \int_{(k-1)h}^{kh} \frac{\beta^2}{2} \norm{X_t - X_k^+}^2 \, \D t\,.
    \end{align*}
    It follows that as $\tau_k \leq h$, and for $h$ small enough, via the AM--GM inequality,
    \begin{align*}
        \Renyi_q(\mathbf P \mmid \mathbf Q) &\lesssim \beta^2 dh^2 q N +  \frac{1}{q-1} \log \E_{\mathbf P} \exp\Bigl(\sum_{k=1}^N \beta^2 q^2\int_{(k-1)h}^{kh} \norm{X_{t} - X_k^+}^2\,\D t \Bigr) \\
        &\lesssim \beta^2 dh^2 qN + \frac{1}{q} \max_{k \in [N]} \log \E_{\mathbf P} \exp\Bigl(\beta^2 q^2 N\int_{(k-1)h}^{kh} \norm{X_{t} - X_k^+}^2\,\D t \Bigr) \,.
    \end{align*}
\end{proof}

Finally, we can state a sampling corollary, once we analyze the expectation in this result.
\begin{corollary}\label{cor:ormd-error}
    If $\mathbf Q, \mathbf P$ are the laws under which $(X_t)_{t \in [0, T)}$ follows~\eqref{eq:langevin_diff} and~\eqref{eq:M-LMC} respectively, it follows that if the initial distribution is $X_0 \sim \mu_0$, under Assumption~\ref{as:regularity}, and then for $q \geq 2$
    \begin{align*}
        \Renyi_q(\mathbf P \mmid \mathbf Q) \lesssim \beta^2 d h^2 qN + \frac{1}{q}\sup_{k \leq N} \log \E_{\mbb P}\Bigl[\exp\Bigl(\beta^2 h^3 q^2 N\norm{\nabla V(X_{(k-1)h})}^2\Bigr)\Bigr]\,.
    \end{align*}
    In particular, $T \deq Nh$ and $h = \Otilde( \frac{1}{\beta^{3/2} \sqrt{qT}} \wedge \frac{d}{\beta \Renyi_3(\mu_0 \mmid \pi)})$, then
    \begin{align*}
        \Renyi_q(\mu_{Nh} \mmid \tilde \mu_{Nh}) \lesssim \beta^2 d h^2 q N\,.
    \end{align*}
    Finally, suppose $\pi$ satisfies~\eqref{eq:lsi} with constant $1/\alpha$ and we define $\kappa \deq \beta/\alpha$, assume that $\Renyi_q(\mu_0 \mmid \pi) \lesssim d \log \kappa$. This implies that, with $h = \widetilde{\Theta}(\frac{\varepsilon^2}{\beta \kappa d q})$ for $\varepsilon$ sufficiently small, we can output a distribution $\hat \pi$ with $\Renyi_q(\hat \pi \mmid \pi) \lesssim \varepsilon^2$, using the following number of gradient queries,
    \begin{align*}
        N = \widetilde{\Theta}\Bigl(\frac{\kappa^2 d q^2}{\varepsilon^2} \Bigr)\,.
    \end{align*}
\end{corollary}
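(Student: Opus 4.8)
The plan is to push Lemma~\ref{lem:renyi-md} through two further reductions and then combine the result with the R\'enyi convergence of \eqref{eq:langevin_diff}. For the first display, I would replace the quantity $\int_{(k-1)h}^{kh}\norm{X_t-X_k^+}^2\,\D t$ appearing in Lemma~\ref{lem:renyi-md} by a term of order $h^3\norm{\nabla V(X_{(k-1)h})}^2$ plus purely Gaussian fluctuations. Using the explicit \eqref{eq:M-LMC} recursion, $X_t-X_k^+ = \int_{kh}^{kh+\tau_k}\nabla V(X_{kh})\,\D s-\int_{kh}^t\nabla V(X_k^+)\,\D s+\sqrt 2\,(B_t-B_{kh+\tau_k})$, so the drift part has size $O(h^2\norm{\nabla V(X_{kh})}^2+h^2\norm{\nabla V(X_k^+)}^2)$, and since $\nabla V$ is $\beta$-Lipschitz and $X_k^+=X_{kh}-\tau_k\nabla V(X_{kh})+\sqrt 2\,(B_{kh+\tau_k}-B_{kh})$, one has $\norm{\nabla V(X_k^+)}\lesssim\norm{\nabla V(X_{kh})}+\beta\norm{B_{kh+\tau_k}-B_{kh}}$ for $h\lesssim\beta^{-1}$. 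Integrating over the step and taking the exponential moment, the Brownian pieces are sub-exponential on a window of length $h$ with mean $\asymp dh^2$, so $\tfrac1q\log\E_{\mathbf P}\exp(\,\cdot\,)$ of those pieces contributes only a further $O(\beta^2 dh^2 qN)$ once $h$ is small; what is left in the exponent is exactly $\beta^2 h^3 q^2 N\norm{\nabla V(X_{(k-1)h})}^2$, which gives the first display.

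The crux is controlling $\sup_{k\le N}\log\E_{\mathbf P}\exp(\lambda\norm{\nabla V(X_{(k-1)h})}^2)$ with $\lambda\deq\beta^2 h^3 q^2 N$, where under $\mathbf P$ the iterates $X_{kh}$ follow \eqref{eq:M-LMC}. I would transfer this expectation onto $\pi$ in two Cauchy--Schwarz steps: first $\E_{\mathbf P}\exp(\lambda\norm{\nabla V(X_{kh})}^2)\le(\E_{\mathbf Q}\exp(2\lambda\norm{\nabla V(X_{kh})}^2))^{1/2}\exp(\tfrac12\Renyi_2(\mathbf P\mmid\mathbf Q))$, passing from the discretization law to the \eqref{eq:langevin_diff} marginal $\tilde\mu_{kh}$; then $\E_{\tilde\mu_{kh}}\exp(2\lambda\norm{\nabla V}^2)\le(\E_\pi\exp(4\lambda\norm{\nabla V}^2))^{1/2}\exp(\tfrac12\Renyi_2(\tilde\mu_{kh}\mmid\pi))$, with $\Renyi_2(\tilde\mu_{kh}\mmid\pi)\le\Renyi_3(\mu_0\mmid\pi)$ by monotonicity of the R\'enyi divergence along the Langevin semigroup and in the order. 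The base estimate $\E_\pi\exp(c\norm{\nabla V}^2)\lesssim\exp(O(c\beta d))$, valid for $c$ below a threshold (coming from the regularity assumption, and sharper under \eqref{eq:lsi}), follows from $\E_\pi\norm{\nabla V}^2=\E_\pi\Delta V\le\beta d$ together with sub-exponential concentration of the $\beta$-Lipschitz map $x\mapsto\norm{\nabla V(x)}$. The appearance of $\Renyi_2(\mathbf P\mmid\mathbf Q)$ makes the argument circular, so I would close it by continuation in $N$: letting $R(N)$ denote the path-space R\'enyi divergence over $[0,Nh)$ and $M(N)$ the supremum above, Lemma~\ref{lem:renyi-md} and the reduction give $R(N)\lesssim\beta^2 dh^2 qN+\tfrac1q M(N)$ while the transfer gives $M(N)\lesssim\beta^2 dh^2 q^2 N+\Renyi_3(\mu_0\mmid\pi)+R(N)$, and for $h$ below the stated thresholds these combine, starting from $R(0)=0$, to yield $R(N)\lesssim\beta^2 dh^2 qN$. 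The constraint $h\lesssim\beta^{-3/2}(qT)^{-1/2}$ is precisely what keeps $\lambda$ inside the admissible range, while $h\lesssim d/(\beta\Renyi_3(\mu_0\mmid\pi))$ is what absorbs the initialization term into $\beta^2 dh^2 qN$.

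The middle conclusion then follows from the data-processing inequality $\Renyi_q(\mu_{Nh}\mmid\tilde\mu_{Nh})\le R(N)$. For the final sampling bound, under \eqref{eq:lsi} with constant $1/\alpha$ the continuous process contracts R\'enyi divergence, so by \S\ref{scn:cvg} one has $\Renyi_{2q}(\tilde\mu_T\mmid\pi)\lesssim\varepsilon^2$ once $T$ exceeds the corresponding R\'enyi mixing time, which under the warm start $\Renyi_q(\mu_0\mmid\pi)\lesssim d\log\kappa$ is $\widetilde O(q/\alpha)$ (this hypothesis also yields $\Renyi_3(\mu_0\mmid\pi)=\Otilde(d)$, so the two step-size constraints above reduce to $h=\Otilde(1/\beta)$ and are harmless). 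Applying the weak triangle inequality (Lemma~\ref{lem:renyi-triangle}) to $\Renyi_q(\mu_{Nh}\mmid\pi)$ with the ingredients $\Renyi_{2q}(\mu_{Nh}\mmid\tilde\mu_{Nh})\lesssim\beta^2 dh^2 qN$ (the middle bound invoked at a doubled order) and $\Renyi_{2q}(\tilde\mu_{Nh}\mmid\pi)\lesssim\varepsilon^2$ gives $\Renyi_q(\mu_{Nh}\mmid\pi)\lesssim\varepsilon^2$ provided $\beta^2 dh^2 qN=\beta^2 dhqT\lesssim\varepsilon^2$. Choosing $h$ to saturate this and $N=T/h$ then pins down the stated step-size $h=\widetilde\Theta(\varepsilon^2/(\beta\kappa dq))$ and query count $N=\widetilde\Theta(\kappa^2 dq^2/\varepsilon^2)$.

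I expect the main obstacle to be the second paragraph: both the circularity, which forces the precise choice of step-size thresholds so the continuation argument in $N$ closes, and the sub-exponential tail control of $\norm{\nabla V}$ under $\pi$ (and, via semigroup monotonicity, along the Langevin flow), which is exactly where the regularity assumption — and, for the quantitative complexity, the \eqref{eq:lsi} — enters.
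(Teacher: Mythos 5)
Your proposal is correct and follows essentially the same route as the paper: reduce $\int_{(k-1)h}^{kh}\norm{X_t-X_k^+}^2$ to a Brownian supremum plus $h^3\norm{\nabla V(X_{(k-1)h})}^2$, transfer the resulting exponential moment from the algorithm law to $\pi$, close the circularity under the step-size constraint, and finish with the weak triangle inequality and the R\'enyi contraction from Lemma~\ref{lem:overdamped-cvg}. The paper packages the transfer step through Lemmas~\ref{lem:subgsn-score} and~\ref{lem:renyi-change-measure} rather than the explicit two-stage Cauchy--Schwarz you write, and absorbs the self-referential term directly instead of phrasing it as a continuation in $N$, but these are presentational differences, not mathematical ones.

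One small inaccuracy worth flagging: you attribute the base tail bound $\E_\pi\exp(c\norm{\nabla V}^2)\lesssim\exp(O(c\beta d))$ to "sub-exponential concentration of the $\beta$-Lipschitz map $x\mapsto\norm{\nabla V(x)}$", and suggest it is "sharper under~\eqref{eq:lsi}." Under Assumption~\ref{as:regularity} alone, $\pi$ need satisfy no isoperimetric inequality, so generic Lipschitz concentration is unavailable. The $\sqrt\beta$-subgaussianity of $\nabla V$ under $\pi$ (Lemma~\ref{lem:subgsn-score}) is instead a consequence of the translation/integration-by-parts trick specific to the score function, requiring only $\beta$-Lipschitzness of $\nabla V$, and~\eqref{eq:lsi} plays no role there. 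The~\eqref{eq:lsi} hypothesis enters only through the continuous-time R\'enyi decay (Lemma~\ref{lem:overdamped-cvg}) when setting the time horizon $T$. This does not affect the validity of your argument, since the conclusion you need from the base estimate is still correct.
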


\begin{remark}
    The assumption $\Renyi_q(\mu_0 \mmid \pi) \lesssim d \log \kappa$ is reasonable in light of~\cite[\S D]{zhang2023improved}. The corollaries above fail to witness the beneficial properties of a randomized midpoint scheme; namely for $\Renyi_2$, it is able to achieve error on the order of $\Otilde(\kappa d^{1/2}/\varepsilon)$ rather than $\Otilde(\kappa^2 d/\varepsilon^2)$ as claimed above. This is because our analysis must first fix the endpoint when computing the paths, and then average on the outside. This essentially forces us to accrue a quantity similar to the strong error in~\cite{scr3}, without any possibility of improved weak error.
\end{remark}

\subsection{DM--ULMC}\label{scn:dm-ulmc}
Before proceeding, let us define the following helpful integrals:
\begin{align}
    \eia(s,t) &\deq \exp(-\gamma(t-s))\,, \label{eq:first-int}\\
    \eib(s,t) &\deq \frac{1}{\gamma}\Bigl(1-\exp(-\gamma(t-s))\Bigr) = \int_s^t \eia(s,r) \, \D r\,, \label{eq:second-int}\\
    \eic(s,t) &\deq \frac{1}{\gamma}\Bigl((t-s) + \frac{1}{\gamma} \bigl(\exp(-\gamma(t-s)) -1\bigr)  \Bigr) = \int_s^t \eib(s,r) \, \D r \label{eq:third-int}\,.
\end{align}
We now consider a new discretization given by the following iterates, for a fixed double sequence of interpolant times $((\tau_1^-, \tau_1^+), \dotsc, (\tau_N^-, \tau_N^+))$ all living in $[0, h) \times [0, h)$:
\begin{align}\label{eq:DM--ULMC-marginal}
\begin{aligned}
    X_{(k+1)h} &= X_{kh} + \eib(0, h) P_{kh} -  \eic(0, h) \nabla V(X^-_k) + \sqrt{2\gamma} \int_{kh}^{(k+1)h} \eib(s, (k+1)h) \, \D B_s\,,\\
    P_{(k+1)h} &= \eia(0, h) P_{kh} - \eib(0, h) \nabla V(X^+_k) + \sqrt{2\gamma} \int_{kh}^{(k+1)h} \eia(s,(k+1)h) \, \D B_s\,,
\end{aligned}
\end{align}
with the interpolants
\begin{align*}
    X^-_k &= X_{kh} + \eib(0, \tau_{k}^-) P_{kh} - \eic(0, \tau_k^-) \nabla V(X_{kh}) + \sqrt{2\gamma}\int_{kh}^{kh + \tau_k^-} \eib(t, kh + \tau_k^-) \, \D B_t\,, \\
    X^+_k &=X_{kh} + \eib(0, \tau_k^+) P_{kh} - \eic(0, \tau_k^+) \nabla V(X_{kh}) + \sqrt{2\gamma} \int_{kh}^{kh + \tau_k^+} \eib(t, kh + \tau_k^+) \, \D B_t\,.
\end{align*}
We use the notation $X_k^+, X_k^-$ because for both deterministic and random algorithms $\tau_k^+ > \tau_k^-$ will be chosen, almost surely or on average respectively. For instance, in the deterministic algorithm, we take $\tau_k^+ = \frac{h}{2}$, while $\tau_k^- = \frac{h}{3}$ for all $k \in [N]$. We note that this algorithm comes from prior work on midpoint discretizations for~\eqref{eq:ULD}, which see improved error either under randomization~\cite{shen2019randomized, he2020ergodicity, yu2023langevin, scr3, scr4} or additional smoothness~\cite{sanz2021wasserstein, paulin2024correction}. 

However, like~\cite{scr4}, when proving results in $\KL$ divergence, it will be necessary to introduce a second midpoint to improve the error in the position coordinate. In the case of random midpoints, we note that it is necessary in order to ensure that the weak errors of both position and momentum are suitably small. See~\cite{scr4} for more justification. In the case of deterministic midpoints, we note that the expansion in Lemma~\ref{lem:dmd-local-error} requires cancellation of the lowest order term, which occurs only for the specific choice of $\tau_k^-, \tau_k^+$ given above.

While the algorithm above only fixes the marginal distributions at each $\{kh\}_{k \in [N]}$, 
we define the following interpolation for $t \in [kh, (k+1)h)$,
\begin{align}\label{eq:DM--ULMC-multistep}\tag{DM--ULMC}
\begin{aligned}
    X_t &\deq X_{kh} + \int_{kh}^t P_s \, \D s \\
    P_t &\deq P_{kh} - \gamma \int_{kh}^t P_s \, \D s - \int_{kh}^t \msf G_s^\opt \, \D s + \sqrt{2\gamma}\int_{kh}^t \D B_s\,,
\end{aligned}    
\end{align}
where now
\begin{align*}
    \msf G_t^\opt = \nabla V(X_t) - \eia(t, (k+1)h) \lambda_{k,1} - \eib(t, (k+1)h) \lambda_{k,2}
\end{align*}
and
\begin{align*}
    \msf G_k^\msp \deq \eib(0, h) \nabla V(X_k^+)\,, \qquad \msf G_k^\msx \deq \eic(0, h) \nabla V(X_k^-)\,,
\end{align*}
with the four coefficients,
\begin{align*}
    \sigma_{11} &= \int_0^h \eia(t, h)^2 \, \D t\,, \quad \sigma_{21} = \sigma_{12} = \int_0^h \eia(t, h)\eib(t,h)\, \D t \\
    \sigma_{22} &= \int_0^h \eib(t,h)^2 \, \D t\,, \quad 
    \Delta_\sigma = \sigma_{11} \sigma_{22} - \sigma_{21}^2\,.
\end{align*}
and
\begin{align*}
    \lambda_{k,1} &= \frac{\sigma_{22}\Bigl[\int_{kh}^{(k+1)h} \eia(t, (k+1)h) \nabla V(X_t) \, \D t - \msf G_k^\msp\Bigr] - \sigma_{21}\Bigl[\int_{kh}^{(k+1)h} \eib(t, (k+1)h) \nabla V(X_t) \, \D t - \msf G_k^\msx \Bigr]}{\Delta_\sigma} \\
    \lambda_{k,2} &= \frac{-\sigma_{12}\Bigl[\int_{kh}^{(k+1)h} \eia(t, (k+1)h) \nabla V(X_t) \, \D t - \msf G_k^\msp \Bigr] + \sigma_{11} \Bigl[\int_{kh}^{(k+1)h} \eib(t, (k+1)h) \nabla V(X_t) \, \D t - \msf G_k^\msx\Bigr]}{\Delta_\sigma}\,,
\end{align*}

As this algorithm uses two midpoints, we call it the underdamped double (deterministic) midpoint discretization, or \textbf{DM--ULMC} for short. We also give an equivalent formulation of~\eqref{eq:underdamped_langevin_diff} for which comparison is easier, for any $t \geq kh$:
\begin{align}\label{eq:ULD}\tag{ULD}
    X_t &\deq X_{kh} + \eib(0,t) P_{kh} - \int_{kh}^t \eib(s, t) \nabla V(X_s) \, \D s + \sqrt{2\gamma} \int_{kh}^{t} \eib(s, t) \, \D \tilde B_s\,, \\
    P_t &\deq \eia(0,t) P_{kh} - \int_{kh}^t \eia(s, t) \nabla V(X_s) \, \D s + \sqrt{2\gamma} \int_{kh}^{t} \eia(s, t) \, \D \tilde B_s\,,
\end{align}
using the fact that
\begin{align*}
    \int_0^t \int_0^s \eia(r, s) \, \D \tilde B_r \, \D s = \int_0^t \eib(s, t) \, \D \tilde B_s\,.
\end{align*}
Here, note that we want $(X_t, P_t)_{t \in [0, T)}$ to solve~\eqref{eq:DM--ULMC-multistep} for $(B_t)_{t \in [0, T)}$, and~\eqref{eq:ULD} for a different process $(\tilde B_t)_{t \in [0, T)}$. To find the transform which turns $(B_t)_{t \in [0, T)}$ into $(\tilde B_t)_{t \in [0, T)}$, we see it should be of the form~\eqref{eq:transformation-generic} with $\Mu$ defined as
\begin{align}\label{eq:difference-DM--ULMC-multistep}
    \bar \Mu_t = -\frac{1}{\sqrt{2\gamma}} \cdot \Bigl\{\eia(t, (k+1)h) \lambda_{k, 1} + \eib(t, (k+1) h) \lambda_{k,2} \Bigr\}\,,
\end{align}
for $t \in [kh, (k+1)h)$, for each $k \in [N]$.

\paragraph{Intuitions}
Consider the processes on $[0, h)$, for a single step of~\eqref{eq:DM--ULMC-multistep}. A na\"ive extension of~\eqref{eq:DM--ULMC-marginal}, obtained for instance by replacing $h$ with $t$ everywhere, is unfavourable when evaluated through Girsanov's theorem and comparing against~\eqref{eq:underdamped_langevin_diff}. Indeed, one will find that the corresponding path measure is mutually singular with respect to $\bf Q$, the measure of $\tilde B_{[0,h]}$. Instead, we seek a process with the correct marginal law, but which minimizes the error which would arise through Girsanov's theorem. This has the form
\begin{align*}
    \operatorname{objective}(\MG_{[0, h)}) \propto \E_{\mathbf P}[\norm{\MG_{[0,h)} - \nabla V(X)_{[0, h)}}^2_\Hb]\,.
\end{align*}
In particular, let $\msf G^\opt_{[0, h)}$ be the process in~\eqref{eq:DM--ULMC-multistep}
subject to the two constraints at time $h$ given by~\eqref{eq:DM--ULMC-marginal}. As our constraints must hold for every fixed realization of the Brownian path $B_{[0, h)}$, our objective is also most naturally formulated pointwise with respect to this path. 

Although $X_t$ itself depends implicitly on $\msf G_t^\opt$, we can reasonably expect that this dependence will generally be lower order (on the scale of $h^2$); furthermore, incorporating this dependence in the optimization leads to a solution that is difficult to express in closed form. Thus, when optimizing, we ignore the differential of $\nabla V(X_t)$ with respect to $\msf G_t^\opt$. This leads to a cost-minimizing velocity
\begin{align*}
    \msf G_t^\opt(X_{[0, h)}) = \nabla V(X_t) - \eia(t, h) \lambda_1(X_{[0, h)}) - \eib(t, h) \lambda_2(X_{[0, h)})\,,
\end{align*}
where the coefficients $\lambda_i$ are chosen to satisfy the marginal constraints.

Now, consider the marginal effective velocities $\MG^\msp, \MG^\msx$ which gives (up to a scaling constant) the amount of gradient we accumulate in the momentum and position respectively at $t = h$. 
Evidently, this causes
\begin{align*}
    \int_0^h \eia(t, h) \msf G_t^\opt(X_{[0, h)}) \, \D t = \msf G^\msp\,, \qquad \int_0^h \int_0^t \eia(s,t) \msf G_s^\opt(X_{[0, h)})  \, \D s \, \D t = \msf G^\msx\,,
\end{align*}
as we can view the matrix $\Sigma$ with coefficients $\sigma_{ij}$ as giving
\begin{align*}
    \int_0^h \eia(t,h) \msf G_t^\opt(X_{[0, h)}) \, \D t = \int_0^h \eia(t, h) \nabla V(X_t)\, \D t -\Bigl[\Sigma \Sigma^{-1} \operatorname{diff}\Bigr]_1\,, \\
    \int_0^h \int_0^t \eia(s,h) \msf G_s^\opt(X_{[0, h)}) \, \D s\, \D t = \int_0^h \eib(t, h) \nabla V(X_t)\, \D t -\Bigl[\Sigma \Sigma^{-1} \operatorname{diff} \Bigr]_2\,,
\end{align*}
where $\operatorname{diff}^\msp = \int_0^h \eia(t, h) \nabla V(X_t)\, \D t -\msf G^\msp$ and $\operatorname{diff}^\msx = \int_0^h \eib(t, h) \nabla V(X_t)\, \D t -\msf G^\msx$, and $\operatorname{diff} = [\operatorname{diff}^\msp, \operatorname{diff}^\msx]^\top$. We note that our choice of $\lambda_1, \lambda_2$ can easily be seen to satisfy both constraints, corresponding to $(\Sigma^{-1} \operatorname{diff})_1, (\Sigma^{-1} \operatorname{diff})_2$ respectively.

Note that the solution for $\MG_t^\opt$ is implicit due to the dependence of $X_t$ itself on $\msf G_t^\opt$. For a fixed realization of $B_{[0, h)}$ with $h$ sufficiently small, and any initial point $(X_0, P_0)$, this implicit equation will have a unique solution, which can be justified by Lemma~\ref{lem:anticipating-SDE-existence}. This implies the bijectivity property for The verification of this will follow from our Malliavin computations below, but again it suffices to see that the implicit part of the dynamics arising from $\msf G_t^\opt$ contributes only higher order terms, compared to the direct dependence through $B_t$. Thus, we can generally disregard the dependence of $\MG_t^\opt$ on $X_{[0, h)}$ in our calculations.

\paragraph{Parts (A)-(C): Computing the relevant Malliavin quantities for DM--ULMC.} It remains to compute the relevant quantities needed for Theorem~\ref{thm:anticipating-girsanov}. As this computation will be significantly more tedious than our earlier results in Lemma~\ref{lem:MD-one-step}, we will only present the leading terms in terms of their asymptotic orders. The full derivations can be found in our appendices.
\begin{lemma}\label{lem:DMD-one-step-uld}
    For a piecewise constant approximation $\Mu^\eta$ for $\bar \Mu$, one can find explicit expressions for $\Mdel \Mu^\eta$, and $\log \det (\msf I+q\MD \Mu^\eta)$ under Assumption~\ref{as:regularity} which are stable as $\eta \searrow 0$, and almost surely we have for integer $q$ and $h \lesssim \frac{1}{\beta^{1/2}q^{1/2}}$ that 
    \begin{align*}
        \log \Mdet (\msf I + q\MD \bar \Mu) = O({\beta^2 dh^4 q^2})\,.
    \end{align*}
    If we extend this to multiple steps, we find instead the bound
    \begin{align*}
        \lim_{\eta \searrow 0} \log \Mdet \Bigl(\msf I + q \MD \bar \Mu\Bigr) = O({\beta^2 dh^4 q^2 N})\,.
    \end{align*}
\end{lemma}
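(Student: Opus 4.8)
The plan is to run the four-step blueprint (A)--(D) of the user's guide for the pair \eqref{eq:DM--ULMC-multistep}/\eqref{eq:ULD}, following the template of the \eqref{eq:m-lmc} analysis but exploiting two features specific to the underdamped setting: the Brownian motion enters the position variable only through the $\eib$-kernel, and the Malliavin derivative $\MD_s$ annihilates $P_{kh}$ and $X_{kh}$ within block $k$ --- precisely the $O(1)$ contribution that forced the double-midpoint cancellation for $\bar\Mu$ itself. Together these make every entry of the diagonal block of the relevant Malliavin kernel an extra factor $h$ smaller than in Lemma~\ref{lem:MD-one-step}, which is the origin of the improved exponent $h^4$ (versus $h^2$ there).

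Concretely, I would first discretize both equations by the piecewise-constant scheme of Part (A), noting that it is implicit through $\msf G_t^\opt$ and that unique solvability plus $L^2_{\mbf P}$-convergence for small $h$ is supplied by Lemma~\ref{lem:anticipating-SDE-existence}. Since $\MD_s P_{kh} = \MD_s X_{kh} = 0$ for $s$ in block $k$, the chain rule gives $\MD_s X_t = \sqrt{2\gamma}\,\eib(s\vee kh, t)\one_{s\le t} + (\text{implicit correction})$ and $\MD_s X_k^\pm = \sqrt{2\gamma}\,\eib(s\vee kh, kh+\tau_k^\pm)\one_{s \le kh+\tau_k^\pm} + (\text{implicit correction})$, where a short \Gronwall argument shows the correction from the $\msf G_t^\opt$-dependence is $O(\beta h)$ relative to the main term; the crucial a.s.\ consequence is $\norm{\MD_s X_t}, \norm{\MD_s X_k^\pm}\lesssim\sqrt\gamma\,h$, \emph{uniformly and with no cancellation used}. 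Differentiating the ``diff'' quantities and inserting this, together with $\norm{\nabla^2 V}\le\beta$, $\eia\le1$, $\eib(\cdot,(k+1)h)\le h$, $\eib(0,h)\le h$, $\eic(0,h)\le h^2$, yields the crude bounds $\norm{\MD_s\operatorname{diff}^\msp}\lesssim\beta\sqrt\gamma\,h^2$ and $\norm{\MD_s\operatorname{diff}^\msx}\lesssim\beta\sqrt\gamma\,h^3$. A Gram-determinant computation gives $\Delta_\sigma = \det\Sigma\asymp h^4$ for $\gamma h\lesssim1$, so $\Sigma^{-1}$ has entries of orders $h^{-1}, h^{-2}, h^{-3}$, and $\lambda_{k,\ell} = (\Sigma^{-1}\operatorname{diff})_\ell$ gives $\norm{\MD_s\lambda_{k,1}}\lesssim\beta\sqrt\gamma\,h$, $\norm{\MD_s\lambda_{k,2}}\lesssim\beta\sqrt\gamma$ a.s.

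Next, \eqref{eq:difference-DM--ULMC-multistep} lets me write, for $s,t$ in the same block, $\MD_s\bar\Mu_t = u_1(t)\,\MD_s\lambda_{k,1} + u_2(t)\,\MD_s\lambda_{k,2}$ with $u_1(t) = -\frac{1}{\sqrt{2\gamma}}\eia(t,(k+1)h)$, $u_2(t) = -\frac{1}{\sqrt{2\gamma}}\eib(t,(k+1)h)$, so the diagonal block of $\MD\bar\Mu$ is a rank-$\le2$ time-kernel; the off-diagonal blocks only contribute a strictly block-lower-triangular part, irrelevant for traces and determinants as in Lemma~\ref{lem:MD-multistep}. Since $\abs{u_1}\lesssim\gamma^{-1/2}$, $\abs{u_2}\lesssim\gamma^{-1/2}h$, this gives the \emph{pointwise} a.s.\ bound $\norm{\MD_s\bar\Mu_t}_{\mrm{op}}\lesssim\beta h$ on the diagonal block, hence $\lvert\tr((q\,\MD\bar\Mu|_{[kh,(k+1)h)})^m)\rvert\le d(q\beta h^2)^m$ for every $m\ge2$ (bound the integrand by $d\prod_j\norm{q\,\MD_{s_{j+1}}\bar\Mu_{s_j}}_{\mrm{op}}$ and integrate over a box of volume $h^m$). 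Separately, a Cauchy--Schwarz estimate for the rank-$2$ kernel gives the $d$-free operator-norm bound $\norm{\MD\bar\Mu|_{[kh,(k+1)h)}}_{\mrm{op}}\lesssim\beta h^2$, so under $h\lesssim\beta^{-1/2}q^{-1/2}$ the operator $\msf I + q\,\MD\bar\Mu$ is invertible on each block with uniformly bounded inverse; combined with a properness argument for $\msf T$ as in the ``Validity of the discretization'' paragraph for \eqref{eq:m-lmc} this gives bijectivity, so $\log\Mdet(\msf I + q\,\MD\bar\Mu) = \sum_{m\ge2}\frac{(-1)^{m+1}}{m}\tr((q\,\MD\bar\Mu)^m)$ and summing the geometric-type bound over $m$ gives $O(d(q\beta h^2)^2) = O(\beta^2 dh^4q^2)$ per block. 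For the multistep version the full kernel is block-lower-triangular in the block index, so the Carleman--Fredholm determinant factorizes over the $N$ diagonal blocks and the log-determinants add, giving $O(\beta^2 dh^4q^2 N)$. Finally, all the traces and the expression $\Mdel\Mu^\eta = \sum_i\inner{\Mu_i^\eta,\xi_i} - \tr\MD\Mu^\eta$ are controlled uniformly in $\eta$ and converge as $\eta\searrow0$ --- to an \Ito integral against $\bar\Mu$ plus a finite trace term $-\tr\MD\bar\Mu = O(\beta dh^2N)$ --- using the uniform continuity of $\nabla^2 V$ from Assumption~\ref{as:regularity}, exactly as in Lemma~\ref{lem:trace-M-LMC}.

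The step I expect to be the main obstacle is the bookkeeping in the second paragraph: carefully propagating the Malliavin derivatives of $\operatorname{diff}^\msp$ and $\operatorname{diff}^\msx$ through the ill-conditioned $\Sigma^{-1}$, establishing the implicit-correction estimates, and proving the Gram nondegeneracy $\Delta_\sigma\asymp h^4$ --- and, conceptually, verifying that the \emph{crude} bounds genuinely suffice, i.e.\ that because $\MD_s$ annihilates $P_{kh}$ one need not reprove any midpoint cancellation here even though $\bar\Mu$ itself is badly behaved without it. By contrast, the combinatorics of the log-det series are mild: the pointwise bound $\norm{\MD_s\bar\Mu_t}_{\mrm{op}}\lesssim\beta h$ on a time-box of volume $h^m$ directly yields $\lvert\tr(K^m)\rvert\lesssim d(\beta h^2)^m$, so no delicate cancellation inside the series is needed --- only the $d$-free operator-norm bound, to guarantee convergence under the stated step-size condition.
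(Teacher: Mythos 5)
Your proposal is correct and follows essentially the same road as the paper: discretize as in Part~(A), show by an implicit (\Gronwall-type) argument that the $\msf G^\opt$-feedback contributes only higher order, propagate Malliavin derivatives through the ill-conditioned $\Sigma^{-1}$ to get bounds on $\MD_s\lambda_{k,1},\MD_s\lambda_{k,2}$, observe the diagonal block of $\MD\bar\Mu$ is a temporal rank-$\le 2$ kernel, and then control the Carleman--Fredholm log-determinant. The multistep extension via block-lower-triangularity and the $\eta\searrow0$ stability argument also match the paper's treatment. The intermediate kernel bounds you write down ($\norm{\MD_s\lambda_{k,1}}\lesssim\beta\sqrt\gamma h$, $\norm{\MD_s\lambda_{k,2}}\lesssim\beta\sqrt\gamma$, hence $\norm{\MD_s\bar\Mu_t}\lesssim\beta h$ and an operator norm $\lesssim\beta h^2$) are all consistent with the paper's scalings.

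The one place you depart is the final estimate on $\log\Mdet$. The paper truncates via Lemma~\ref{lem:log-det}, keeping only $-\tfrac12\tr(M^2)+O(\norm M_{\op}^3 d)$, and then uses cyclicity of the trace together with the temporal rank-$2$ structure ($\MD\hat\lambda_1\eia^\top + \MD\hat\lambda_2\eib^\top$) to collapse the temporal trace to a $d\times d$ trace, which is directly bounded by $\beta^2 d h^4$. You instead bound \emph{every} term $\tr(K^m)$ by $d(q\beta h^2)^m$ via the pointwise kernel bound and the volume $h^m$ of the time-box, and sum the power series under the step-size condition. Both are valid and give $O(\beta^2 dh^4 q^2)$; your version avoids tracking the operator-norm remainder in the $\log\det$ linearization, while the paper's version isolates the explicit leading trace (useful if one wants a refined, not merely order-of-magnitude, expression). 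One minor point of interpretation: the $h^2$-improvement over Lemma~\ref{lem:MD-one-step} comes from the Brownian increments entering $X$ only through $\eib$ (so $\MD_s X_t = O(\sqrt\gamma h)$ rather than $O(1)$), not from the annihilation of $P_{kh},X_{kh}$ --- the latter only gives block-triangularity, which already holds in the overdamped case. This does not affect the correctness of your argument, only the framing.
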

The proof of the multistep formula is essentially equivalent to analysis in Lemma~\ref{lem:MD-multistep}. 

\paragraph{Part (D): Computation of the $\Renyi_q$ divergence.}

\begin{lemma}\label{lem:DMD-renyi-generic}
    Let $\bf Q$, $\bf P$ be the laws under which $(\tilde B_t)_{t \in [0, Nh)}$ and $(B_t)_{t \in [0, Nh)}$ respectively are Brownian. Then, we have, for $q \geq 2$, if $h \lesssim \frac{1}{\beta^{1/2} q^{1/2}}$, under Assumption~\ref{as:regularity}
    \begin{align*}
        \Renyi_q(\mathbf{P} \mmid \mathbf{Q}) 
        &\lesssim {\beta^2 d h^4 q N} + \frac{1}{q} \max_{k \in [N]} \log \E_{\mathbf P} \exp\Bigl(\frac{q^2 N}{\gamma}\int_{(k-1)h}^{kh} \norm{\eia(t,kh) \lambda_{k,1} + \eib(t,kh) \lambda_{k,2}}^2\D t \Bigr) \,.
    \end{align*}
    We also have the $\KL$ bound
    \begin{align*}
        \KL(\mathbf{P} \mmid \mathbf{Q}) 
        &\lesssim {\beta^2 dh^4 N} + \max_{k \in [N]} \E_{\mathbf P} \Bigl[\frac{N}{\gamma}\int_{(k-1)h}^{kh} \norm{\eia(t,kh) \lambda_{k,1} + \eib(t,kh) \lambda_{k,2}}^2\D t \Bigr] \,.
    \end{align*}
\end{lemma}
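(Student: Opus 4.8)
The plan is to follow the template of the proof of Lemma~\ref{lem:renyi-md} verbatim, simply substituting the DM--ULMC-specific objects: the drift discrepancy $\bar\Mu$ of~\eqref{eq:difference-DM--ULMC-multistep} and the Carleman--Fredholm estimate of Lemma~\ref{lem:DMD-one-step-uld}. First I would apply Theorem~\ref{thm:anticipating-girsanov} to the transformation $\msf T$ of~\eqref{eq:transformation-generic} with $\bar\Mu$ as in~\eqref{eq:difference-DM--ULMC-multistep}; its bijectivity for $h \lesssim \beta^{-1/2} q^{-1/2}$ follows from the operator-norm control $\norm{q\MD\bar\Mu} = O(\beta h^2 q)$ implicit in Lemma~\ref{lem:DMD-one-step-uld}, combined with a properness argument for $\bs\xi \mapsto \bs\xi + q\bar\Mu$ entirely analogous to~\eqref{eq:aligned} (using $\eia,\eib \leq 1$ and the Lipschitzness of $\nabla V$ to bound $\norm{\lambda_{k,i}}$). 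This gives the Radon--Nikodym derivative $\frac{\D\mathbf Q}{\D\mathbf P}$, and hence, upon inverting the transform (the requisite regularity being preserved under inversion), an expression for $\frac{\D\mathbf P}{\D\mathbf Q}$ in terms of $\log\Mdet(I + \MD\bar\Mu)$, $\Mdel\bar\Mu$, and $\norm{\bar\Mu}^2_{\Hb}$.

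For the R\'enyi bound I would write $\Renyi_q(\mathbf P \mmid \mathbf Q) = \tfrac{1}{q-1}\log\E_{\mathbf Q}(\tfrac{\D\mathbf P}{\D\mathbf Q})^q = \tfrac{1}{q-1}\log\E_{\mathbf P}(\tfrac{\D\mathbf P}{\D\mathbf Q})^{q-1}$ and apply Cauchy--Schwarz exactly as in Lemma~\ref{lem:renyi-md}: one factor is a Dol\'eans--Dade-type exponential for the process $-2(q-1)\bar\Mu$, which has unit expectation by Corollary~\ref{cor:doleans-dade} (legitimate since the rescaled transform is still bijective under the same step-size condition, again by Lemma~\ref{lem:DMD-one-step-uld}), while the other factor collects the determinant ratio at the scalings $r = 1$ and $r = -2(q-1)$, almost surely bounded by $O(\beta^2 d h^4 q^2 N)$ via Lemma~\ref{lem:DMD-one-step-uld}, together with an exponential moment of $\bigl((q-1) + 2(q-1)^2\bigr)\norm{\bar\Mu}^2_{\Hb}$. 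I would then substitute the explicit form $\norm{\bar\Mu}^2_{\Hb} = \tfrac{1}{2\gamma}\sum_{k}\int_{(k-1)h}^{kh}\norm{\eia(t,kh)\lambda_{k,1} + \eib(t,kh)\lambda_{k,2}}^2\,\D t$ coming from~\eqref{eq:difference-DM--ULMC-multistep} (after re-indexing the block $[kh,(k+1)h)$ to $[(k-1)h,kh)$), and use the Jensen / AM--GM splitting $\exp(\sum_{k} a_k) \leq \tfrac1N\sum_{k}\exp(N a_k)$ to pull the maximum over $k$ outside the logarithm. Dividing by $q-1$ and absorbing the $O(\beta^2 d h^4 q^2 N)$ determinant contribution into the leading $\beta^2 d h^4 q N$ term yields the claimed R\'enyi inequality.

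For the $\KL$ bound I would not send $q \to 1$ in the R\'enyi inequality, since that would leave an exponential moment rather than the sharper expectation appearing in the statement; instead I compute directly, $\KL(\mathbf P \mmid \mathbf Q) = \E_{\mathbf P}\log\tfrac{\D\mathbf P}{\D\mathbf Q} = -\E_{\mathbf P}\log\Mdet(I + \MD\bar\Mu) + \E_{\mathbf P}[\Mdel\bar\Mu] + \tfrac12\E_{\mathbf P}\norm{\bar\Mu}^2_{\Hb}$. The Skorohod term vanishes, $\E_{\mathbf P}[\Mdel\bar\Mu] = 0$, by the Gaussian integration-by-parts identity of Definition~\ref{defn:skorohod} (established for the elementary approximations and passed to the limit); the determinant term is almost surely $O(\beta^2 d h^4 N)$ by Lemma~\ref{lem:DMD-one-step-uld} with $q = 1$; and $\tfrac12\E_{\mathbf P}\norm{\bar\Mu}^2_{\Hb} = \tfrac{1}{4\gamma}\sum_{k}\E_{\mathbf P}\int_{(k-1)h}^{kh}\norm{\eia(t,kh)\lambda_{k,1} + \eib(t,kh)\lambda_{k,2}}^2\,\D t \leq \tfrac{N}{4\gamma}\max_{k}\E_{\mathbf P}\int_{(k-1)h}^{kh}\norm{\eia(t,kh)\lambda_{k,1} + \eib(t,kh)\lambda_{k,2}}^2\,\D t$, which is the stated bound up to constants.

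The main obstacle is not any single computation but the analytic bookkeeping surrounding the anticipating Girsanov machinery: (i) confirming that $\msf T$ and its rescalings $r\bar\Mu$ are genuinely bijective for $h \lesssim \beta^{-1/2} q^{-1/2}$, so that Theorem~\ref{thm:anticipating-girsanov} and Corollary~\ref{cor:doleans-dade} apply with equality rather than the inequality~\eqref{eq:change-of-measure-guaranteed}; (ii) ensuring that the almost-sure determinant bound and the mean-zero property of $\Mdel\bar\Mu$ remain stable as the piecewise-constant mesh $\eta \searrow 0$, where one invokes the stability claim of Lemma~\ref{lem:DMD-one-step-uld} and the lower semicontinuity of $\Renyi_q$ and $\KL$; and (iii) in the R\'enyi case, tracking that the determinant contributions at all three scalings ($r = 1,\ -(q-1),\ -2(q-1)$) stay $O(\beta^2 d h^4 q^2 N)$ so they are absorbed after dividing by $q - 1$. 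All of these are handled exactly as in the M--LMC argument; the only genuinely new ingredient is the quartic-in-$h$ determinant estimate of Lemma~\ref{lem:DMD-one-step-uld}, which replaces the quadratic estimate used for~\eqref{eq:M-LMC} and is responsible for the improved $h^4$ (rather than $h^2$) dependence here.
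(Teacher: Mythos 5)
Your proof is correct and, for the R\'enyi bound, tracks the paper's own argument exactly: invert the Radon--Nikodym derivative from Theorem~\ref{thm:anticipating-girsanov}, apply the same Cauchy--Schwarz split with the Dol\'eans--Dade factor at scaling $-2(q-1)$, absorb the determinant ratio using the a.s.\ bound $\log\Mdet(I + r\MD\bar\Mu) \lesssim \beta^2 d h^4 r^2 N$, and push the max over $k$ out with Jensen/H\"older. For the $\KL$ bound, you take a slightly different (and arguably cleaner) route than the paper's terse remark ``repeating the above analysis\ldots shows that the required error bound is instead $L^2$ under $\mathbf P$'': you compute $\KL(\mathbf P\mmid\mathbf Q) = \E_{\mathbf P}\log\frac{\D\mathbf P}{\D\mathbf Q}$ directly, kill the Skorohod term via the mean-zero property $\E_{\mathbf P}[\Mdel\bar\Mu]=0$ (Gaussian integration by parts from Definition~\ref{defn:skorohod}, passed through the $\eta\searrow 0$ limit), bound the determinant term a.s.\ by $O(\beta^2 dh^4 N)$, and keep only the $L^2$ norm of $\bar\Mu$. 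This avoids any Cauchy--Schwarz decomposition or limiting $q\to1$ argument, at the modest price of needing $L^1$ integrability of $\Mdel\bar\Mu$, which holds under Assumption~\ref{as:regularity}. One small cosmetic point: you list three scalings $r = 1, -(q-1), -2(q-1)$ for the determinant; only $r=1$ and $r=-2(q-1)$ actually appear in the display, matching a minor inconsistency already present in the paper's text.
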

\begin{proof}
    The proof follows essentially the same outline as Lemma~\ref{lem:renyi-md}. Using Theorem~\ref{thm:anticipating-girsanov} and Lemma~\ref{lem:DMD-one-step-uld}, noting that the dynamics are only well-defined under the condition given on the step-size,
    \begin{align*}
        \Renyi_q(\mathbf P \mmid \mathbf Q)
        &\leq \frac{1}{q-1} \log \E_{\bf P} \Bigl[\Mdet\Bigl(I -2(q-1) \MD \bar \Mu \Bigr) \exp\Bigl(2(q-1) \Mdel \bar \Mu - 2(q-1)^2 \norm{\bar \Mu}^2_{\Hb} \Bigr) \Bigr] \\
        &\qquad +\frac{1}{q-1} \log \E_{\bf P} \biggl[\exp\Bigl(\bigl((q-1) + 2(q-1)^2\bigr)\norm{\bar \Mu}^2_{\Hb} \Bigr) \frac{\det(I + \MD \bar \Mu)^{-2(q-1)}}{\det(I - 2(q-1) \MD \bar \Mu)}\biggr]\,.
    \end{align*}
    Via Corollary~\ref{cor:doleans-dade}, the first term has unit expectation again. Now, we note that we had bounded
    \begin{align*}
         \log \Mdet \Bigl(I + r \MD\bar \Mu \Bigr)
         &\lesssim {\beta^2 d h^4 r^2 N}\,,
    \end{align*}
    almost surely, for $h \lesssim \frac{1}{\beta^{1/2} r^{1/2}}$, and furthermore this holds for all $r \in \R$. If we apply this inequality twice with $r = 1$ and $r = -(q-1)$, then we can bound the determinant by
    \begin{align*}
        \log \biggl[\frac{\Mdet (I -2(q-1) \MD\bar \Mu )^{-1}}{\Mdet (I +\MD\bar \Mu )^{2(q-1)}}\biggr] \lesssim {\beta^2 d h^4 q^2 N}\,, 
    \end{align*}
    almost surely.
    It follows that for $h$ small enough, via the AM--GM inequality,
    \begin{align*}
        \Renyi_q(\mathbf P \mmid \mathbf Q) &\lesssim {\beta^2 dh^4 qN}+  \frac{1}{q-1} \log \E_{\mathbf P} \exp\Bigl(\sum_{k=1}^N \frac{q^2}{\gamma}\int_{(k-1)h}^{kh} \norm{\msf G_t^\opt - \nabla V(X_t)}^2\,\D t \Bigr) \\
        &\lesssim {\beta^2 d h^4 q N} + \frac{1}{q} \max_{k \in [N]} \log \E_{\mathbf P} \exp\Bigl(\frac{q^2 N}{\gamma}\int_{(k-1)h}^{kh} \norm{\eia(t,kh) \lambda_{k,1} + \eib(t,kh) \lambda_{k,2}}^2\D t \Bigr) \,.
    \end{align*}
    Finally, repeating the above analysis for the $\KL$ divergence shows that the required error bound is instead $L^2$ under $\mathbf P$; alternatively, we could have shown this by a more careful analysis combined with a limiting argument.
\end{proof}

The following lemma leverages the above to obtain a crude R\'enyi bound. We will use this in multiple ways in order to obtain a refined $O(d^{1/4})$ dependence.
\begin{lemma}\label{lem:renyi-dmd-coarse}
    Under Assumption~\ref{as:regularity}, with $\mathbf P, \mathbf Q$ denoting paths starting at a measure $\bs \mu_0$ and following~\eqref{eq:DM--ULMC-multistep} and~\eqref{eq:ULD} respectively, $h \lesssim \frac{\gamma^{1/2}}{\beta q^{1/2} T^{1/2}} \wedge \frac{1}{\beta^{1/2} q^{1/2}}$,
    \begin{align*}
        \Renyi_q(\mathbf P \mmid \mathbf Q) \lesssim \frac{\beta^2 {h}^3 q N}{\gamma} \Bigl(d + \Renyi_3(\bs \mu_0 \mmid \bs \pi) \Bigr)\,,
    \end{align*}
\end{lemma}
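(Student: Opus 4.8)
The plan is to invoke Lemma~\ref{lem:DMD-renyi-generic} and then bound the exponential moment on its right-hand side by controlling $\lambda_{k,1}, \lambda_{k,2}$ almost surely. Recall that $\lambda_{k,1}, \lambda_{k,2}$ are, up to the fixed constants $\sigma_{ij}/\Delta_\sigma = \Theta(h^{-2})$ and $\Theta(h^{-3})$ respectively in the relevant scalings, linear combinations of the ``diff'' quantities
\[
    \int_{kh}^{(k+1)h} \eia(t, (k+1)h)\, \nabla V(X_t)\, \D t - \msf G_k^\msp\,, \qquad \int_{kh}^{(k+1)h} \eib(t, (k+1)h)\, \nabla V(X_t)\, \D t - \msf G_k^\msx\,.
\]
Since $\msf G_k^\msp = \eib(0,h)\,\nabla V(X_k^+)$ and $\msf G_k^\msx = \eic(0,h)\,\nabla V(X_k^-)$, each diff is the difference between a weighted time-average of $\nabla V(X_t)$ over $[kh, (k+1)h)$ and $\nabla V$ evaluated at a midpoint $X_k^{\pm}$. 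Using $\beta$-Lipschitzness of $\nabla V$ (Assumption~\ref{as:regularity}), this difference is bounded by $\beta$ times a weighted average of $\norm{X_t - X_k^{\pm}}$; and since both $X_t$ and $X_k^{\pm}$ differ from $X_{kh}$ by $O(h)$-scale displacements driven by $P_{kh}$, the gradient, and the Brownian increment, one gets pointwise bounds of the shape $\norm{X_t - X_k^{\pm}} \lesssim h\norm{P_{kh}} + h^2 \norm{\nabla V(X_{kh})} + \sqrt{\gamma}\,\sup_{s \in [kh,(k+1)h)}\norm{B_s - B_{kh}}$, with the constants absorbing factors of $\eia, \eib, \eic$ which are all $O(1)$ (indeed $O(h)$, $O(h^2)$ for $\eib, \eic$). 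After multiplying by the $\Theta(h^{-2}), \Theta(h^{-3})$ normalizers and then by the $\eia(t,kh), \eib(t,kh)$ weights in the integrand of Lemma~\ref{lem:DMD-renyi-generic}, the net effect is that
\[
    \frac{1}{\gamma}\int_{(k-1)h}^{kh} \norm{\eia(t,kh)\lambda_{k,1} + \eib(t,kh)\lambda_{k,2}}^2\, \D t \;\lesssim\; \frac{\beta^2 h^3}{\gamma}\Bigl(\norm{P_{(k-1)h}}^2 + h^2\norm{\nabla V(X_{(k-1)h})}^2 + \gamma \sup_{s}\norm{B_s - B_{(k-1)h}}^2\Bigr)\,,
\]
up to lower-order terms in $h$. (This is the step where I would be careful about exactly which power of $h$ each term carries; the cancellation of the leading term via the choice $\tau_k^- = h/3$, $\tau_k^+ = h/2$ alluded to after~\eqref{eq:ULD} should not be needed for this crude bound, only for the refined $O(d^{1/4})$ one.)

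Next I plug this into the exponential moment. The Brownian supremum contributes a subexponential tail, so $\E_{\mathbf P}\exp(c q^2 N \cdot \beta^2 h^3 \sup_s \norm{B_s - B_{(k-1)h}}^2) \lesssim 1$ as soon as $q^2 N \beta^2 h^3 \lesssim 1$, i.e. under the step-size hypothesis $h \lesssim \gamma^{1/2}/(\beta q^{1/2} T^{1/2})$ after writing $N = T/h$ — note $q^2 N \beta^2 h^3 = q^2 \beta^2 h^2 T \lesssim \gamma q$, which combined with the other factor of $1/q$ out front of the R\'enyi bound and absorbing $\gamma$'s correctly is the intended regime; here is where I would double-check the exact matching of the step-size condition to the exponent. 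For the $\norm{P_{(k-1)h}}^2$ and $\norm{\nabla V(X_{(k-1)h})}^2$ terms, I would bound $\norm{\nabla V(X_{(k-1)h})}^2 \lesssim \beta^2 \norm{X_{(k-1)h} - x^\star}^2 + \norm{\nabla V(x^\star)}^2$ where $x^\star$ is the mode, so both reduce to controlling an exponential moment of $\norm{(X_{(k-1)h}, P_{(k-1)h})}^2$ under $\mathbf P$. A standard a-priori second-moment / exponential-integrability estimate along~\eqref{eq:DM--ULMC-multistep} — proved by a one-step Lyapunov contraction argument using $h \lesssim 1/\beta$ and the fact that the scheme is a perturbation of the exact~\eqref{eq:ULD} flow, which itself contracts a Gaussian-type Lyapunov functional toward $\bs\pi$ — shows $\log\E_{\mathbf P}\exp(\lambda \norm{(X_{kh},P_{kh})}^2) \lesssim d + \Renyi_3(\bs\mu_0 \mmid \bs\pi)$ for $\lambda$ below an $O(1/d)$-type threshold; the $\Renyi_3$ appears through a change-of-measure (Hölder with exponents tuned to $3$) from $\mathbf P$-law of the initialization to $\bs\pi$, exactly as in the proof of Corollary~\ref{cor:ormd-error} where the analogous $\Renyi_3(\mu_0 \mmid \pi)$ shows up. Combining, the total exponent is $\lesssim (q^2 N \beta^2 h^3/\gamma)\cdot(\text{const})$ times $(d + \Renyi_3(\bs\mu_0 \mmid \bs\pi))$ up to the $\lambda$-threshold, and dividing by the prefactor $\tfrac1q$ yields the claimed
\[
    \Renyi_q(\mathbf P \mmid \mathbf Q) \;\lesssim\; \frac{\beta^2 h^3 q N}{\gamma}\bigl(d + \Renyi_3(\bs\mu_0 \mmid \bs\pi)\bigr)\,.
\]

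The main obstacle I anticipate is not the algebra but the a-priori exponential-moment bound on the iterates $(X_{kh}, P_{kh})$ of~\eqref{eq:DM--ULMC-multistep}: one needs a Lyapunov functional for the \emph{discretized} double-midpoint chain that contracts geometrically (so that the bound does not blow up with $N$), and one must verify that the midpoint evaluations $\nabla V(X_k^{\pm})$ do not destroy this contraction — this requires the step-size restriction $h \lesssim 1/\beta$ and a careful accounting of the extra stochastic terms, and is the kind of estimate that in~\cite{scr4} and~\cite{chewisamplingbook} is done with some effort. A secondary subtlety is ensuring that the exponential-moment threshold (the largest $\lambda$ for which $\E_{\mathbf P}\exp(\lambda\norm{\cdot}^2) < \infty$ with the stated RHS) is compatible with the coefficient $q^2 N \beta^2 h^3/\gamma$ forced on us by the integrand — this is precisely what the hypothesis $h \lesssim \gamma^{1/2}\beta^{-1} q^{-1/2} T^{-1/2}$ is engineered to guarantee, and checking the constants line up is the last piece of bookkeeping.
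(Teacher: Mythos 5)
Your opening steps are on track and match the paper: you invoke Lemma~\ref{lem:DMD-renyi-generic}, bound $\lambda_{k,1}, \lambda_{k,2}$ via $\beta$-Lipschitzness of $\nabla V$ and the displacement estimates on $\norm{X_t - X_k^{\pm}}$, and arrive at (up to lower-order terms) the same pointwise bound
\[
    \frac{1}{\gamma}\int_{(k-1)h}^{kh} \norm{\eia(t,kh)\lambda_{k,1} + \eib(t,kh)\lambda_{k,2}}^2\, \D t \;\lesssim\; \frac{\beta^2 h^3}{\gamma}\Bigl(\norm{P_{(k-1)h}}^2 + h^2\norm{\nabla V(X_{(k-1)h})}^2 + \gamma \sup_{s}\norm{B_s - B_{(k-1)h}}^2\Bigr)
\]
that the paper derives (it writes it as $h\Lambda^2$, same thing). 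Your bookkeeping of the step-size condition against the Brownian supremum is also correct.

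The gap is precisely the ``main obstacle'' you flag, and it is fatal to the route you propose. You want to control $\log\E_{\mathbf P}\exp(\lambda\norm{(X_{kh},P_{kh})}^2)$ via a Lyapunov contraction of the discrete chain toward $\bs\pi$. But Lemma~\ref{lem:renyi-dmd-coarse} assumes only Assumption~\ref{as:regularity}, which permits $-\beta I_d \preceq \nabla^2 V$, so $V$ may be nonconvex and neither the continuous~\eqref{eq:ULD} flow nor any reasonable discretization contracts a quadratic Lyapunov functional toward $\bs\pi$. Without contraction, an exponential moment built this way could grow along the chain, and the ``change-of-measure from the initialization'' step you suggest does not repair this because the quantity you need to control is the moment at each iterate $k$, not just at $k=0$. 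The paper avoids this entirely: it does \emph{not} prove any uniform-in-time moment bound for the algorithm iterates. Instead it uses the subgaussianity of $\nabla V$ and $P$ under the \emph{stationary} measure $\bs\pi$ (Lemma~\ref{lem:subgsn-score} plus the Gaussian $p$-marginal of $\bs\pi$), transfers this to $\bs\mu_{kh}$ via the R\'enyi change-of-measure (Lemma~\ref{lem:renyi-change-measure}) at the cost of a factor $\Renyi_2(\bs\mu_{kh}\mmid\bs\pi)$, and then bounds $\Renyi_2(\bs\mu_{kh}\mmid\bs\pi) \lesssim \Renyi_4(\mathbf P\mmid\mathbf Q) + \Renyi_3(\bs\mu_0\mmid\bs\pi)$ by the weak triangle inequality (Lemma~\ref{lem:renyi-triangle}), data-processing, and stationarity of $\bs\pi$ under~\eqref{eq:ULD}. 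This produces a \emph{self-bounding} inequality in $\Renyi_q(\mathbf P\mmid\mathbf Q)$: the quantity being estimated reappears on the right-hand side with a small coefficient, and the step-size hypothesis $h \lesssim \gamma^{1/2}\beta^{-1}q^{-1/2}T^{-1/2}$ is exactly what makes that coefficient small enough to absorb back into the left (one also takes $q \geq 4$ so the $\Renyi_4$ term is dominated by $\Renyi_q$). This bootstrap is the missing idea; replacing your Lyapunov step with it makes the rest of your argument go through as written.
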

\begin{remark}
    The above bound does not leverage any notion of higher order smoothness and holds for any choice of midpoint, and hence obtains a rate no better than ULMC.
\end{remark}

\subsubsection{Deterministic double midpoint}
Now, it remains to obtain a refined bound through a higher-order expansion. Assume that $\tau_k^- = \frac{h}{3}$ and $\tau_k^+ = \frac{h}{2}$ identically for all $k \in [N]$. To take advantage of the improved properties of the discretization, we impose the following assumption. First, for a $3$-tensor $A \in \R^{d\times d \times d}$, we define its $\{1,2\}, \{3\}$ norm as follows:
\begin{align*}
    \norm{A}_{\{1,2\}, \{3\}} = \sup_{x \in \R^{d\times d}, y \in \R^d} \Bigl\{A_{ijk}\, x_{ij}\, y_k; \quad \norm{x}_F \leq 1,\, \norm{y} \leq 1 \Bigr\}\,.
\end{align*}
We can also define its $\{1,2,3\}$ norm, or Frobenius norm, with
\begin{align*}
    \norm{A}_{\{1,2,3\}} = \sqrt{\sum_{i,j,k=1}^d A_{i,j,k}^2}\,.
\end{align*}
The following assumption may seen somewhat technical at a glance, but it is crucial for our results.
\begin{assumption}\label{as:chaos-tail}
    $V$ is thrice differentiable and its third derivative tensor has the property that, for $\xi$ a standard Gaussian and $x \in \R^d$ deterministic and arbitrary, for all $\delta \in (0, 1/c)$ for an absolute constant $c$, with probability $1-\delta$, and for some parameters $\beta_\eff, \beta_H \in \R_+$ with $\beta_H = \sup_{x \in \R^d} \norm{\nabla^3 V(x)}_{\{1,2\}, \{3\}}$,
    \begin{align*}
        \norm{\nabla^3 V(x)[\xi, \xi]}^2 \lesssim \beta_H^2 d + \beta_{\eff}^2 d^{1/2} \log \frac{1}{\delta} + \beta_{\eff}^2 \log^2 \frac{1}{\delta}\,.
    \end{align*}
\end{assumption}
The assumption of thrice-differentiability is not strictly necessary; twice-differentiability combined with boundedness of the subdifferential of $\nabla^2 V$ is enough, although we do not work with such an assumption for simplicity. Assumption~\ref{as:chaos-tail} is somewhat non-standard, compared to the assumption $\norm{\nabla^3 V}_{\{1,2\}, \{3\}} \lesssim \beta_H$ which has appeared in~\cite{paulin2024correction, chen2023does}. Nonetheless, we demonstrate that Assumption~\ref{as:chaos-tail} holds with dimension-independent $\beta_\eff$ in two key settings.
\begin{example}[Product measures]
    Suppose $\bs \pi$ is a product measure, so that $(\nabla^3 V)_{i,j,k} = 0$ unless $i = j = k$. Then, this reduces to bounding
    \begin{align*}
        \Bigl\lvert \sum_{i=1}^d\bigl\lvert\partial_i^3 V(x_i) [\xi_i, \xi_i]\bigr\rvert^2- \E \sum_{i=1}^d\bigl\lvert\partial_i^3 V(x_i) [\xi_i, \xi_i]\bigr\rvert^2\Bigr\rvert \lesssim \norm{\nabla^3 V}_{\{1,2\},\{3\} }^2 \abs{\sum_{i=1}^d \xi_i^4 - \E \sum_{i=1}^d \xi_i^4}\,.
    \end{align*}
    noting that $\norm{\nabla^3 V}_{\{1,2\},\{3\} } = \max_{i \in [d]} \abs{\partial_i^3 V}$, and thus that $\beta_H^2$ is dimension-free.

    Now, this is an order-$4$ diagonal Gaussian chaos. Consider the vector $X \in \R^{d/2}$ with $X_j = \xi_{2j}^2 + \xi_{2j+1}^2$, for $j = 1,\dotsc, d$ (assuming $d$ is even for simplicity). Then, each $X_j \sim \msf{exponential}(0, 1/2)$. It follows that as each component of $X_j$ satisfies a Poincar\'e inequality with constant $4$, that the vector $X$ also satisfies a Poincar\'e inequality with the same constant. Finally, this implies that for $t \gtrsim 1$, as $X \mapsto \norm{X}$ is Lipschitz,
    \begin{align*}
        \Pr\{\abs{\norm{X}- \E \norm{X}} \geq t\} \leq 2\exp\Bigl(-\frac{t}{8} \Bigr)\,,
    \end{align*}
    or alternatively, if $\log \frac{1}{\delta} \gtrsim 1$, then with probability $1-\delta$,
    \begin{align*}
        \norm{X} \lesssim \E[\norm{X}] + \log \frac{1}{\delta}\,.
    \end{align*}
    It follows by Jensen's inequality and Young's inequality that
    \begin{align*}
        \norm{X}^2 \lesssim \E[\norm{X}^2] + \E[\norm{X}]\log \frac{1}{\delta} + \log^2 \frac{1}{\delta}\,.
    \end{align*}
    As $\E[\norm{X}^2] \lesssim d$, in this case we have (multiplying by $\beta_H^2$), Assumption~\ref{as:chaos-tail} with $\beta_\eff = \beta_H$. 
\end{example}

\begin{example}[Dimension-free $\beta_T$]
    When $\beta_T = \sup_{x \in \R^d} \norm{\nabla^3 V(x)}_{\{1,2, 3\}}$ does not depend on the dimension, then Lemma~\ref{lem:frobenius-gaussian-tail} tells us that
    \begin{align*}
        \Pr\Bigl\{\Bigl\lvert&\norm{\nabla^3 V(x)[\xi, \xi]}^2 - \E \norm{\nabla^3 V(x)[\xi, \xi]}^2\Bigr\rvert \geq t \Bigr\} \\
        &\leq 2\exp\Bigl(-c \min \Bigl\{\frac{\sqrt{t}}{\norm{\nabla^3 V}_{\{1,2,3\} }}, \frac{t}{\sqrt{d} \norm{\nabla^3 V}_{\{1,2\},\{3\}}^2 + \norm{\nabla^3 V}_{\{1,2,3\}}^2} \Bigr\} \Bigr)\,.
    \end{align*}
    This verifies Assumption~\ref{as:chaos-tail} with constant $\beta_\eff^2 = \max\{\beta_T^2, \beta_H^2\} = \beta_T^2$, since $\beta_T^2$ always upper bounds $\beta_H^2$.
    When $\norm{\nabla^3 V}_{\{1,2,3\}}^2$ is not implicitly dependent on the dimension, then this implies $\beta_\eff$ is dimension-independent, which is needed for the $O(d^{1/4})$ rates. This occurs for instance in some of the examples considered in~\cite{chen2023does}.
\end{example}
Finally, we do not rule out the possibility that the sole assumption $\norm{\nabla^3 V}_{\{1,2\}, \{3\}} \lesssim \beta_H$ may be sufficient to obtain a dimension-free $\beta_\eff$ in Assumption~\ref{as:chaos-tail}. As far as we are aware, there is no such concentration result available in the literature, and a thorough exploration of this subject would be beyond the scope of the present work.

For our final result, we only obtain a single-step bound assuming a crude warm-start, $\Renyi_4(\bs \mu_0 \mmid \bs \pi) \lesssim d^{1/2}$. As we cannot expect to obtain $O(d^{1/4})$ dependence across multiple steps without taking into account weak error, this is the best that can be done using \emph{only} the techniques presented in this paper.
\begin{theorem}[{Sharp one-step regularity of~\eqref{eq:DM--ULMC-multistep}}]\label{thm:dmd-sharp-one-step}
    Suppose Assumptions~\ref{as:regularity} and~\ref{as:chaos-tail} hold. Furthermore, suppose that $\Renyi_4(\bs \mu_0 \mmid \bs \pi) \lesssim d^{1/2}$, and $h \lesssim \frac{1}{\sqrt{\beta}}$, $\gamma \asymp \sqrt{\beta}$. Then, we can guarantee that, for $\bs \mu_h$ the output of~\eqref{eq:DM--ULMC-marginal} after a single step, and $(\tilde{\bs \mu}_t)_{t \in [0, h)}$ the evolution of $\bs \mu_0$ along~\eqref{eq:ULD},
    \begin{align*}
        \KL(\bs \mu_h \mmid \tilde{\bs \mu}_h) \lesssim {\beta^2 d h^4} + \frac{(\beta_H^2 + \beta_\eff^2) d h^5}{\gamma}\,.
    \end{align*}
\end{theorem}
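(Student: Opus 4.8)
The plan is to pass from the time-$h$ marginals to the full path laws, apply the $\KL$ part of Lemma~\ref{lem:DMD-renyi-generic} with $N=1$, recognise the resulting error term as a small quadratic form in two ``local errors'', and then exploit the specific midpoints $\tau^-=h/3$, $\tau^+=h/2$ to extract the leading $\nabla^3 V$ contribution. Let $\mathbf P,\mathbf Q$ denote the laws on $C_0([0,h);\R^{2d})$ of the single-step interpolations~\eqref{eq:DM--ULMC-multistep} and~\eqref{eq:ULD}, both started from $\bs\mu_0$; by construction their time-$h$ marginals are $\bs\mu_h$ and $\tilde{\bs\mu}_h$, so data processing gives $\KL(\bs\mu_h \mmid \tilde{\bs\mu}_h)\le \KL(\mathbf P \mmid \mathbf Q)$. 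Since $h\lesssim 1/\sqrt\beta$ and $\gamma\asymp\sqrt\beta$, the step-size hypothesis of Lemma~\ref{lem:DMD-renyi-generic} holds, and it yields
\begin{align*}
    \KL(\bs\mu_h \mmid \tilde{\bs\mu}_h) \;\lesssim\; \beta^2 d h^4 \;+\; \frac1\gamma\,\E_{\mathbf P}\Bigl[\int_0^h \norm{\eia(t,h)\lambda_1 + \eib(t,h)\lambda_2}^2 \, \D t\Bigr]\,,
\end{align*}
where $\lambda_1,\lambda_2$ are the single-step coefficients of~\eqref{eq:DM--ULMC-multistep}. The first term already matches the claim, so it remains to bound the expectation by $\gamma\beta^2 d h^4 + (\beta_H^2+\beta_\eff^2)\,d\,h^5$.

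Setting $\phi(t)\deq(\eia(t,h),\eib(t,h))^\top$, $\Sigma\deq\int_0^h\phi(t)\phi(t)^\top\,\D t$ (the $2\times2$ matrix with entries $\sigma_{ij}$) and $\operatorname{diff}\deq(\operatorname{diff}^\msp,\operatorname{diff}^\msx)^\top\in\R^{2d}$, the identities $(\lambda_1,\lambda_2)^\top = (\Sigma^{-1}\otimes I_d)\operatorname{diff}$ and $\int_0^h\phi(t)\phi(t)^\top\,\D t = \Sigma$ give
\begin{align*}
    \int_0^h \norm{\eia(t,h)\lambda_1 + \eib(t,h)\lambda_2}^2 \, \D t \;=\; \operatorname{diff}^\top (\Sigma^{-1}\otimes I_d)\,\operatorname{diff}\,,
\end{align*}
and using $\int_0^h\eia(t,h)\,\D t = \eib(0,h)$, $\int_0^h\eib(t,h)\,\D t = \eic(0,h)$ together with the definitions of $\msf G^\msp,\msf G^\msx$ the local errors simplify to $\operatorname{diff}^\msp = \int_0^h\eia(t,h)\bigl(\nabla V(X_t)-\nabla V(X^+)\bigr)\,\D t$ and $\operatorname{diff}^\msx = \int_0^h\eib(t,h)\bigl(\nabla V(X_t)-\nabla V(X^-)\bigr)\,\D t$. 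For $\gamma h\lesssim1$ one has $\sigma_{11}\asymp h$, $\sigma_{12}\asymp h^2$, $\sigma_{22}\asymp h^3$, $\det\Sigma\asymp h^4$, so after an AM--GM on the cross term
\begin{align*}
    \operatorname{diff}^\top (\Sigma^{-1}\otimes I_d)\,\operatorname{diff} \;\lesssim\; \tfrac1h\norm{\operatorname{diff}^\msp}^2 + \tfrac1{h^3}\norm{\operatorname{diff}^\msx}^2\,,
\end{align*}
and it suffices to prove $\E_{\mathbf P}\norm{\operatorname{diff}^\msp}^2 \lesssim \gamma\beta^2 d h^5 + (\beta_H^2+\beta_\eff^2)\,d\,h^6$ and $\E_{\mathbf P}\norm{\operatorname{diff}^\msx}^2 \lesssim \gamma\beta^2 d h^7 + (\beta_H^2+\beta_\eff^2)\,d\,h^8$.

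These are the content of the local-error expansion (Lemma~\ref{lem:dmd-local-error}), and this is the step I expect to be the main obstacle. Taylor-expanding $\nabla V(X_t)-\nabla V(X^{\pm})$ about $X_0$ and inserting the leading-order expressions for $X_t-X_0$ and $X^{\pm}-X_0$ (the implicit dependence of $X_t$ on $\msf G_t^\opt$ feeds in only at order $O(\beta h^6)$ and is discarded), the $\nabla^2 V(X_0)$-contribution to $\operatorname{diff}^\msp$ carries the weight $\int_0^h\eia(t,h)(\eib(0,t)-\eib(0,\tau^+))\,\D t$, which vanishes to leading order precisely when $\tau^+=h/2$, and that to $\operatorname{diff}^\msx$ carries $\int_0^h\eib(t,h)(\eib(0,t)-\eib(0,\tau^-))\,\D t$, which vanishes to leading order precisely when $\tau^-=h/3$; the remaining $\nabla^2 V$-terms are one power of $h$ smaller and, using $\gamma\asymp\sqrt\beta$, $\sqrt\beta h\lesssim1$ and a-priori second-moment bounds on $P_0$ and $\nabla V(X_0)$ under $\bs\mu_0$ (from the warm start, cf.\ Lemma~\ref{lem:renyi-dmd-coarse}), are absorbed into the $\gamma\beta^2 d h^5$, resp.\ $\gamma\beta^2 d h^7$, slack. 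The surviving leading term is the $\nabla^3 V(X_0)$ one, of the form $c\,h^3\,\nabla^3 V(X_0)[P_0,P_0]+(\text{h.o.t.})$ for $\operatorname{diff}^\msp$ and $c'\,h^4\,\nabla^3 V(X_0)[P_0,P_0]+(\text{h.o.t.})$ for $\operatorname{diff}^\msx$, the Brownian--Brownian and mixed pieces being smaller by a factor $\beta h^2$ or $\gamma h$. It remains to bound $\E_{\mathbf P}\norm{\nabla^3 V(X_0)[P_0,P_0]}^2 \lesssim (\beta_H^2+\beta_\eff^2)\,d$: conditioning on $X_0$, under $\bs\pi$ the momentum is an independent standard Gaussian, so Assumption~\ref{as:chaos-tail} gives $\norm{\nabla^3 V(X_0)[P_0,P_0]}^2 \lesssim \beta_H^2 d + \beta_\eff^2\sqrt d\log\tfrac1\delta + \beta_\eff^2\log^2\tfrac1\delta$ with probability $1-\delta$; the warm start $\Renyi_4(\bs\mu_0\mmid\bs\pi)\lesssim\sqrt d$ transfers this tail to $\bs\mu_0$ at the cost of replacing $\log\tfrac1\delta$ by $\log\tfrac1\delta + O(\sqrt d)$, which is absorbed by the $\beta_\eff^2\sqrt d\log\tfrac1\delta$ term, and integrating the tail then yields $\E_{\bs\mu_0}\norm{\nabla^3 V(X_0)[P_0,P_0]}^2 \lesssim (\beta_H^2+\beta_\eff^2)\,d$. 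Collecting these estimates through Lemma~\ref{lem:dmd-local-error} gives the two displayed bounds, hence the theorem. The delicate part is Lemma~\ref{lem:dmd-local-error} itself: pushing the Taylor expansion to the order at which the $\tau^{\pm}$-cancellations manifest, honestly tracking the many $\nabla^2 V$- and $\nabla^3 V$-type remainders, and routing the Gaussian-chaos estimate of Assumption~\ref{as:chaos-tail} through the Rényi warm-start transfer without losing dimension-dependence.
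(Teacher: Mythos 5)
Your proposal tracks the paper's proof almost step for step: both start from the $\KL$ form of Lemma~\ref{lem:DMD-renyi-generic} with $N=1$, reduce the drift-error to the quadratic form $\operatorname{diff}^\top(\Sigma^{-1}\otimes I_d)\operatorname{diff}\lesssim h^{-1}\norm{\operatorname{diff}^\msp}^2+h^{-3}\norm{\operatorname{diff}^\msx}^2$, and then feed in the momentum/position expansions from the proof of Lemma~\ref{lem:dmd-local-error} together with Assumption~\ref{as:chaos-tail} and the R\'enyi change-of-measure. The structure and the use of the deterministic midpoints $\tau^+=h/2,\ \tau^-=h/3$ to kill the leading $\nabla^2V$-term are all correct.

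Two places where you gloss a point the paper has to confront explicitly. First, the expectation in Lemma~\ref{lem:DMD-renyi-generic} is over $\mathbf P$, the law under which $X_t$ follows the \emph{algorithm}; Lemma~\ref{lem:dmd-local-error}'s displayed bounds are derived with $X_t$ following~\eqref{eq:ULD}. The paper opens the proof by noting ``under $\mathbf Q$, $X^+$ does not follow a straightforward equation'' and therefore re-expands under $\mathbf P$, porting the tail estimates like~\eqref{eq:momentum-expansion-high-prob} via Lemma~\ref{lem:high-prob-change}. Simply citing Lemma~\ref{lem:dmd-local-error} as ``the content'' of the required bounds skips this measure-matching; you should either redo the It\^o--Taylor expansion with $X_t$ solving~\eqref{eq:DM--ULMC-multistep}, or transfer the tails across laws. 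Second, because $\msf G^\opt_t$ (hence $X_t$) depends on $\lambda_1,\lambda_2$, the bound on $\operatorname{diff}$ is itself self-referential; the paper isolates $\Lambda^2 \deq \E_{\mathbf P}\norm{\lambda_1}^2 + h^2\E_{\mathbf P}\norm{\lambda_2}^2$ and closes the loop via $h\Lambda^2\lesssim \beta^2\gamma dh^4 + (\beta_H^2+\beta_\eff^2)dh^5 + \beta^2 h^5\Lambda^2$, absorbing the last term for $h\lesssim\beta^{-1/2}$. Your ``feeds in only at order $O(\beta h^6)$ and is discarded'' is morally the same observation but should be made precise as an absorption, not a discard. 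Neither issue is a wrong idea, but both deserve to be spelled out before the argument is complete.
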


\section{Combining with Shifted Composition}\label{scn:discussion}
In all of the algorithms above, one hopes to take advantage of the \emph{weak error} of the schemes. More precisely, prior work~\cite{scr3} has shown that the quality of the approximation can be understood in terms of the weak and strong errors of the algorithm, quantified on average over $\bs \mu_t$.
\begin{lemma}[KL local error framework for~\eqref{eq:ULD}]\label{lem:local_error}
    Let $\bs P$ denote the Markov kernel for~\eqref{eq:ULD}, run for time $h$, and let ${\bs P^\alg}$ be the kernel of a numerical discretization.
    Assume that for all $x,\tilde x,p,\tilde p\in\R^d$, there are random variables $(X_h, P_h) \sim \delta_{x,p}\bs P$, $(X_h^\alg,P_h^\alg) \sim \delta_{x,p} \bs P^\alg$ such that the following conditions hold, for $\bs \mu_{kh} = \bs \mu_0 (\bs P^{\alg})^k$,
    \begin{enumerate}
        \item (weak error)
        \begin{align*}
            \sup_{k \in [N]}\Bigl\{h^{-1} \E_{(x,p) \sim \bs \mu_{kh}} [\,\norm{\E X^\alg_h - \E X_h}^2] \vee \E_{(x,p) \sim \bs \mu_{kh}} [\norm{\E P^\alg_h - \E P_h}^2]\Bigr\} \le (\bar{\mc E}^{\rm w})^2\,.
        \end{align*}
        \item (strong error)
        \begin{align*}
            \sup_{k \in [N]}\Bigl\{h^{-1} \E_{(x,p) \sim \bs \mu_{kh}} [\,\norm{X^\alg_h - X_h}^2] \vee \E_{(x,p) \sim \bs \mu_{kh}} [\norm{P^\alg_h - P_h}^2]\Bigr\} \le (\bar{\mc E}^{\rm s})^2\,.
        \end{align*}
        \item (cross-regularity) 
        \begin{align*}
            \E_{x,p \sim \bs \mu_{(N-1)h}}[\KL(\delta_{x,p} \bs P^\alg \mmid \delta_{\tilde x,\tilde p} \bs P)] &\lesssim (\gamma h^3)^{-1}\E_{x,p \sim \bs \mu_{(N-1)h}}[\,\norm{x-\tilde x}^2] \\
            &\qquad+ (\gamma h)^{-1}\E_{x,p \sim \bs \mu_{(N-1)h}}[\,\norm{p-\tilde p}^2] + {\bar b}^2\,.
        \end{align*}
    \end{enumerate}
    Let $\alpha > 0$, $\gamma = \sqrt{32\beta}$, $\omega \deq \alpha/(3\gamma)$, and assume additionally that $h \lesssim 1/(\beta^{1/2}\kappa)$, $\kappa \deq \beta/\alpha$.
    Adopt Assumption~\ref{as:regularity}.
    Then, for all $\bs\mu_0 \in \mc P_2(\R^d\times\R^d)$ and $\bs \pi$ the stationary distribution of~\eqref{eq:ULD},
    \begin{align*}
        \KL(\bs\mu_{Nh} \mmid \bs\pi)
        &\lesssim  \Bigl(\frac{1}{\gamma}\Bigl(\frac{\omega}{\exp(c\omega T) - 1} \Bigr)^3 +\frac{\gamma \omega}{\exp(c\omega T) - 1}\Bigr)\,\mc W_2^2(\bs \mu_0,\bs \pi) + \mathtt{Err} + \bar b^2\,.
    \end{align*}
    Here, $\mc W_2$ denotes $2$-Wasserstein distance in the twisted norm $(x,p)\mapsto \sqrt{\norm x^2 + \gamma_0^{-2}\,\norm p^2}$, with    \begin{align*}
        \gamma_0
        &\gtrsim \gamma + \frac{\one_{T\le 1/\omega}}{T}\,.
    \end{align*}
    Then,
        \begin{align*}
            \mathtt{Err}
            &\lesssim \frac{1}{\alpha h^2}\,(\bar{\mc E}^{\rm w})^2 + \Bigl[\frac{1}{\beta^{1/2} h} \log\frac{1}{\omega h}\Bigr]\,(\bar{\mc E}^{\rm s})^2\,.
        \end{align*}
\end{lemma}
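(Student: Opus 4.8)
The plan is to instantiate the shifted composition framework of~\cite{scr3}; the lemma is essentially a repackaging of that machinery for the weak/strong/cross-regularity inputs stated here, so the task reduces to checking that hypotheses~1--3 match what that analysis consumes and then running its bookkeeping. Write $\bs\mu_{kh} = \bs\mu_0\,(\bs P^\alg)^k$ and recall $\bs\pi = \bs\pi\,\bs P^N$ by stationarity of the exact~\eqref{eq:ULD} kernel $\bs P$. Because $\bs P^\alg$ is non-adapted and enjoys no useful data-processing inequality against $\bs P$, one cannot peel off steps directly; instead one works with the \emph{shifted divergence} $\KL^{(a)}(\nu \mmid \nu') \deq \inf\{\KL(\nu'' \mmid \nu') : \mc W_2(\nu,\nu'') \le a\}$ in the twisted norm and a decreasing sequence of shift budgets $a_0 > a_1 > \dots > a_N = 0$. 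The shifted composition rule then telescopes, schematically,
\[
    \KL(\bs\mu_{Nh} \mmid \bs\pi) \;\le\; \sum_{k=1}^{N} \Bigl[\text{one-step local KL at step }k\text{, with shift }a_{k-1}\Bigr] \;+\; \bigl(\text{$\mc W_2^2$-initialization term}\bigr),
\]
where at each step the contractivity of $\bs P$ in the twisted $\mc W_2$ metric --- with the slow position rate $\omega$ and fast momentum rate $\gamma$ --- converts a genuine decrease $a_{k-1}\to a_k$ of the shift budget into the per-step local cost.

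The per-step cost is controlled by cross-regularity (hypothesis~3): comparing one step of $\bs P^\alg$ started at $(x,p)$ against one step of $\bs P$ started at a target $(\tilde x,\tilde p)$ within the current shift budget, hypothesis~3 bounds the KL by $(\gamma h^3)^{-1}\norm{x-\tilde x}^2 + (\gamma h)^{-1}\norm{p-\tilde p}^2 + \bar b^2$. The decisive move is to place $(\tilde x,\tilde p)$ so as to cancel the \emph{mean} discrepancy between the algorithm and exact one-step maps started from the same point --- this residual is exactly the weak error of hypothesis~1 --- so that only the fluctuation about the mean, bounded by the strong error of hypothesis~2, must be paid for through the cross-regularity weights. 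Consequently the weak error enters only via the Wasserstein side of the ledger and, after summation against the contraction factors, acquires the mild coefficient $\tfrac{1}{\alpha h^2}$; the strong error is instead forced through the position weight $(\gamma h^3)^{-1}$ and momentum weight $(\gamma h)^{-1}$, and the combination of the cubic position weight, the $h^{-1}$-normalization of the strong error, and the slow ($\omega$) position contraction yields, after summing over the $N$ steps, the coefficient $\tfrac{1}{\beta^{1/2} h}\log\tfrac{1}{\omega h}$, using $\gamma = \sqrt{32\beta}$. The additive $\bar b^2$ from hypothesis~3 passes through the telescoping unchanged.

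It remains to collapse the geometric sums. With $\omega = \alpha/(3\gamma)$ and horizon $T = Nh$, the product of per-step contraction factors over the run is $\asymp \exp(-c\omega T)$; summing the initialization contribution against these factors while keeping the position channel (decaying like $\bigl(\tfrac{\omega}{\exp(c\omega T)-1}\bigr)^3$ with a $\gamma^{-1}$ prefactor, reflecting the cubic position weight in cross-regularity) separate from the momentum channel (decaying like $\tfrac{\gamma\omega}{\exp(c\omega T)-1}$) reproduces the stated prefactor on $\mc W_2^2(\bs\mu_0,\bs\pi)$, with the $\gamma_0 \gtrsim \gamma + \one_{T\le 1/\omega}/T$ normalization coming from the short-horizon regularization of the twisted metric. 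The hypotheses $\gamma = \sqrt{32\beta}$ and $h \lesssim 1/(\beta^{1/2}\kappa)$ ensure $\omega h \lesssim 1$, so the single-step contraction is genuinely operative and the $\log\tfrac{1}{\omega h}$ is well-defined; Assumption~\ref{as:regularity} supplies the smoothness needed for the underlying ULD contraction estimate.

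The main obstacle is not the telescoping itself, which is~\cite{scr3}'s rule applied off the shelf, but the two ingredients it requires: (i) the contraction of the exact~\eqref{eq:ULD} kernel in precisely the twisted norm $(x,p)\mapsto \sqrt{\norm{x}^2 + \gamma_0^{-2}\norm{p}^2}$ with the stated position/momentum rates --- a coupling computation for the underdamped diffusion that is standard but acutely sensitive to the choice of metric and to the $\gamma_0$-normalization at short horizons; and (ii) the shift-allocation bookkeeping that keeps the weak error in the cheap $1/(\alpha h^2)$ slot while only the strong error occupies the expensive, log-bearing slot --- mishandling this split would either degrade the downstream dimension dependence or inject spurious powers of $\kappa$. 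Both are carried out in~\cite{scr3}, so the proof here reduces to verifying that the present hypotheses~1--3, together with $\gamma = \sqrt{32\beta}$ and the step-size restriction, are exactly the inputs that argument requires.
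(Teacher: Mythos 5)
Your proposal matches the paper's approach: the paper does not supply its own proof of this lemma but imports it directly from the shifted composition framework of~\cite{scr3,scr4} (the underdamped version appears in~\cite{scr4}; the paper itself points to~\cite[Theorem~1.2]{scr3} for the overdamped analogue). Your high-level account of the mechanism --- shift budgets absorbing the weak (bias) error, strong (fluctuation) error routed through the $(\gamma h^3)^{-1}$ and $(\gamma h)^{-1}$ cross-regularity weights, the additive $\bar b^2$ passing through unchanged, and the exact kernel's contraction in the twisted $\mc W_2$ metric collapsing the telescoping sum --- is a faithful summary of what that cited argument does, so the reduction you perform (verifying that hypotheses~1--3 and the step-size restrictions are exactly what that machinery consumes) is precisely the paper's implicit proof.
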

An analogous version holds for~\eqref{eq:langevin_diff}, without the need to distinguish between two separate coordinates. See~\cite[Theorem 1.2]{scr3}.

\subsection{Improved cross-regularity guarantees for M--LMC and DM--ULMC}
Both algorithms~\eqref{eq:M-LMC} and~\eqref{eq:DM--ULMC-multistep} can be viewed as deterministic equivalents of the over- and underdamped randomized midpoint algorithms.

In~\cite{scr3, scr4}, a key ingredient in these proofs is the cross-regularity of the diffusion. For~\eqref{eq:M-LMC}, this was proven in a somewhat circuitous way in~\cite[Lemma 6.2]{scr3}, conditioning on the probability that the interpolant time satisfies $\tau_k \leq (1-\delta) h$ for small $\delta$ and then integrating over this distribution. This causes an additional $\log \frac{1}{\beta h}$ to appear in their cross-regularity bounds. Instead, we note that Corollary~\ref{cor:ormd-error} directly provides a bound on the cross-regularity with
\begin{align*}
    \E_{x \sim \bs \mu_{(N-1)h}}[\Renyi_q(\delta_x \bs P^\alg \mmid \delta_y \bs P)] \lesssim \frac{\norm{x-y}^2}{h} + \beta^2 d h^2 q\,,
\end{align*}
and furthermore that this bound has been improved to the $\Renyi_q$ divergence.

For~\eqref{eq:DM--ULMC-multistep}, the work~\cite{scr4} considers a modified algorithm in which all but the final step uses DM--ULMC. They are unable to complete the proof using DM--ULMC alone; instead, one analyzes the composition of this algorithm with a single step of the standard, exponential Euler discretization (ULMC). On the other hand, Lemma~\ref{lem:renyi-dmd-coarse} shows that cross-regularity in Lemma~\ref{lem:local_error} holds with
\begin{align*}
    (\bar b)^2 \lesssim \frac{\beta^2 h^3}{\gamma} \Bigl(d + \Renyi_3(\bs \mu_{(N-1)h} \mmid \bs \pi)\Bigr)\,.
\end{align*}
As generally $\Renyi_3(\bs \mu_{(N-1)h} \mmid \bs \pi) \lesssim d \log \kappa$ for a reasonable initialization in our setting, then this gives a bound of
\begin{align*}
    (\bar b)^2 \lesssim \frac{\beta^2 dh^3}{\gamma} \log \kappa\,,
\end{align*}
which is sufficient for the $O(d^{1/3})$ bounds obtained in that paper. We again note that our cross-regularity is obtained in R\'enyi divergence rather than $\KL$. Since changing the algorithm in the last step is arguably unnatural, this result fills an important gap in their work.

\subsection{Sharper bounds for deterministic DM--ULMC}

For~\eqref{eq:DM--ULMC-multistep} with deterministic midpoints, we claim a completely new bound in $\KL$ divergence which has not yet appeared in the literature. First, we give the local errors under the two kernels.
\begin{lemma}[Local error analysis]\label{lem:dmd-local-error}
If $h \lesssim \frac{1}{\sqrt{\beta}}$, $\Renyi_2(\bs \mu_0 \mmid \bs \pi) \lesssim d^{1/2}$, $\gamma \asymp \sqrt{\beta}$, Assumptions~\ref{as:regularity} and~\ref{as:chaos-tail} hold, then letting $(X_t, P_t)_{t \in [0, h)}, (X_t^\alg, P_t^\alg)_{t \in [0, h)}$ be two synchronously coupled processes following~\eqref{eq:ULD} and~\eqref{eq:DM--ULMC-multistep} respectively, both started at $(x, p)$, we have the bounds
\begin{align}\label{eq:one-step-high-prob}
\begin{aligned}
     \E_{(x,p) \sim \bs\mu_0}\Bigl[h^{-2}\E[\norm{X_h - X_h^\alg}^2] \vee \E[\norm{P_h - P_h^\alg}^2]\Bigr] &\lesssim {\beta^2 \gamma d h^5} + {(\beta_H^2 + \beta_\eff^2)d h^6 } \\
    \E_{(x,p) \sim \bs\mu_0}\Bigl[h^{-2}\norm{\E X_h - \E X_h^\alg}^2 \vee \norm{\E P_h - \E P_h^\alg}^2\Bigr] &\lesssim {\beta^3 d h^6} + {(\beta_H^2 + \beta_\eff^2) d h^6}\,.
\end{aligned}
\end{align}
We also obtain analogous high probability bounds to match each of these expressions. 
\end{lemma}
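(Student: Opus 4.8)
The plan is to compare the two synchronously coupled trajectories through the difference of their effective drifts, to reduce the estimate to a single-step \emph{discretization bias}, and then to Taylor-expand that bias, using the specific midpoint choices $\tau_k^- = h/3$ and $\tau_k^+ = h/2$ to annihilate the leading-order contributions. First I would set up the coupled difference $(E_t^X, E_t^P) \deq (X_t - X_t^\alg, P_t - P_t^\alg)$: since both processes start at $(x,p)$ and see the same Brownian motion, it solves a linear integral equation driven by two sources, the Lipschitz term $\nabla V(X_s) - \nabla V(X_s^\alg)$ and the pure discretization bias, which by the definition of $\lambda_{k,1},\lambda_{k,2}$ in~\eqref{eq:DM--ULMC-multistep} is a fixed kernel transform of the single-midpoint quadrature errors
\begin{align*}
    \operatorname{diff}^\msp &= \int_0^h \eia(t,h)\, \nabla V(X_t)\, \D t - \eib(0,h)\, \nabla V(X^+)\,, \\
    \operatorname{diff}^\msx &= \int_0^h \eib(t,h)\, \nabla V(X_t)\, \D t - \eic(0,h)\, \nabla V(X^-)\,,
\end{align*}
with the $\eia$- and $\eib$-weighted rules evaluated at the nodes $\tau^+$ and $\tau^-$. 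For $h \lesssim 1/\sqrt\beta$ and $\gamma \asymp \sqrt\beta$, a \Gronwall/contraction argument of the standard discretization type (cf.~\cite{chewisamplingbook}) bounds $\norm{E_h^X}$ and $\norm{E_h^P}$, up to absolute constants and genuinely higher-order-in-$h$ corrections, by the size of this bias; since $\norm{X_s - X_s^\alg}$ is itself of that order, all gradients and interpolants may be evaluated along the true~\eqref{eq:ULD} trajectory in what follows.

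Next I would Taylor-expand. Writing $X_t - x = \eib(0,t)\, p + W_t - \eic(0,t)\, \nabla V(x) + R_t$, with $W_t = \sqrt{2\gamma}\int_0^t \eib(s,t)\,\D B_s$ Gaussian of covariance $\asymp \gamma t^3 I_d$ and $R_t$ the second-order drift remainder, and expanding $\nabla V(X_t) = \nabla V(x) + \nabla^2 V(x)\,(X_t - x) + \tfrac12 \nabla^3 V(x)[(X_t - x)^{\otimes 2}] + (\text{rem.})$ (and $X^\pm$ analogously), the key point is that matching the low-order moments of $\eia,\eib,\eic$ shows that $\tau^+ = h/2$ (resp.\ $\tau^- = h/3$) makes the $\eia$-weighted (resp.\ $\eib$-weighted) single-midpoint rule exact, to leading order, on integrands affine in $t$. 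Hence the $\nabla V(x)$ term and the affine-in-$t$ part of $\nabla^2 V(x)(X_t-x)$ drop out of $\operatorname{diff}^\msp$ and $\operatorname{diff}^\msx$, leaving a residual of order $h^3$ for $\operatorname{diff}^\msp$ and $h^4$ for $\operatorname{diff}^\msx$ (the kernel $\eib$ supplying the extra power of $h$), multiplied by one of three kinds of second-order coefficient: (i) the noise increment $W_t$, producing $\nabla^2 V(x) W_t$ of $L^2$-size $\lesssim \beta\sqrt{\gamma}\, h^{7/2}\sqrt d$ in the position bias, which is the term that dominates the strong error; (ii) the deterministic curvature $\nabla^2 V(x)\nabla V(x)$, of $L^2$-size $\lesssim \beta^{3/2}\sqrt d$ under~\eqref{eq:lsi} and the warm-start hypothesis; and (iii) the third-derivative forms $\nabla^3 V(x)[(X_t-x)^{\otimes 2}]$, dominated by $\nabla^3 V(x)[p,p]$ and $\nabla^3 V(x)[p, W_t]$, whose size is controlled by Assumption~\ref{as:chaos-tail}: transferring the Gaussian concentration of the momentum marginal of $\bs\pi$ to $\bs\mu_0$ via $\Renyi_4(\bs\mu_0 \mmid \bs\pi) \lesssim d^{1/2}$ and a change of measure gives $\norm{\nabla^3 V(x)[p,p]}^2 \lesssim (\beta_H^2 + \beta_\eff^2)\, d$ up to logarithms.

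Finally I would assemble the two bounds. For the strong error, squaring and taking expectations, the dominant contribution is (i), yielding $\E[\norm{X_h - X_h^\alg}^2] \lesssim \beta^2 \gamma d\, h^7 + (\beta_H^2+\beta_\eff^2)\, d\, h^8$ and, by the same expansion, $\E[\norm{P_h - P_h^\alg}^2] \lesssim \beta^2\gamma d\, h^5 + (\beta_H^2+\beta_\eff^2)\, d\, h^6$, so that after the $h^{-2}$ normalization of the position term in the statement both sides of the $\vee$ are $\lesssim \beta^2\gamma d h^5 + (\beta_H^2+\beta_\eff^2) d h^6$, where the $\beta^3 d h^6$ term coming from (ii) has been absorbed into $\beta^2\gamma d h^5$ using $\gamma\asymp\sqrt\beta$ and $h\lesssim1/\sqrt\beta$. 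For the weak error, one takes expectations \emph{before} squaring: the noise term (i) is mean zero and vanishes, leaving only the deterministic curvature (ii) and the conditional mean of the third-derivative term (the $\operatorname{Cov}$-contraction $\nabla^3 V(x)[\gamma t^3 I_d + \cdots]$ being of lower order than the $t^2\,\nabla^3 V(x)[p,p]$ piece), which after $\E_{\bs\mu_0}$ and division by $h^2$ give $\norm{\E X_h - \E X_h^\alg}^2 \vee \norm{\E P_h - \E P_h^\alg}^2 \lesssim \beta^3 d h^6 + (\beta_H^2+\beta_\eff^2) d h^6$. The high-probability versions follow verbatim, replacing the $L^2$ estimates on $W_t$ and on $\nabla^3 V(x)[\xi,\xi]$ by their sub-Gaussian tails and the tail bound of Assumption~\ref{as:chaos-tail}, and again porting Gaussian concentration to $\bs\mu_0$ through the Rényi warm-start.

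The hard part will be the bookkeeping in the Taylor expansion: pinning down exactly which terms are annihilated by the choices $\tau^- = h/3$ and $\tau^+ = h/2$ (which forces one to compute and match several low-order moments of $\eia,\eib,\eic$), keeping the implicit dependence of $X_t$ on $\msf G_t^\opt$ and on $\lambda_{k,i}$ at the correct order, and---most delicately---routing the third-derivative terms through Assumption~\ref{as:chaos-tail} rather than through the lossy bound $\norm{\nabla^3 V(x)[p,p]} \le \beta_H\norm{p}^2$, which is precisely why control of $\operatorname{law}(P_0)$ via the $\Renyi_4$ hypothesis enters. The detailed computation is deferred to the appendix.
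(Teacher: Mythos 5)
Your proposal has the right core mechanism — Taylor/Itô expansion of $\nabla V(X_t)$ along the true trajectory, exploitation of the choices $\tau^+ = h/2$, $\tau^- = h/3$ to kill the leading affine-in-$t$ part, and Assumption~\ref{as:chaos-tail} to tame the residual third-derivative term — and your order-of-magnitude bookkeeping lands on the same exponents as the paper. However, your opening reduction is more complicated than it needs to be, and differs from the paper's. You treat $(X_t^\alg,P_t^\alg)$ as solving an integral equation driven both by a Lipschitz term $\nabla V(X_s)-\nabla V(X_s^\alg)$ and the quadrature bias, and invoke a \Gronwall/contraction argument to close the loop. But the interpolation~\eqref{eq:DM--ULMC-multistep} is constructed (via the choice of $\lambda_{k,1},\lambda_{k,2}$) precisely so that its time-$h$ marginal is the \emph{explicit exponential-integrator formula}~\eqref{eq:DM--ULMC-marginal}, with $X^\pm$ depending only on $(x,p,B_{[0,h]})$. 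Under the synchronous coupling one therefore has the exact identities $P_h-P_h^\alg = \int_0^h \eia(t,h)\,\{\nabla V(X^+)-\nabla V(X_t)\}\,\dt$ and $X_h-X_h^\alg = \int_0^h \eib(t,h)\,\{\nabla V(X^-)-\nabla V(X_t)\}\,\dt$ — no propagated Lipschitz error, no \Gronwall. Your detour is not wrong (it yields the same bound up to a harmless $1+O(\beta h^2)$ factor), but it obscures the fact that the reduction is an equality and spends effort on an anticipating-SDE stability argument that adds nothing.

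The other notable difference is the Taylor base point. You expand around the initial $x$ and rely on matching low-order moments of $\eia,\eib,\eic$ against the single-midpoint rule. The paper instead expands around the \emph{true} midpoint state $X_{h/2}$ (resp.\ $X_{h/3}$), symmetrizes the inner double integral, and applies It\^o to $\nabla^2 V(X_r)P_r$; this makes the affine cancellation structural (e.g.\ $\int_0^h (h-t)(t-h/3)\,\dt = 0$) and cleanly isolates the four residual contributions $\nabla^3 V[P,P]$, $\gamma\nabla^2 V\,P$, $\nabla^2 V\,\nabla V$, and the martingale $\nabla^2 V\,\D B$ — the last of which dominates the strong error and vanishes in mean for the weak error, exactly as you observe. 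Both expansion points work; the paper's version avoids having to track $R_t$ separately. Finally, two small slips: the hypothesis is $\Renyi_2$, not $\Renyi_4$; and you should be careful that the step $\lvert X_{h/2}-X^+\rvert$ (and $\lvert X_{h/3}-X^-\rvert$) really is higher order — in the paper this is~\eqref{eq:same-time-interpolant} — before declaring the interpolant can be replaced by the true-trajectory value.
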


With all these pieces, we can prove a bound with $\Otilde(d^{1/4})$ dependence through an elaborate two-phase argument. We again assume $\Renyi_3(\bs \mu_0 \mmid \bs \pi) \lesssim d \log \kappa$, which is reasonable in light of~\cite[\S D]{zhang2023improved}.
\begin{theorem}\label{thm:dmd-final}
    If Assumptions~\ref{as:regularity} and~\ref{as:chaos-tail} hold, $\nabla^2 V \succeq \alpha I_d$, $\alpha \geq 0$, $\gamma \asymp \sqrt{\beta}$ and $\Renyi_3(\bs \mu_0 \mmid \bs \pi) \lesssim d \log \kappa$, then if $\varepsilon$ sufficiently small and $h = \widetilde{\Theta}(\frac{\varepsilon^{1/2}}{\beta^{1/2} \kappa^{1/4} d^{1/4}} \wedge \frac{\alpha^{1/4} \varepsilon^{1/2}}{(\beta_H^2 + \beta_\eff^2)^{1/4} d^{1/4}})$, then $\KL(\bs \mu_0 \hat P^N \mmid \bs \pi) \leq \varepsilon^2$ with
    \begin{align*}
        N = \widetilde{\Theta}\Bigl(\frac{\kappa^{5/4} d^{1/4}}{\varepsilon^{1/2}} \vee \frac{(\beta_H^2 + \beta_\eff^2)^{1/4} \kappa^{1/2}  d^{1/4}}{\alpha^{3/4} \varepsilon^{1/2}} \Bigr)\,.
    \end{align*}
\end{theorem}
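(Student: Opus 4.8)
The plan is to prove Theorem~\ref{thm:dmd-final} by combining three ingredients already in place: the $\KL$ local-error framework for \eqref{eq:ULD} (Lemma~\ref{lem:local_error}), the sharp one-step local errors for \eqref{eq:DM--ULMC-multistep} (Lemma~\ref{lem:dmd-local-error}), and the crude Rényi regularity bound (Lemma~\ref{lem:renyi-dmd-coarse}), the last used both as the cross-regularity input and as a burn-in device. The feature forcing a two-phase argument is that Lemma~\ref{lem:dmd-local-error} — the estimate that actually produces the $d^{1/4}$ scaling — presupposes the strong warm start $\Renyi_2(\bs\mu_\cdot \mmid \bs\pi)\lesssim d^{1/2}$, whereas we are only granted $\Renyi_3(\bs\mu_0\mmid\bs\pi)\lesssim d\log\kappa$. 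Note also that $\nabla^2 V\succeq\alpha I_d$ already yields \eqref{eq:lsi} with constant $1/\alpha$ by Bakry--\'Emery, so all the functional-inequality hypotheses of the cited lemmas hold.

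\textbf{Phase 1 (burn-in).} First I would run \eqref{eq:DM--ULMC-multistep} with a moderately large step size $h_1$ and bound $\Renyi_q(\bs\mu_{kh_1}\mmid\bs\pi)$ by chaining the weak triangle inequality for Rényi divergence (Lemma~\ref{lem:renyi-triangle}), the exponential contraction of \eqref{eq:ULD} in Rényi under \eqref{eq:lsi} (\S\ref{scn:cvg}), and the discretization error $\Renyi_q(\mathbf P\mmid\mathbf Q)\lesssim \beta^2 h_1^2 qT(d+\Renyi_3(\bs\mu_0\mmid\bs\pi))/\gamma$ from Lemma~\ref{lem:renyi-dmd-coarse}. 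Choosing $h_1$ so this discretization floor sits below $d^{1/2}$ and running for the mixing time $T_1\asymp\omega^{-1}\operatorname{polylog}$ drives $\Renyi_3$ down to $O(d^{1/2})$; a short computation shows this costs $\widetilde O(\kappa^{3/2}d^{1/4})$ gradient queries, which is dominated by the Phase-2 cost once $\varepsilon$ is small enough. Crucially, once the Rényi divergence reaches $O(d^{1/2})$ it stays there (the coarse floor is well below $d^{1/2}$ for every step size $\le h_1$), so the precondition of Lemma~\ref{lem:dmd-local-error} holds at every subsequent iterate — both in the tail of Phase 1 and throughout Phase 2, certified by running the same coarse Rényi bookkeeping in parallel at the Phase-2 step size.

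\textbf{Phase 2 (refinement).} Now I would invoke Lemma~\ref{lem:local_error} with $\gamma\asymp\sqrt\beta$, so $\omega\asymp\alpha/\sqrt\beta$ and the mixing time is $T\asymp\omega^{-1}\log(1/\varepsilon)\operatorname{polylog}=\widetilde\Theta(\kappa/\sqrt\beta)$; for $T=Nh$ this makes the contraction term control $\mc W_2^2(\bs\mu_0,\bs\pi)$, which is of order $(d\log\kappa)/\alpha$ (Talagrand, up to the twisting constant), down to $\varepsilon^2$. Feeding in the sharp local errors of Lemma~\ref{lem:dmd-local-error} at every $\bs\mu_{kh}$ gives $(\bar{\mc E}^{\rm w})^2\lesssim(\beta^3+\beta_H^2+\beta_\eff^2)dh^6$ and $(\bar{\mc E}^{\rm s})^2\lesssim\beta^{5/2}dh^5+(\beta_H^2+\beta_\eff^2)dh^6$, so $\mathtt{Err}\lesssim\beta^2\kappa dh^4+(\beta_H^2+\beta_\eff^2)dh^4/\alpha$ plus strictly lower-order terms, while the cross-regularity additive constant is $\bar b^2\lesssim\beta^2 h^3(d+\Renyi_3(\bs\mu_{(N-1)h}\mmid\bs\pi))/\gamma$ via Lemma~\ref{lem:renyi-dmd-coarse}. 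Imposing $\mathtt{Err}+\bar b^2\lesssim\varepsilon^2$ and solving for the largest admissible step size yields $h=\widetilde\Theta\bigl(\frac{\varepsilon^{1/2}}{\beta^{1/2}\kappa^{1/4}d^{1/4}}\wedge\frac{\alpha^{1/4}\varepsilon^{1/2}}{(\beta_H^2+\beta_\eff^2)^{1/4}d^{1/4}}\bigr)$, and then $N=T/h$ gives exactly the two-term bound in the statement.

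\textbf{Main obstacle.} The delicate part is the simultaneous balancing in Phase 2: one must verify that it is the $\mathtt{Err}$ terms — not the cross-regularity floor $\bar b^2\lesssim\beta^2 dh^3/\gamma$, and not the contraction term — that determine $h$ at the claimed value, which in turn pins down the range of $\varepsilon$ (roughly $\varepsilon$ small but not so small that $\bar b^2$ dominates) on which the $d^{1/4}$ rate is genuine rather than degrading to the $\widetilde O(\kappa d^{1/3}/\varepsilon^{2/3})$ rate of \cite{scr4}. One must also track the $(\beta_H^2+\beta_\eff^2)$-dependent contributions carefully: these rely on Assumption~\ref{as:chaos-tail} and on the specific midpoints $\tau_k^-=h/3$, $\tau_k^+=h/2$ that force the leading-order cancellation underlying Lemma~\ref{lem:dmd-local-error}, and they must be carried through the high-probability versions of the local-error estimates so that the framework's in-expectation hypotheses are met. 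Getting all the $h$-power bookkeeping consistent across both phases, and confirming Phase 1 is asymptotically negligible, is the bulk of the work; the remainder is a direct assembly of the cited lemmas.
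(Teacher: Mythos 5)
Your two-phase outline is the right skeleton and matches the paper's proof: burn in until $\Renyi_4(\bs\mu_{kh}\mmid\bs\pi)\lesssim d^{1/2}$ so the preconditions of Lemma~\ref{lem:dmd-local-error} and Theorem~\ref{thm:dmd-sharp-one-step} hold, then invoke the local-error framework (Lemma~\ref{lem:local_error}) with the sharp local errors. The paper implements the burn-in with the \emph{same} step size $h$ over the first half of the horizon, reading $\Renyi_8(\mathbf P\mmid\mathbf Q)\lesssim d^{1/2}$ directly off Lemma~\ref{lem:renyi-dmd-coarse} under the ``$\varepsilon$ sufficiently small'' hypothesis; your two-step-size schedule would work too, but it proves a variant of the stated single-kernel claim $\KL(\bs\mu_0\hat P^N\mmid\bs\pi)\le\varepsilon^2$.

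The genuine gap is in the cross-regularity constant you feed into Lemma~\ref{lem:local_error}. You take $\bar b^2\lesssim\beta^2 dh^3/\gamma$ from the crude Lemma~\ref{lem:renyi-dmd-coarse}, and — as you yourself flag in your ``main obstacle'' paragraph — this floor is fatal: $\bar b^2\lesssim\varepsilon^2$ forces $h\lesssim\varepsilon^{2/3}\gamma^{1/3}/(\beta^{2/3}d^{1/3})$, which with $\gamma\asymp\sqrt\beta$ is \emph{smaller} than the claimed $h\asymp\varepsilon^{1/2}/(\beta^{1/2}\kappa^{1/4}d^{1/4})$ as soon as $\varepsilon\lesssim d^{1/2}/\kappa^{3/2}$, so your argument only recovers the $d^{1/3}/\varepsilon^{2/3}$ rate of \cite{scr4}, not the theorem's $d^{1/4}/\varepsilon^{1/2}$, for genuinely small $\varepsilon$. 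The paper avoids this (step 3 of its proof) by deriving the sharper $\bar b^2\lesssim\beta^2 dh^4+(\beta_H^2+\beta_\eff^2)dh^5/\gamma$: it splits $\KL(\delta_{x,p}\bs P^\alg\mmid\delta_{\tilde x,\tilde p}\bs P)$ via the weak triangle inequality (Lemma~\ref{lem:renyi-triangle}) into a same-start term $\KL(\delta_{x,p}\bs P^\alg\mmid\delta_{x,p}\bs P)$ controlled by the sharp one-step regularity Theorem~\ref{thm:dmd-sharp-one-step} (this is precisely what the Phase-1 warm start buys you), plus a pure-ULD term $\Renyi_2(\delta_{x,p}\bs P\mmid\delta_{\tilde x,\tilde p}\bs P)$ controlled by the Harnack estimate Lemma~\ref{lem:uld_regularity}. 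That extra power of $h$ is what makes the cross-regularity floor subordinate to $\mathtt{Err}$ at the claimed $h$, and it is the ingredient missing from your Phase 2. A smaller slip: \S\ref{scn:cvg} has no Rényi contraction result for~\eqref{eq:ULD}; the paper's choice of mixing time rests on Lemma~\ref{lem:uld_regularity}, so your Phase-1 contraction step should cite that rather than~\S\ref{scn:cvg}.
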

In particular, if we treat $(\beta_H^2 + \beta_\eff^2) \lesssim \beta^3$, then this rate is $\frac{\kappa^{5/4} d^{1/4}}{\varepsilon^{1/2}}$.

\begin{proof}
\begin{enumerate}
    \item Fix the terminal time horizon $T$ with $\frac{T}{2} \asymp \frac{\gamma}{\alpha} \log \frac{\Renyi_3(\bs \mu_0 \mmid \bs \pi)}{\varepsilon^2}$ based on the time that~\eqref{eq:ULD} needs to converge to $\varepsilon^2$-approximate stationarity, for $\varepsilon^2 \lesssim d^{1/2}$ by Lemma~\ref{lem:uld_regularity}.

    \item Now, apply Lemma~\ref{lem:renyi-dmd-coarse} with our choice of step-size and initialization $\Renyi_3(\bs \mu_0 \mmid \bs \pi) \lesssim d \log \kappa$. We have for the choice of $T$ fixed above that
    \begin{align*}
        \Renyi_8(\mathbf P \mmid \mathbf Q) \lesssim d^{1/2}\,.
    \end{align*}
    In particular, by the R\'enyi triangle inequality (Lemma~\ref{lem:renyi-triangle}), this implies that if we consider the marginal $\bs \mu_{kh}$ for any $kh > T/2$, we have $\Renyi_4(\bs \mu_{kh} \mmid \bs \pi) \lesssim d^{1/2}$.

    \item Now, considering $T/2$ as the ``starting time'' in Lemma~\ref{lem:dmd-local-error}, with the coarse guarantee $\Renyi_4(\bs \mu_{kh} \mmid \bs \pi) \lesssim d^{1/2}$ for any $k \geq \frac{T}{2h}$. For our choice of step-size, we have local errors of order
    \begin{align*}
        (\bar{\mc E}^{\mathrm s})^2 &\lesssim {\beta^2 \gamma d h^5} + {(\beta_H^2 + \beta_\eff^2) d h^6} \\
    (\bar{\mc E}^{\mathrm w})^2 &\lesssim {\beta^3 d h^6} + {(\beta_H^2 + \beta_\eff^2) d h^6}\,.
    \end{align*}
    Note that these do not hold pointwise, but only in expectation under the coarse warm-start from Lemma~\ref{lem:renyi-dmd-coarse}. Furthermore, we can bound $\bar b$ in the cross-regularity by
    \begin{align*}
        \bar b^2 \lesssim \beta^2 dh^4 + \frac{(\beta_H^2 + \beta_\eff^2) d h^5}{\gamma}\,,
    \end{align*}
    as we have via joint convexity of the $\KL$-divergence and Lemma~\ref{lem:renyi-triangle},
    \begin{align*}
        \E_{x, p \sim \bs \mu_{(N-1)h}} \Bigl[\KL(\delta_{x,p} \bs P^\alg \mmid \delta_{\tilde x, \tilde p} \bs P)\Bigr] &\lesssim \KL(\bs \mu_{(N-1)h} \bs P^\alg \mmid \bs \mu_{(N-1)h} \bs P) \\
        &\qquad+ \E_{x,p \sim \bs \mu_{(N-1)h}} \Bigl[\Renyi_2(\delta_{x, p} {\bs P} \mmid \delta_{\tilde x, \tilde p} \bs P)\Bigr]\,.
    \end{align*}
    The first term is handled
    via Theorem~\ref{thm:dmd-sharp-one-step}, while the second is handled via Lemma~\ref{lem:uld_regularity}. This is the same argument as in~\cite[Lemma 4.2]{scr4}.

    \item Finally, appealing to the local error framework in Lemma~\ref{lem:local_error}, we see that our chosen step-size is indeed sufficient to guarantee our final $\KL$ error bounds.
\end{enumerate}
\end{proof}

As far as we know, this is the first end-to-end rate which achieves $O(d^{1/4})$ dependence in $\KL$-divergence under generic assumptions, which implies $O(d^{1/4} \varepsilon^{-1/2})$ rates in the total variation metric as well. For the very specific case of ridge-separable potentials, this rate was obtained (with worse $\kappa$ dependence) in~\cite{mou2021high} using a different algorithm, but their rate does not generalize well due to their inability to exactly simulate the algorithm in other settings.

As our guarantee is in $\KL$ and not $\Renyi_q$, it is not yet sufficient to warm start the algorithm in~\cite{chen2023does} and achieve $O(d^{1/4} \operatorname{polylog} 1/\varepsilon)$ complexity. Furthermore, our results also come under a slightly stronger assumption (Assumption~\ref{as:chaos-tail}). The inability to warm start is because we need to ensure that bad events which have exponentially small probability under $\bs \pi$ should have equivalently small probabilities under the law of the output of the algorithm $\bs \mu_{Nh}$. Guarantees in say $\Renyi_2$ would be sufficient for this purpose, or one could strive to bound these tail probabilities directly. Nonetheless, we see our as a crucial first step to attaining such a warm start, and leave the further development of this for future work.

\section*{Acknowledgements}
We would like to thank Jason Altschuler and Valentin de Bortoli for their stimulating discussions, and furthermore we would like to especially thank Sinho Chewi for providing thorough guidance and feedback. MSZ was supported by NSERC through the CGS-D program. 

\newpage
\appendix
\section{Sampling results}
\subsection{Convergence in continuous time}\label{scn:cvg}
For self-containedness, we summarize the convergence of the various samplers in continuous time. These results are well-known in the literature~\cite{chewisamplingbook}. First, all the convergence results require the target measure to satisfy an isoperimetric inequality, given in Definition~\ref{def:lsi}.

\begin{lemma}[Convergence of the Langevin diffusion]\label{lem:overdamped-cvg}
    Suppose $\pi \propto \exp(-V)$ satisfies~\eqref{eq:lsi} with constant $1/\alpha$. Then, for any measure $\mu_0 \in \mc P(\R^d)$, and $\tilde \mu_t$ its evolution along the Langevin diffusion~\eqref{eq:langevin_diff} up to time $t$, with $q \geq 2$,
    \begin{align*}
        \Renyi_q(\tilde \mu_t \mmid \pi) \leq e^{-\frac{2\alpha t}{q}} \Renyi_q(\mu_0 \mmid \pi)\,.
    \end{align*}
\end{lemma}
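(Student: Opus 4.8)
The plan is to run the standard entropy--dissipation argument along the Fokker--Planck flow of~\eqref{eq:langevin_diff}, adapted from $\KL$ to the R\'enyi functional, and to close the resulting differential inequality with a short moment-generating-function convexity estimate. Write $\rho_t$ for the density of $\tilde\mu_t$, set $f_t \deq \rho_t/\pi$ (a probability density with respect to $\pi$), and let $F_q(t) \deq \E_\pi[f_t^q]$, so that $\Renyi_q(\tilde\mu_t \mmid \pi) = \tfrac{1}{q-1}\log F_q(t)$. Since $\nabla\pi = -\pi\,\nabla V$, the Fokker--Planck equation $\partial_t\rho_t = \nabla\cdot(\rho_t\,\nabla V) + \Delta\rho_t$ rewrites as $\partial_t\rho_t = \nabla\cdot(\pi\,\nabla f_t)$, so $\partial_t f_t = \pi^{-1}\,\nabla\cdot(\pi\,\nabla f_t)$.

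First I would differentiate $F_q$ in $t$ and integrate by parts against $\pi$:
\begin{align*}
    F_q'(t) &= q\,\E_\pi\!\bigl[f_t^{q-1}\,\partial_t f_t\bigr] = -q(q-1)\,\E_\pi\!\bigl[f_t^{q-2}\,\norm{\nabla f_t}^2\bigr] \\
    &= -\frac{4(q-1)}{q}\,\E_\pi\!\bigl[\norm{\nabla(f_t^{q/2})}^2\bigr]\,,
\end{align*}
using $\norm{\nabla(f_t^{q/2})}^2 = \tfrac{q^2}{4}\,f_t^{q-2}\norm{\nabla f_t}^2$. Then I would invoke the log-Sobolev inequality (Definition~\ref{def:lsi}), in the equivalent Sobolev form $\ent_\pi(g^2) \le \tfrac{2}{\alpha}\,\E_\pi\norm{\nabla g}^2$, applied to $g = f_t^{q/2}$, which gives $\E_\pi\norm{\nabla(f_t^{q/2})}^2 \ge \tfrac{\alpha}{2}\,\ent_\pi(f_t^q)$ and hence $F_q'(t) \le -\tfrac{2\alpha(q-1)}{q}\,\ent_\pi(f_t^q)$.

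The one nontrivial point is to lower bound $\ent_\pi(f_t^q)$ by the correct multiple of $F_q\log F_q$ --- plain Jensen only yields $\ent_\pi(f_t^q)\ge 0$, which is too weak. For this I would use that $s\mapsto\psi(s)\deq\log\E_\pi[f_t^s]$ is convex on $[1,\infty)$ (H\"older's inequality) and satisfies $\psi(1) = \log\E_\pi[f_t] = 0$; convexity gives $\log F_q = \psi(q)-\psi(1) \le (q-1)\,\psi'(q)$, and since $\psi'(q) = \E_\pi[f_t^q\log f_t]/F_q = \E_\pi[f_t^q\log f_t^q]/(q F_q)$, this rearranges to $\E_\pi[f_t^q\log f_t^q]\ge \tfrac{q}{q-1}\,F_q\log F_q$, whence $\ent_\pi(f_t^q) = \E_\pi[f_t^q\log f_t^q] - F_q\log F_q \ge \tfrac{1}{q-1}\,F_q\log F_q$. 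Plugging this back in,
\begin{align*}
    F_q'(t) \le -\frac{2\alpha}{q}\,F_q(t)\log F_q(t)\,,
\end{align*}
so $G(t)\deq\log F_q(t) = (q-1)\,\Renyi_q(\tilde\mu_t\mmid\pi)\ge 0$ satisfies $G'(t)\le -\tfrac{2\alpha}{q}\,G(t)$, and \Gronwall's inequality gives $G(t)\le e^{-2\alpha t/q}G(0)$, which is the claim after dividing by $q-1$. The limiting case $q\downarrow 1$ (the $\KL$ rate $e^{-2\alpha t}$) follows either from $\Renyi_q \downarrow \KL$ with monotone convergence, or by running the same computation directly with $\ent_\pi f_t$.

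I expect the main obstacle to be not conceptual but a matter of rigor: one needs $\rho_t$ to be smooth and strictly positive for $t>0$, with sufficient integrability and decay to justify differentiating under the integral, the integration by parts with no boundary term, and the use of the log-Sobolev inequality (whose test functions must be approximated by smooth compactly supported ones); and one may assume $\Renyi_q(\mu_0\mmid\pi)<\infty$, since otherwise there is nothing to prove. I would handle these in the customary way --- establish the estimate first for regular initial data using the instantaneous smoothing of the Langevin semigroup, then pass to general $\mu_0$ by approximation together with lower semicontinuity of $\Renyi_q$ --- relegating the details to a remark and citing~\cite{chewisamplingbook}.
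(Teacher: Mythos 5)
Your proof is correct, and it is the standard entropy-dissipation argument for R\'enyi convergence of the Langevin flow under a log-Sobolev inequality. The paper does not actually prove this lemma; it is stated in Appendix~\ref{scn:cvg} with a pointer to~\cite{chewisamplingbook}, so there is no in-text argument to compare against. Every step you give --- the rewrite of the Fokker--Planck equation as $\partial_t f_t = \pi^{-1}\nabla\cdot(\pi\nabla f_t)$, the integration by parts yielding $F_q'(t) = -\frac{4(q-1)}{q}\E_\pi\|\nabla(f_t^{q/2})\|^2$, the application of LSI, the convexity of $s \mapsto \log\E_\pi[f_t^s]$ to produce $\ent_\pi(f_t^q) \ge \frac{1}{q-1}F_q\log F_q$ (this is precisely the step that is usually the sticking point, and you handle it correctly), and the Gr\"onwall closure --- is the argument one finds in the reference, and your closing remarks about regularizing via the semigroup and lower semicontinuity are the right caveats.

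One thing worth flagging: you invoke the LSI in the form $\ent_\pi(g^2) \le \frac{2}{\alpha}\E_\pi\|\nabla g\|^2$, whereas Definition~\ref{def:lsi} as literally written states $\ent_\pi f \le \frac{1}{2\alpha}\E_\pi[\|\nabla f\|^2]$ with no normalization by $f$ in the Dirichlet form. These are not equivalent, and the literal version would not close the differential inequality here (plugging in $f = f_t^q$ produces $\E_\pi[f_t^{2q-2}\|\nabla f_t\|^2]$, which cannot be matched to the $\E_\pi[f_t^{q-2}\|\nabla f_t\|^2]$ arising from the dissipation). The paper's definition appears to be a typo for the Fisher-information form $\ent_\pi f \le \frac{1}{2\alpha}\E_\pi[\|\nabla f\|^2/f]$, which is equivalent to the form you use and matches the $1/(2\alpha)$ normalization; you have silently corrected it, which is the right call.
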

Another result gives exponential convergence in R\'enyi under convexity conditions.
\begin{lemma}[{Harnack inequality for the underdamped Langevin dynamics, adapted from~\cite[Theorem 3.2]{scr4}}]\label{lem:uld_regularity}
    Let $\alpha I_d \preceq \nabla^2 V \preceq \beta I_d$, $\alpha \geq 0$, and let $\bs \mu_T, \bar{\bs \mu}_T$ denote the measures corresponding to running~\eqref{eq:ULD} for time $T > 0$ from starting points $(x,p)$ and $(\bar x, \bar p)$ respectively.
    Then, for all $q\ge 1$ and all $x,\bar x,p,\bar p\in\R^d$, $\gamma \gtrsim \sqrt{\beta}$,
    \begin{align*}
        \Renyi_q(\bs \mu_T \mmid \bar{\bs \mu}_T)
        &\le 
            q\Bigl\{\frac{1}{\gamma}\Bigl(\frac{\omega}{\exp(c\omega T) - 1} \Bigr)^3 +\frac{\gamma \omega}{\exp(c\omega T) - 1}\Bigr\}\,\Bigl\{\norm{x-\bar x}^2 + \frac{1}{\gamma_0^2}\,\norm{p-\bar p}^2\Bigr\}\,,
    \end{align*}
    where $\gamma_0, \omega$ are defined in Lemma~\ref{lem:local_error}. Here, $c > 0$ is a universal constant ($c=1/48$ suffices).
\end{lemma}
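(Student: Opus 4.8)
The plan is to prove Lemma~\ref{lem:uld_regularity} by a coupling-by-change-of-measure argument --- the standard route to log-Harnack and R\'enyi--Harnack inequalities for hypoelliptic diffusions, and the estimate underpinning the shifted-composition bounds of~\cite{scr3, scr4}. I would fix a Brownian motion $(B_t)_{t \in [0, T)}$, run $(X_t, P_t)$ from $(x, p)$ according to~\eqref{eq:ULD}, and introduce a \emph{controlled} companion $(Y_t, Q_t)$ from $(\bar x, \bar p)$ driven by the same $B$ but with an adapted control inserted into the momentum equation: $\D Y_t = Q_t \, \D t$ and $\D Q_t = (-\gamma Q_t - \nabla V(Y_t) + \sqrt{2\gamma}\,\bar \Mu_t)\, \D t + \sqrt{2\gamma}\, \D B_t$. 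Since the error dynamics form a controllable linear system (the control enters the momentum, which in turn drives the position), one can choose $(\bar \Mu_t)$ so that $(Y_T, Q_T) = (X_T, P_T)$ almost surely. By the non-anticipating Girsanov theorem (Theorem~\ref{thm:regular-girsanov}) --- no anticipating machinery is needed, as $\bar \Mu$ is adapted --- under the measure with density $\exp(-\int_0^T \inner{\bar \Mu_t, \D B_t} - \tfrac12 \int_0^T \norm{\bar \Mu_t}^2 \, \D t)$ the companion is a genuine~\eqref{eq:ULD} trajectory from $(\bar x, \bar p)$; since it coincides with $(X_T, P_T)$ at the terminal time, $\bs \mu_T$ and $\bar{\bs \mu}_T$ are mutually absolutely continuous, with Radon--Nikodym density given explicitly by the Girsanov factor (inverting as needed, cf.\ Lemma~\ref{lem:renyi-md}), once Novikov's condition~\eqref{eq:novikov} is verified.

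The second and main step is the sharp control-energy estimate. The error $e_t = (X_t - Y_t,\ P_t - Q_t)$ solves the noise-free system $\dot e^x_t = e^p_t$, $\dot e^p_t = -\gamma e^p_t - H_t e^x_t - \sqrt{2\gamma}\, \bar \Mu_t$ with $H_t = \int_0^1 \nabla^2 V(Y_t + s e^x_t)\, \D s$ and $\alpha I_d \preceq H_t \preceq \beta I_d$. I would choose $\bar \Mu_t$ so that $e_t$ follows a prescribed trajectory interpolating $e_0 = (x - \bar x,\ p - \bar p)$ down to $e_T = 0$ --- namely, decaying along the free underdamped flow, which contracts at rate $\asymp \omega = \alpha/(3\gamma)$ in the twisted squared norm $\norm{e^x}^2 + (\text{cross term}) + \gamma_0^{-2}\norm{e^p}^2$ (the standard underdamped Lyapunov estimate, valid for $\gamma \gtrsim \sqrt\beta$) --- reparametrized in time so the trajectory is forced to reach $0$ exactly at $T$. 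Reading $\bar \Mu_t$ off the ODE and integrating, while tracking the position and momentum contributions separately (the position error can only be corrected through the momentum channel, which costs the factor $\omega/(\exp(c\omega T) - 1)$ twice over), should yield the deterministic bound
\[
    \norm{\bar \Mu}^2_{L^2([0, T); \R^d)} = \int_0^T \norm{\bar \Mu_t}^2 \, \D t \lesssim \Bigl\{\tfrac{1}{\gamma}\Bigl(\tfrac{\omega}{\exp(c\omega T) - 1}\Bigr)^3 + \tfrac{\gamma\omega}{\exp(c\omega T) - 1}\Bigr\}\Bigl\{\norm{x - \bar x}^2 + \tfrac{1}{\gamma_0^2}\norm{p - \bar p}^2\Bigr\} =: C\,.
\]
The case $\alpha = 0$ is the limit $\omega \searrow 0$, where $\omega/(\exp(c\omega T) - 1) \to 1/(cT)$ and the $\one_{T \le 1/\omega}/T$ term in the definition of $\gamma_0$ (Lemma~\ref{lem:local_error}) is exactly what keeps the bound finite.

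Finally I would convert this to a R\'enyi bound exactly as in the proofs of Lemma~\ref{lem:renyi-md} and Lemma~\ref{lem:DMD-renyi-generic}. Since $\norm{\bar \Mu}^2_{L^2([0, T); \R^d)} \le C$ almost surely, Novikov's condition holds and the Girsanov density is a true martingale; writing $(\D \bs \mu_T / \D \bar{\bs \mu}_T)^{q-1}$ as a Dol\'eans--Dade exponential (unit expectation, by Novikov) times $\exp(O(q^2)\norm{\bar \Mu}^2_{L^2})$, taking expectations, and dividing by $q - 1$ gives $\Renyi_q(\bs \mu_T \mmid \bar{\bs \mu}_T) \lesssim q\, C$, which is the asserted inequality (the value $c = 1/48$ emerging from the explicit Lyapunov constants). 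The principal obstacle is the second step: selecting the interpolating trajectory and its control correctly and executing the twisted-norm Lyapunov analysis with sharp constants and the right position-versus-momentum scaling; everything else is routine bookkeeping or a direct appeal to~\cite[Theorem~3.2]{scr4}.
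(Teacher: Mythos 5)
The paper does not prove Lemma~\ref{lem:uld_regularity}: it is stated in Appendix~A as ``adapted from~[scr4, Theorem~3.2]'' and simply cited. So there is no in-paper proof against which to compare; what you have done is reconstruct (a sketch of) the argument that underlies the cited result.

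Your outline is the correct one, and it is in fact the standard route in the literature on dimension-free Harnack inequalities for kinetic/underdamped diffusions (Wang-type coupling by change of measure), which is what [scr4] uses. The key ideas are all present and correctly ordered: synchronous coupling with an adapted momentum-level control that forces terminal coincidence, non-anticipating Girsanov (Theorem~\ref{thm:regular-girsanov} -- you are right that no anticipative machinery is needed here, since the control is adapted), data processing to reduce $\Renyi_q(\bs\mu_T \mmid \bar{\bs\mu}_T)$ to the path-space Rényi divergence, and the Doléans--Dade/Novikov argument to convert the a.s. $L^2$ bound on the control into a Rényi bound linear in $q$. One subtle point you got right and that is worth flagging explicitly: for the a.s. bound $\norm{\bar\Mu}^2_{L^2} \le C$ to be deterministic (so Novikov holds trivially and the exponential term in the Rényi computation is bounded by a constant), you must \emph{prescribe} the error trajectory $e_t$ deterministically and read the control off the error ODE; then $\bar\Mu_t$ is adapted because $H_t$ is, and $\norm{\bar\Mu_t}$ is bounded deterministically because $\alpha I \preceq H_t \preceq \beta I$. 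As you say, the genuinely nontrivial work is Step~2 -- choosing the decay profile and executing the twisted Lyapunov estimate (note the norm in Lemma~\ref{lem:local_error} has no cross term, unlike the Lyapunov function you would actually use, so some translation is needed) to land exactly on the $\frac{1}{\gamma}(\frac{\omega}{e^{c\omega T}-1})^3 + \frac{\gamma\omega}{e^{c\omega T}-1}$ prefactor with $c = 1/48$; that computation is carried out in [scr4] and you correctly defer to it. In short: the paper cites, you (correctly) prove; the two are consistent.
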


\subsection{Error analysis of M--LMC}

\begin{proof-of-lemma}[Corollary~\ref{cor:ormd-error}]
    First, let us argue conditional on $\ms F_{k {h}}$ (the filtration at iteration $k$). Using crude bounds, we have under $\mathbf P$,
    \begin{align*}
        \int_{(k-1)h}^{kh} \norm{X_t - X_k^+}^2 \, \D t &\lesssim \int_{(k-1)h}^{kh} \Bigl\{(t-(k-1)h)^2 \norm{\nabla V(X_k^+) - \nabla V(X_{(k-1)h})}^2 \\
        &\qquad\qquad + (t-(k-1)h - \tau_k)^2_+ \norm{\nabla V(X_k^+)}^2 \\
        &\qquad\qquad + ((k-1)h + \tau_k - t)^2_+ \norm{\nabla V(X_{(k-1)h})}^2 + \norm{B_{(k-1)h+\tau_k} - B_t}^2 \Bigr\} \, \D t\\
        &\lesssim h \sup_{t\in [0,h)} \norm{B_{(k-1)h+t} - B_{(k-1)h}}^2 + h^3 \max \{\norm{\nabla V(X_k^+)}^2, \norm{\nabla V(X_{(k-1)h})}^2\}\,.
    \end{align*}
    We can write
    \begin{align*}
        \norm{\nabla V(X_k^+)}^2 &\leq \norm{\nabla V(X_{(k-1)h})}^2 + \beta^2\norm{X_{(k-1)h} - X_k^+}^2 \\
        &\lesssim \norm{\nabla V(X_{(k-1)h})}^2 + \beta^2\Bigl(h^2\E[\norm{\nabla V(X_{(k-1)h})}^2] + \sup_{t\in [0,h)} \norm{B_{(k-1)h+t} - B_{(k-1)h}}^2\Bigr)\,.
    \end{align*}
    Then, as long as $h \lesssim \frac{1}{\beta}$, we can bound everything by
    \begin{align*}
         \int_{(k-1){h}}^{k{h}} \norm{X_t - X_k^+}^2 \, \D t \leq{h} \sup_{t\in [0,{h})} \norm{B_{(k-1){h}+t} - B_{(k-1){h}}}^2 + {h}^3 \norm{\nabla V(X_{(k-1){h}})}^2\,.
    \end{align*}
    Now, via Cauchy--Schwarz, if we consider the paths up to iteration $k$, we only need to bound for an absolute constant $C$
    \begin{align*}
        \frac{1}{q}\log \E_{\bf P} &\exp\Bigl(N\int_{(k-1){h}}^{k{h}} \beta^2 q \norm{X_t - X_k^+}^2 \, \D t \Bigr) \\
        &\lesssim \frac{1}{q}\log \E_{\bf P} \exp\Bigl(C\beta^2 hqN \sup_{t\in [0,{h})} \norm{B_{(k-1){h}+t} - B_{(k-1){h}}}^2 \Bigr) \\
        &\qquad + \frac{1}{q}\log \E_{\bf P} \Bigl[\exp\Bigl(C\beta^2{h}^3 qN\norm{\nabla V(X_{(k-1){h}}}^2 \Bigr)\Bigr]  \\
        &\lesssim \beta^2 d{h}^2qN + \frac{1}{q}\sup_{j \leq k} \log\E_{\bf P}\Bigl[\exp\Bigl(C\beta^2 {h}^3q^2N \norm{\nabla V(X_{(j-1){h}})}^2\Bigr)\Bigr]  \,,
    \end{align*}
    using the standard bounds on the supremum of a Gaussian process.
    
    We constrain $k{h} \leq N{h} \eqqcolon T$. Now, we have the bound, using Lemmas~\ref{lem:subgsn-score} and~\ref{lem:renyi-change-measure},
    \begin{align*}
        \frac{1}{q}\sup_{k \in [N]} \log\E_{\bf P}\Bigl[\exp\Bigl(C\beta^2 {h}^3 q^2 N\norm{\nabla V(X_{(j-1){h}})}^2\Bigr)\Bigr] \lesssim \beta^3 d{h}^3 q N + \beta^3 {h}^3 q N\sup_{k \in [N]} \Renyi_2(\mu_{(k-1){h}}\mmid \pi)\,.  
    \end{align*}
    It will be helpful for the recursion to assume that $q \geq 4$ is sufficiently large. As increasing $q$ from $2$ to $3$ affects our bounds by only a constant factor, this will not hinder our proof.
    This then uses the R\'enyi triangle inequality (Lemma~\ref{lem:renyi-triangle}) as well as Lemma~\ref{lem:overdamped-cvg}, to assert that
    \begin{align*}
        \Renyi_2(\mu_{N{h}}\mmid \pi) \lesssim \Renyi_{4}(\mu_{N{h}} \mmid \tilde \mu_{N{h}}) + \Renyi_{3}( \tilde \mu_{N{h}} \mmid \pi)\,.
    \end{align*}
    Thus, as $\pi$ is stationary under~\eqref{eq:langevin_diff}, via the data-processing inequality we find
    \begin{align*}
        \sup_{k \in [N]} \Renyi_2(\mu_{(k-1){h}}\mmid \pi) \lesssim \Renyi_{3}(\mu_0 \mmid \pi) +\beta^2 d{h}^2 N + \sup_{k \in [N]} \log\E_{\bf P}\Bigl[\exp\Bigl(16C \beta^2 {h}^3 N \norm{\nabla V(X_{(j-1){h}})}^2\Bigr)\Bigr]\,. 
    \end{align*}
    Defining $T \deq N{h}$, for ${h} \lesssim \frac{1}{\beta^{3/2} \sqrt{qT}}$ and $q \geq 4$, we can absorb the final term into the left side of our earlier expression for the subgaussian expectation of $\nabla V$. This leaves 
    \begin{align*}
        \frac{1}{q}\sup_{k \in [N]} \log\E_{\bf P}\Bigl[\exp\Bigl(C\beta^2 {h}^3 q^2 N \norm{\nabla V(X_{(j-1){h}})}^2\Bigr)\Bigr] \lesssim \beta^3 {h}^3 q N \Bigl(d + \Renyi_3(\mu_0 \mmid \pi)\Bigr)\,.
    \end{align*}
    Plugging this into our earlier bound concludes the proof of the second equality.
    
    The final complexity is obtained by combining $T \asymp \frac{q}{\alpha} \log \frac{\Renyi_q(\mu_0 \mmid \pi)}{\varepsilon^2}$ (with $\Renyi_q(\mu_0 \mmid \pi) \lesssim d \log \kappa$ by assumption) through Lemma~\ref{lem:overdamped-cvg}, and then choosing ${h}$ so that the process error $\Renyi_{2q}(\mu_{N{h}} \mmid \tilde \mu_{N{h}}) \lesssim \varepsilon^2$. 
    The total number of gradient queries is then obtained by taking $N = T/{h}$.
\end{proof-of-lemma}

\subsection{Error analysis of DM--ULMC}
\begin{proof-of-lemma}[{Lemma~\ref{lem:DMD-one-step-uld}}]

\paragraph{Part (A): Discretization.}
    Due to the implicit nature of our drift, we will need to again argue by approximation. First, work with a single step, taking $\lambda_1, \lambda_2 = \lambda_{0, 1}, \lambda_{0, 2}$, and $\tau^+, \tau^- = \tau_1^+, \tau_1^-$, etc.
    Impose the standard discretization onto intervals $[(k-1)\eta, k\eta)$ for $k = 1, \dotsc, h/\eta$. 
    The validity of these approximations is discussed in Lemma~\ref{lem:anticipating-SDE-existence}. Define again,
    \begin{align}\label{eq:DM--ULMC-approx}
        \hat X_{k\eta} = X_0 - \eib(0, k\eta) P_0 - \sum_{j = 0}^{k-1} \eta \eib(j\eta, k\eta) \hat\MG_{j\eta}^\opt + \sqrt{2\gamma} \sum_{j = 0}^{k-1} \eib(j\eta, k\eta) (B_{(j+1)\eta} - B_{j\eta})\,, 
    \end{align}
    and, define $\hat \MG_t^\opt$ similarly to $\MG_t^\opt$, but with parameters $\hat \lambda_1, \hat \lambda_2$ given by
    \begin{align*}
        \hat \lambda_1 &= \frac{\sigma_{22}\Bigl[\sum_{j = 0}^{ h/\eta-1} \eta \eia(j\eta, h) \nabla V(\hat X_{j\eta}) -\hat \MG^\msp \Bigr] - \sigma_{21} \Bigl[\sum_{j = 0}^{ h/\eta-1} \eib(j\eta, h) \nabla V(\hat X_{j\eta}) - \hat \MG^\msx \Bigr]}{\Delta_\sigma}\,,
    \end{align*}
    and similarly for $\hat \lambda_2$, where we assume that $\eta$ evenly divides $\tau_1^+, \tau_1^-$. We define
    \begin{align*}
        \hat \MG^\msp \deq \eib(0, h) \nabla V(\hat X^+)\,,\qquad \hat \MG^\msx \deq \eic(0, h) \nabla V(\hat X^-)\,,
    \end{align*}
    only with $\hat X^+$ defined as
    \begin{align*}
        \hat X^+ &= X_0 + \eib(0, \tau_1^+) P_0 -  \eic(0, \tau_1^+) \nabla V(X_0) + \sqrt{2\gamma} \sum_{k=1}^{\tau_1^+/\eta} \eib((k-1)\eta, \tau_1^+) (B_{k\eta} - B_{(k-1)\eta})\,.
    \end{align*}
    $\hat X^-$ is similarly defined but with $\tau_k^-$ in place of $\tau_k^+$. This allows us to write for $i \in \{1, \dotsc, h/\eta\}$,
    \begin{align}\label{eq:malliavin-ulmc}
    \begin{aligned}
        \MD_i \hat X_{j\eta} &= \sqrt{2\gamma} \eta \eib(i\eta, j\eta) \one_{i < j} I_d - \eta \sum_{j' =0}^{j-1} \eib(j'\eta, j\eta) \MD_i \hat \MG_{j'\eta}^\opt\,,
    \end{aligned}
    \end{align}
    and
    \begin{align*}
        \MD_i \nabla V(\hat X_{j\eta}) &= \nabla^2 V(\hat X_{j\eta}) \MD_i \hat X_{j\eta} = \nabla^2 V(\hat X_{j\eta}) \cdot \Bigl[\sqrt{2\gamma} \eta \eib(i\eta, j\eta) \one_{i < j} - \eta \sum_{j' =0}^{j-1} \eib(j'\eta, j\eta) \MD_i \hat \MG_{j'\eta}^\opt \Bigr]\,.
    \end{align*}
    Furthermore,
    \begin{align*}
        \MD_i \hat \lambda_1 \!&=\! \frac{\sigma_{22}}{\Delta_\sigma} \Bigl[\eta \!\!\sum_{j = 0}^{ h/\eta-1} \eia(j\eta, h)\nabla^2 V(\hat X_{j\eta}) \MD_i \hat X_{j\eta} \!-\! \MD_i \hat \MG^\msp \Bigr]\! -\! \frac{\sigma_{21}}{\Delta_\sigma} \Bigl[\eta \!\!\sum_{j = 0}^{ h/\eta-1} \eib(j\eta, h)\nabla^2 V(\hat X_{j\eta}) \MD_i \hat X_{j\eta} \!-\! \MD_i \hat \MG^\msx \Bigr]\,.
    \end{align*}
    $\MD_i \hat \lambda_2$ is similarly defined, with $-\sigma_{12}$ in place of $\sigma_{22}$ and $\sigma_{11}$ in place of $\sigma_{21}$. Finally,
    \begin{align*}
        \MD_i \hat \MG^\msx = \sqrt{2\gamma} \eic(0, h) \nabla^2 V(\hat X^-) \eib(i\eta, \tau_1^-) \cdot \eta \one_{i < \tau_1^-/\eta}\,, \\
        \MD_i \hat \MG^\msp = \sqrt{2\gamma}\eib(0, h) \nabla^2 V(\hat X^+)  \eib(i\eta, \tau_1^+) \cdot  \eta \one_{i < \tau_1^+/\eta}\,.
    \end{align*}

    \paragraph{Part (B): Computing the discretized quantities.}
    Given the above, the proof proceeds as follows. We first show that the most troublesome term ($\MD \hat \MG^\opt$) in the expansion can be ignored. We can observe that $\norm{\MD \hat \MG^\msx} + h\norm{\MD \hat \MG^\msp}   \lesssim \sqrt{\gamma} \beta h^3$.
    \begin{enumerate}[label=(\roman*)]
        \item Noting that $\frac{\sigma_{22}}{\Delta_\sigma} \asymp \frac{1}{h}$, $\frac{\sigma_{21}}{\Delta_\sigma} \asymp \frac{1}{h^2}$, $\frac{\sigma_{11}}{\Delta_\sigma} \asymp \frac{1}{h^3}$, we can observe from the above that
        \begin{align*}
            \norm{\MD \hat \lambda_1} \lesssim \beta \norm{\MD \hat X} + \norm{\MD \hat \MG^\msp} + \frac{1}{h} \norm{\MD \hat \MG^\msx} \lesssim \beta \norm{\MD \hat X}+ \sqrt{\gamma} \beta h^2\,.
        \end{align*}
        Likewise, $h\norm{\MD \hat \lambda_2} \lesssim \beta \norm{\MD \hat X} + \sqrt{\gamma} \beta h^2$.

        \item On the other hand, we can read from~\eqref{eq:malliavin-ulmc} that
        \begin{align*}
            \MD \hat X = A^{(1)} - A^{(2)} \MD \hat \MG^\opt\,, 
        \end{align*}
        where $\norm{A^{(1)}} \lesssim \sqrt{\gamma} h^2$, $\norm{A^{(2)}} \lesssim h^2$, and so
        \begin{align*}
            \MD \nabla V(\hat X) = A^{(3)} - A^{(4)} \MD \hat \MG^\opt
        \end{align*}
        with $\norm{A^{(3)}} \lesssim \sqrt{\gamma} \beta h^2$, $\norm{A^{(4)}} \lesssim \beta h^2$.

        \item Putting this all together, we use that
        \begin{align*}
            \norm{\MD \hat \MG} \lesssim  \norm{\MD \nabla V(\hat X)} +  \norm{\MD \hat \lambda_1} + h\norm{\MD \hat \lambda_2}\,.
        \end{align*}
        First, we can stitch all these terms into an implicit equation, which has a solution for $h \lesssim \frac{1}{\sqrt{\beta}}$ sufficiently small. With a sufficiently small implied constant above, we can conclude that $\norm{\MD \hat \MG^\opt} \lesssim \sqrt{\gamma} \beta h^2$.

        \item Note that in the equations for $\norm{\MD \hat \lambda_1}$, the $\norm{\MD \hat \MG^\opt}$ term already appears with a prefactor of $\beta h^2$ from $\beta A^{(2)}$. Thus, it will be negligible when compared to the leading terms arising from $\beta A^{(1)}$ and $\norm{\MD \hat \MG^\msp}$, if again $h \lesssim \frac{1}{\sqrt{\beta}}$, and we can always ignore it up to a cost of $\sqrt{\gamma} \beta^2 h^4$.
    \end{enumerate}
    
    In discrete terms, defining the temporal vectors $\eia, \eib \in \R^{h/\eta}$ with entries
    \begin{align*}
        (\eia)_j = \eia((j-1)\eta, h)\,, \qquad (\eib)_j = \eib((j-1)\eta, h)\,,
    \end{align*}
    then we can express
    \begin{align*}
        \MD \Mu^\eta = -\frac{1}{\sqrt{2\gamma}}\Bigl( \MD \hat \lambda_1 \eia^\top + \MD \hat \lambda_2 \eib^\top \Bigr)
    \end{align*}
    where kernels corresponding to $\MD \hat \lambda_1, \MD \hat \lambda_2$ are in $3$-tensors inhabiting $\R^{d\times d \times h/\eta}$.
    Using our expressions for $\MD \hat \lambda_i$ and ignoring $\MD \hat \MG^\opt$, which we have argued is not the leading order term, we can now obtain both the Malliavin derivative matrix and the Skorohod adjoint. 

    \paragraph{Leading asymptotics:}
    If we are only interested in the $\KL$ or $\Renyi_q$ divergences, then we do not need to be concerned with the Skorohod adjoint, as it will always be removed in the R\'enyi computations using Cauchy--Schwarz via Corollary~\ref{cor:doleans-dade}. 
    
    We compute the determinant and trace of the derivative matrix. Recalling our familiar formula for the Carleman--Fredholm determinant,
    \begin{align*}
        \log \Mdet(I + q\MD \bar \Mu) &= \lim_{\eta \searrow 0} \log \det \Bigl(I -\frac{q}{\sqrt{2\gamma}} (\MD \hat \lambda_1^\eta (\eia^\eta)^\top + \MD \hat \lambda_2^\eta (\eib^\eta)^\top)\Bigr) \\
        &\qquad\qquad\qquad +\frac{q}{\sqrt{2\gamma}}\tr (\MD \hat \lambda_1^\eta (\eia^\eta)^\top + \MD \hat \lambda_2^\eta (\eib^\eta)^\top)\,.
    \end{align*}
    Omit for the moment the superscript in $\eta$.
    As $\norm{\nabla^2 V}\lesssim \beta$, we see that $\frac{1}{\sqrt{\gamma}} (\MD \hat \lambda_1 \eia^\top + \MD \hat \lambda_2 \eib^\top)$ has eigenvalues on the order of $O(\beta h^2)$. More precisely, we had inferred that
    \begin{align*}
        \norm{\MD \hat \lambda_1 \eia^\top} \lesssim \beta \norm{\MD \hat X} + \sqrt{\gamma} \beta h^2 \lesssim \sqrt{\gamma} \beta h^2 + O(\sqrt{\gamma} \beta^2 h^4)\,,
    \end{align*}
    and a similar bound holds for $\norm{\MD \hat \lambda_2 \eib^\top}$.
    
    Expanding the log-determinant of the matrix in terms of trace of its powers, it suffices to compute
    \begin{align*}
        \log \Mdet(I + q\MD \Mu^\eta) = -\frac{q^2}{4\gamma} \tr\Bigl((\MD \hat \lambda_1 \eia^\top + \MD \hat \lambda_2 \eib^\top)^2 \Bigr) + O\Bigl(\beta^3 d h^6 q^3\Bigr)\,.
    \end{align*}
    By cyclicity of the trace, we note that we can instead write
    \begin{align*}
        \tr\Bigl((\MD \hat \lambda_1 \eia^\top + \MD \hat \lambda_2 \eib^\top)^2\Bigr) = \tr\Bigl((\eia^\top \MD \hat \lambda_1 + \eib^\top \MD \hat \lambda_2)^2\Bigr)\lesssim \beta^2 dh^4 q^2\,,
    \end{align*} 
    by summing over the $d$ possible eigenvalues. 

    \paragraph{Part (C): Computing the limiting quantities.}
    It remains to take the limit as $\eta \searrow 0$ to conclude. This is fairly trivial as the bounds above are $\eta$-free. 
    Finally, the multistep bound follows from summing the determinants in each iteration, noting that the total Malliavin derivative matrix will have the same lower triangular form as in Lemma~\ref{lem:MD-multistep}. We also provide some more detailed computations below.

    \paragraph{Detailed computations:}
    Now, we provide some detailed descriptions for the leading terms of the Skorohod adjoint and Malliavin derivative matrices, in case they may be needed. First, for the Skorohod adjoint, we have
    \begin{align*}
        \Mdel \bar \Mu &=  -\frac{1}{\sqrt{\gamma}} \int_0^h \inner{\lambda_1 \eia(t, h) + \lambda_2 \eib(t, h), \D B_t} + \lim_{\eta \to 0} \tr \MD \Mu^\eta\,.
    \end{align*}
    This trace is
    \begin{align*}
        \tr \MD \Mu^\eta = -\tr\Bigl(\mathrm{I} + \mathrm{II} + \mathrm{III} + \mathrm{IV}
        \Bigr) + O(\beta^2 dh^4)\,,
    \end{align*}
    where we separate into terms as the computation is somewhat unwieldy, with
    \begin{align*}
        \mathrm{I} &\deq \sum_{i=0}^{h/\eta - 1} \frac{\sigma_{22} \eia(i\eta, h)}{\Delta_\sigma} \biggl[  \eta^2 \sum_{j=0}^{h/\eta - 1} \eia(j\eta, h) \nabla^2 V(\hat X_{j\eta})\eib(i\eta, j\eta) \one_{i < j} \\
        &\qquad\qquad\qquad\qquad\qquad\qquad\qquad - \eta\eib(0,h) \nabla^2 V(X^+) \eib(i\eta, \tau_1^+) \one_{i < \tau_1^+/\eta} \biggr] \\
        &\to  \frac{\sigma_{22}}{\Delta_\sigma}\biggl[\int_0^h \int_0^t \eia(s, h)\eia(t,h)\eib(s,t) \nabla^2 V(X_t)  \, \D s \, \D t \\
        &\qquad\qquad\qquad\qquad\qquad\qquad\qquad- \int_0^{\tau_1^+} \eia(s, h) \eib(0, h)  \eib(s, \tau_1^+) \nabla^2 V(X^+) \, \D s \, \biggr]\,.
    \end{align*}
    For the remaining terms, we only give their limiting forms, with the discrete versions obtained in a very similar way:
    \begin{align*}
        \mathrm{II}
        &\to  -\frac{\sigma_{21}}{\Delta_\sigma}\biggl[\int_0^h \int_0^t {\eia(s, h)\eib(t,h)\eib(s,t)}\nabla^2 V(X_t)  \, \D s \, \D t  \\
        &\qquad \qquad \qquad - \int_0^{\tau_1^-} {\eia(s, h) \eic(0, h)  \eib(s, \tau_1^-)} \nabla^2 V(X^-) \, \D s \, \biggr]\,.\\
        \mathrm{III}
        &\to  -\frac{\sigma_{12}}{\Delta_\sigma}\biggl[\int_0^h \int_0^t {\eib(s, h)\eia(t,h)\eib(s,t)} \nabla^2 V(X_t)  \, \D s \, \D t  \\
        &\qquad \qquad \qquad - \int_0^{\tau_1^+} {\eib(s, h) \eib(0, h)  \eib(s, \tau_1^+)} \nabla^2 V(X^+) \, \D s \, \biggr]\,.\\
        \mathrm{IV}
        &\to \frac{\sigma_{11}}{\Delta_\sigma}\biggl[\int_0^h \int_0^t {\eib(s, h)\eib(t,h)\eib(s,t)} \nabla^2 V(X_t)  \, \D s \, \D t  \\
        &\qquad \qquad \qquad - \int_0^{\tau_1^-} {\eib(s, h) \eic(0, h)  \eib(s, \tau_1^-)} \nabla^2 V(X^-) \, \D s \, \biggr]\,.
    \end{align*}
    Thus, we can compute now
    \begin{align*}
        \frac{1}{\sqrt{\gamma}}\eia^\top \MD \hat \lambda_1 = \mathrm{I} + \mathrm{II}\,, \qquad \frac{1}{\sqrt{\gamma}}\eia^\top \MD \hat \lambda_2 = \mathrm{III} + \mathrm{IV}\,,
    \end{align*}
    where the terms $\mathrm{I}, \dotsc, \mathrm{IV}$ are precisely as defined earlier. This can be used to obtain a more refined expression for the $\log \Mdet (I + q\MD \Mu)$.
\end{proof-of-lemma}

\begin{proof-of-lemma}[Lemma~\ref{lem:renyi-dmd-coarse}]
For simplicity, it suffices to take $k = 0$ and work with a single step. For this, we need only bound, with $(\lambda_1, \lambda_2) \deq (\lambda_{0,1}, \lambda_{0,2})$, and $\Lambda^2 \deq \norm{\lambda_1}^2 + {h}^2 \norm{\lambda_2}^2$
\begin{align*}
    \int_0^{h} \norm{\eia(t, {h}) \lambda_1 + \eib(t, {h}) \lambda_2}^2 \, \D t 
    \lesssim h \Lambda^2 &\lesssim \Bigl(\frac{{h} \sigma_{22}^2}{\Delta_\sigma^2} + \frac{{h}^3 \sigma_{12}^2}{\Delta_\sigma^2} \Bigr) \norm{\int_0^{h} \eia(t, {h}) \nabla V(X_t)\, \D t - \MG^\msp_0}^2  \\
    &\qquad + \Bigl(\frac{{h} \sigma_{21}^2}{\Delta_\sigma^2} + \frac{{h}^3 \sigma_{11}^2}{\Delta_\sigma^2} \Bigr)\norm{\int_0^{h} \eib(t, {h}) \nabla V(X_t)\, \D t - \MG^\msx_0}^2\,,
\end{align*}
noting that these inequalities hold almost surely. For ${h} \lesssim 1$, it follows that $\sigma_{22} \lesssim {h}^3$, $\sigma_{12} = \sigma_{21} \lesssim {h}^2$, $\sigma_{11} \lesssim {h}$ and $\Delta_\sigma \gtrsim {h}^4$, so that
\begin{align*}
    \int_0^{h} \norm{\eia(t, {h}) \lambda_1 + \eib(t, {h}) \lambda_2}^2 \, \D t &\lesssim \frac{1}{{h}} \norm{\int_0^{h} \eia(t, {h}) \nabla V(X_t)\, \D t -\MG^\msp_0}^2 \\
    &\qquad +\frac{1}{{h}^3} \norm{\int_0^{h} \eib(t, {h}) \nabla V(X_t)\, \D t - \MG^\msx_0}^2 \,.
\end{align*}
We call the first term the momentum error and the second term the position error. 

\paragraph{Crude analysis:}
We begin with the weaker bound which does not leverage higher order smoothness. Conditional on our initial point $(X_0, P_0)$,
\begin{align*}
    \norm{\int_0^{h} \eia(t, {h}) \nabla V(X_t)\, \D t - \MG^\msp_0}^2 &\lesssim \beta^2 {h}^2 \Bigl\{\sup_{t \in [0, {h})} \norm{X_t - X_{\tau_1^+}}^2  +\norm{X_{\tau_1^+} - X^+}^2 \Bigr\}\,.
\end{align*}
For the first term, we use $\sup_{t \in [0, h)} \norm{\MG_t^\opt}^2 \lesssim \sup_{t \in [0, h)} \norm{\nabla V(X_t)}^2 + \Lambda^2$,
\begin{align*}
    \beta^2 &{h}^2\sup_{t \in [0, {h})} \norm{X_t - X_{\tau_1^+}}^2\lesssim {\beta^2 {h}^6} \sup_{t \in [0, {h})} \norm{\MG_t^\opt}^2 + {\beta^2 {h}^4} \norm{P_0}^2 + {\beta^2 \gamma  {h}^4} \sup_{t\in[0, {h})} \norm{B_t}^2 \\
    &\lesssim {\beta^2 {h}^6} \sup_{t \in [0, {h})} \norm{\nabla V(X_t)}^2 + {\beta^2 {h}^6 \Lambda^2} +{\beta^2 {h}^4} \norm{P_0}^2 + { \beta^2 \gamma {h}^4}\sup_{t\in[0, {h})} \norm{B_t}^2\,.
\end{align*}
For the gradient term, we have
\begin{align*}
    \norm{\nabla V(X_t)}^2 &\lesssim \norm{\nabla V(X_0)}^2 + \beta^2 \norm{X_t - X_0}^2 \\
    &\lesssim \norm{\nabla V(X_0)}^2 + {\beta^2 {h}^4} \sup_{t \in [0, {h})}\norm{\nabla V(X_t)}^2  +{\beta^2 {h}^4  \Lambda^2}  +{\beta^2 {h}^2} \norm{P_0}^2 + {\beta^2 \gamma {h}^2} \sup_{t\in[0, {h})} \norm{B_t}^2\,.
\end{align*}
Taking supremum over $t \in [0, {h})$, this shows after rearranging, if ${h} \lesssim \frac{1}{\sqrt{\beta}}$, that we have
\begin{align*}
    \beta^2 &{h}^2\sup_{s \in [0, {h})} \norm{X_t - X_{\tau_1^+}}^2 \lesssim {\beta^2 {h}^6} \norm{\nabla V(X_0)}^2 + {\beta^2 {h}^6  \Lambda^2} +{\beta^2 {h}^4} \norm{P_0}^2 + {\beta^2 \gamma {h}^4} \sup_{t\in[0, {h})} \norm{B_t}^2\,.
\end{align*}
For the other term, we have
\begin{align*}
    \beta^2 {h}^2 \norm{X_{\tau_1^+} - X^+}^2
    &\lesssim {\beta^2 {h}^2} \norm{\int_0^{\tau_1^+}\int_0^t \eia(s, t)\, \MG_s^\opt \, \D s \, \D t - \eic(0, \tau_1^+) \nabla V(X_0)}^2 \\
    &\lesssim {\beta^2 {h}^6 \Lambda^2}  + {\beta^2{h}^2}\norm{\int_0^{\tau_1^+} \int_0^t \eia(s,t) \{\nabla V(X_s) - \nabla V(X_0)\} \, \D s \, \D t}^2\,.
\end{align*}
We can then bound
\begin{align*}
    \norm{\int_0^{{h}/2} &\int_0^t \eia(s,t) \{\nabla V(X_s) - \nabla V(X_0)\} \, \D s \, \D t}^2 \\
    &\lesssim \beta^2 {h}^4 \sup_{t \in [0, {h})}  \norm{X_s - X_0}^2 \lesssim {\beta^2 {h}^8} \sup_{t \in [0, {h})} \norm{\MG_t^\opt}^2  + \beta^2 {h}^6 \norm{P_0}^2 + \beta^2 \gamma {h}^6 \sup_{t \in [0, {h})} \norm{B_t}^2  \,.
\end{align*}
Combining this with our earlier bounds on $\norm{\MG_t^\opt}^2$, we have for $h \lesssim \frac{1}{\beta^{1/2}}$,
\begin{align}\label{eq:same-time-interpolant}
\begin{aligned}
    \beta^2 {h}^2 \norm{X_{\tau_1^+} - X^+}^2 \lesssim {\beta^2 {h}^6  \Lambda^2}  + {\beta^4{h}^{10}} \norm{\nabla V(X_0)}^2 + {\beta^4 {h}^8} \norm{P_0}^2 + {\beta^4 \gamma  {h}^8} \sup_{t \in [0, {h})} \norm{B_t}^2\,.
\end{aligned}
\end{align}
We see that the bound on $\norm{X_{\tau_1^+} - X^+}^2$ will yield only higher order terms (in ${h}$) relative to the bounds we obtained for $\norm{X_t - X_{\tau_1^+}}^2$. Putting these two terms together for ${h} \lesssim \frac{1}{\sqrt{\beta}}$ gives
\begin{align*}
    \norm{\int_0^{h} &\eia(t, {h}) \nabla V(X_t)\, \D t - \MG^\msp_0}^2 \lesssim {\beta^2 {h}^4} \Bigl\{{{h}^2 \norm{\nabla V(X_0)}^2} + {{h}^2\Lambda^2} + {\norm{P_0}^2} + \gamma \sup_{t \in [0, {h})} \norm{B_t}^2\Bigr\}\,.
\end{align*}
The position error is handled in almost the exact same way, but gains two orders of ${h}$ due to $\eib(t,{h})$ being present in place of $\eia(t, {h})$,
\begin{align*}
    \norm{\int_0^{h} \eib(t, {h}) \nabla V(X_t)\, \D t - \MG^\msx_0}^2 &\lesssim \beta^2 {h}^4 \Bigl\{\sup_{t \in [0, {h})} \norm{X_t - X_{\tau_1^-}}^2  +\norm{X_{\tau_1^-} - X^-}^2 \Bigr\}\,,
\end{align*}
and both of these terms are handled in the exact same way.
Thus, we can bound
\begin{align*}
    \Lambda^2 \lesssim {\beta^2 {h}^4} \norm{\nabla V(X_0)}^2 + {\beta^2 {h}^2} \norm{P_0}^2 + {\beta^2 \gamma {h}^2} \sup_{t \in [0, {h})} \norm{B_t}^2\,.
\end{align*}
Substituting this into our Girsanov bound and using the standard supremum bounds on the Brownian motion (Lemma~\ref{lem:brownian-subgaussian}) tells us that, using AM--GM (or equivalent Cauchy--Schwarz), for some absolute constant $C > 0 $,
\begin{align}\label{eq:dmd-one-step-renyi-bd}
\begin{aligned}
    \log &\exp\Bigl( \int_0^{h} \frac{q^2}{\gamma} \norm{\eia(t, {h}) \lambda_1 + \eib(t, {h}) \lambda_2}^2 \, \D t\Bigr) \\
    &\lesssim \log \E_{\mathbf P} \exp\Bigl(\frac{C \beta^2 {h}^5 q^2}{\gamma} \norm{\nabla V(X_0)}^2 \Bigr) +  \log \E_{\mathbf P} \exp\Bigl(\frac{C \beta^2 {h}^3 q^2}{\gamma} \norm{P_0}^2 \Bigr) + {\beta^2 d{h}^4 q^2 }\,.
\end{aligned}
\end{align}

\paragraph{Multistep bounds}
Using~\eqref{eq:dmd-one-step-renyi-bd}, we obtain the equation for the full paths over $N$ iterations, using Lemmas~\ref{lem:subgsn-score} and~\ref{lem:renyi-change-measure} (as well as the subgaussianity of $\norm{P}$ under $\pi$),
\begin{align*}
\begin{aligned}
    \Renyi_q(\mathbf P \mmid \mathbf Q)
    &\lesssim \frac{1}{q} \sup_{k \in [N]} \log \E_{\mathbf P} \exp\Bigl(\frac{C \beta^2 {h}^5 q^2 N}{\gamma} \norm{\nabla V(X_{k{h}})}^2 \Bigr) \\
    &\qquad + \frac{1}{q} \sup_{k \in [N]} \log \E_{\mathbf P} \exp\Bigl(\frac{C \beta^2 {h}^3 q^2 N}{\gamma} \norm{P_{k{h}}}^2 \Bigr) + \beta^2 d {h}^4 q N \\
    &\lesssim \frac{\beta^3 {h}^5 q N}{\gamma}\Bigl(d + \sup_{k \in [N]} \Renyi_{2}(\bs \mu_{k{h}} \mmid \bs \pi)  \Bigr) + \frac{ \beta^2 {h}^3 q N}{\gamma}\Bigl(d + \sup_{k \in [N]} \Renyi_{2}(\bs \mu_{k{h}} \mmid \bs \pi) \Bigr) + {\beta^2 {h}^4 d q N}\,.
\end{aligned}
\end{align*}
As we can bound, using Lemma~\ref{lem:renyi-triangle}, the stationarity of $\bs \pi$ under $\mathbf Q$ and the data-processing inequality,
\begin{align*}
    \Renyi_2(\bs \mu_t \mmid \bs \pi) &\lesssim \Renyi_4(\bs \mu_t \mmid \tilde{\bs \mu}_t) + \Renyi_3(\tilde {\bs \mu}_t \mmid \bs \pi) \lesssim \Renyi_4(\mathbf P \mmid \mathbf Q) + \Renyi_3(\bs \mu_0 \mmid \bs \pi)\,,
\end{align*}
Again, if we assume $q \geq 4$, and ${h} \leq \frac{1}{\beta^{1/2}} \wedge \frac{\gamma^{1/2}}{\beta q^{1/2} T^{1/2}}$, then we can absorb the first term $\Renyi_4(\mathbf P \mmid \mathbf Q)$ into the left side of our bound. This then allows us to bound, noting that the $\beta^2 d h^4 q N$ term will now be higher order,
\begin{align*}
    \Renyi_q(\mathbf P \mmid \mathbf Q) \lesssim \frac{\beta^2 {h}^3 q N}{\gamma} \Bigl(d + \Renyi_3(\bs \mu_0 \mmid \bs \pi) \Bigr)\,.
\end{align*}

\end{proof-of-lemma}

\begin{proof-of-lemma}[Lemma~\ref{lem:dmd-local-error}]
Throughout the present proof, we assume that $\delta \in (0, \frac{1}{c})$ for some absolute constant $c$. Recall that if $(X_t)_{t \in [0, {h})}$ is obtained from~\eqref{eq:ULD} after time ${h}$, and $X^\alg_{h}$ is obtained from a single step of~\eqref{eq:DM--ULMC-multistep}, then 
\begin{align*}
    \norm{X_{h} - X_{h}^\alg}^2 &\lesssim \norm{\int_0^{h} \eib(t, {h}) \{\nabla V(X^-) - \nabla V(X_t)\} \, \D t}^2\,, \\
    \norm{P_{h} - P_{h}^\alg}^2 &\lesssim \norm{\int_0^{h} \eia(t, {h}) \{\nabla V(X^+) - \nabla V(X_t)\} \, \D t}^2\,,
\end{align*}
Here, we note that $X^-$ and $X^+$ can be determined using the same initial point and Brownian motion. We refer to the first term as the position error, and the second as the momentum error, opting to handle them separately.
\paragraph{Momentum error:}
We first expand once using It\^o--Taylor,
\begin{align}\label{eq:expansion-momentum}
    \int_{0}^{h} &\eia(t,{h}) \nabla V(X_t) \, \D t = \int_0^{{h}} \eia(t, {h}) \nabla V(X_{{h}/2}) \, \D t + \int_0^{h} \int_{{h}/2}^t \eia(t,{h}) \nabla^2 V(X_s) P_s \, \D s \, \D t\,.
\end{align}
Now, we have,
\begin{align*}
    \norm{\int_0^{h} &\int_{{h}/2}^t \eia(t,{h}) \nabla^2 V(X_s) P_s \, \D s \, \D t}^2 \\
    &\lesssim \norm{\int_{0}^{h} \int_{{h}/2}^t \nabla^2 V(X_s) P_s \, \D s \, \D t}^2  + \norm{\int_{0}^{h} \int_{{h}/2}^t (\eia(t, {h}) - 1)\nabla^2 V(X_s) P_s \, \D s \, \D t}^2\,.
\end{align*}
We can bound, noting that $\abs{\eia(t, {h}) - 1} \lesssim {h}$, and $\norm{\nabla^2 V} \lesssim \beta$, 
\begin{align*}
    \norm{\int_{0}^{h} \int_{{h}/2}^t (\eia(t, {h}) - 1)\nabla^2 V(X_s) P_s \, \D s \, \D t}^2 &\lesssim {h}^2 \int_0^{h} \int_{{h}/2}^t \norm{(\eia(t, {h}) - 1)\nabla^2 V(X_s) P_s}^2 \, \D s \, \D t \\
    &\lesssim \beta^2 {h}^4 \int_0^{h} \int_{{h}/2}^t \norm{P_s}^2 \, \D s \, \D t\,. 
\end{align*}
With probability $1-\delta$, we know that if $\bs \mu_0$ has $\Renyi_q(\bs \mu_0 \mmid \bs \pi) \lesssim d^{1/2}$ for $q \geq 2$, then under $\tilde{\bs \mu}_t$ for all $t \geq 0$, we have with probability $1-\delta$ via Lemma~\ref{lem:high-prob-change},
\begin{align*}
    \beta^2 {h}^4 \int_0^{h} \int_{{h}/2}^t \norm{P_s}^2 \, \D s \, \D t \lesssim \beta^2 d{h}^6 + \beta^2 {h}^6 \log \frac{1}{\delta}\,.
\end{align*}
By symmetrizing the integral, we find for the other term,
\begin{align*}
    \norm{\int_{0}^{h} \int_{{h}/2}^t \nabla^2 V(X_s) P_s \, \D s \, \D t}^2
    &=\norm{\int_{{h}/2}^{h} \int_{{h}/2}^t \{\nabla^2 V(X_s) P_s - \nabla^2 V(X_{{h}-s}) P_{{h}-s}\} \, \D s \, \D t}^2\\
    &= \norm{\int_{{h}/2}^{h} \int_{{h}/2}^t \int_{{h} -s}^s \,\D \{ \nabla^2 V(X_r) P_r\} \, \D s \, \D t}^2 \\
    &\lesssim {h}^2 \int_{{h}/2}^{h} \int_{{h}/2}^t \norm{\int_{{h} -s}^s  \,\D \{ \nabla^2 V(X_r) P_r\}}^2  \, \D s \, \D t  \,. 
\end{align*}
Now, using It\^o's lemma applied to $\nabla^2 V(X_s) P_s$ under~\eqref{eq:ULD},
\begin{align*}
    \norm{\int_{{h} -s}^s\,\D \{ \nabla^2 V(X_s) P_s\}}^2 &= \norm{\int_{{h} -s}^s\{\nabla^3 V(X_r)[P_r, P_r] \,\D r + \nabla^2 V(X_r) \, \D P_r\}}^2 \\
    &\lesssim {{h}}\int_{{h}-s}^{s}\norm{\nabla^3 V(X_r)[P_r, P_r]}^2 \,\D r  + \gamma^2 {h}  \int_{{h}-s}^{s} \norm{\nabla^2 V(X_r)P_r}^2 \,\D r \\
    &\qquad + {{h}} \int_{{h}-s}^{s}\norm{\nabla^2 V(X_r)  \nabla V(X_r)}^2 \, \D r + \gamma \norm{\int_{{h}-s}^{s}\nabla^2 V(X_r)\,\D B_r}^2 \\
    &\eqqcolon \mathrm{I} + \mathrm{II} + \mathrm{III} + \mathrm{IV}\,.
\end{align*}
Here, $B_t$ is a standard Brownian motion. For $\mathrm{I}$, we use Assumption~\ref{as:chaos-tail} to claim that under $\bs \pi$, where $P$ is a standard Gaussian uncorrelated from $X$, with probability $1-\delta$,
\begin{align*}
     \norm{\nabla^3 V(X)[P, P]}^2\lesssim \beta_H^2 d + \beta_\eff^2 d^{1/2} \log \frac{1}{\delta} + \beta_\eff^2 \log^2 \frac{1}{\delta}\,.
\end{align*}
Using Lemma~\ref{lem:high-prob-change}, under our assumption on the initial R\'enyi divergence, with probability $1-\delta$ we have
\begin{align*}
    \norm{\nabla^3 V(X)[P, P]}^2\lesssim \beta_H^2 d + \beta_\eff^2 d + \beta_\eff^2 d^{1/2} \log \frac{1}{\delta} + \beta_\eff^2 \log^2 \frac{1}{\delta}\,,
\end{align*}
Note that this implies after integration that, uniformly in $s$,
\begin{align*}
    \E_{\mathbf Q}[\mathrm{I}] \lesssim  (\beta_\eff^2 + \beta_H^2)dh^2 \,.
\end{align*}
We can immediately bound using $\norm{\nabla^2 V} \lesssim \beta$, the subgaussianity of $\norm{P}$ under $\bs \pi$, and Lemma~\ref{lem:high-prob-change} with the same initial condition on the R\'enyi, with probability $1-\delta$,
\begin{align*}
    \mathrm{II} \lesssim \beta^2 \gamma^2 d {h}^2 + \beta^2 \gamma^2 {h}^2 \log \frac{1}{\delta}\,.
\end{align*}
Furthermore, Lemmas~\ref{lem:subgsn-score} and~\ref{lem:high-prob-change} give a similar bound for the third term of
\begin{align*}
    \mathrm{III} \lesssim {\beta^3 d {h}^2 } + {\beta^3 {h}^2 \log \frac{1}{\delta}} \,.
\end{align*}
For the final term $\mathrm{IV}$, Lemma~\ref{lem:brownian-subgaussian} applied directly yields a bound with probability $1-\delta$ of
\begin{align*}
    \mathrm{IV} \lesssim \beta^2\gamma  d {h} + \beta^2 \gamma {h} \log \frac{1}{\delta}\,.
\end{align*}
Putting all of this together, we find that if ${h} \lesssim \frac{1}{\sqrt{\beta}}$, $\gamma \asymp \sqrt{\beta}$, with probability $1-\delta$,
\begin{align}\label{eq:momentum-expansion-high-prob}
\begin{aligned}
    \norm{\int_0^{h} \int_{{h}/2}^t \eia(t,{h}) \nabla^2 V(X_s) P_s \, \D s \, \D t}^2 &\lesssim \beta^2 \gamma d {h}^5 + {(\beta_H^2 + \beta_\eff^2) d {h}^6} \\
    &\qquad+ \Bigl({\beta_\eff^2 d^{1/2} {h}^6} + \beta^2 \gamma {h}^5\Bigr) \log \frac{1}{\delta} + {\beta_\eff^2 {h}^6} \log^2 \frac{1}{\delta}\,.
\end{aligned}
\end{align}
For the remaining term in~\eqref{eq:expansion-momentum}, we obtain
\begin{align*}
    \norm{\int_0^{{h}} \eia(t, {h}) \nabla V(X_{{h}/2}) \, \D t - \int_0^{{h}} \eia(t, {h}) \nabla V(X^+) \, \D t}^2 \lesssim \beta^2{h}^2 \norm{X_{{h}/2} - X^+}^2\,,
\end{align*}
and we have
\begin{align*}
    \norm{X_{{h}/2} - X^+}^2 &\lesssim \norm{\int_0^{{h}/2} \eib(t, \frac{{h}}{2}) \nabla V(X_t) \, \D t - \eic(0, \frac{{h}}{2}) \nabla V(X_0)}^2 \lesssim {\beta^2 {h}^3} \int_0^{{h}/2} \norm{X_t - X_0}^2 \, \D t \\
    &\lesssim {\beta^2 {h}^3} \int_0^{{h}/2} \Bigl\{{{h}} \int_0^t \norm{\eia(s,t) P_s \, \D s}^2 + {{h}} \int_0^t \norm{\eib(s,t) \nabla V(X_s) \, \D s}^2 \\
    &\qquad\qquad\qquad \qquad\qquad + \gamma \norm{\int_0^t \eib(s,t) \, \D B_s}^2 \Bigr\} \, \D t\,.
\end{align*}
For the first two terms, we use the same change of measure argument (Lemma~\ref{lem:high-prob-change} combined with the concentration in Lemma~\ref{lem:subgsn-score}) to bound them, while the last term uses Lemma~\ref{lem:brownian-subgaussian}. This allows us to obtain for ${h} \lesssim \frac{1}{\sqrt{\beta}}$ the bound with probability $1-\delta$,
\begin{align*}
     \norm{\int_0^{{h}} \eia(t, {h}) \nabla V(X_{{h}/2}) \, \D t - \int_0^{{h}} \eia(t, {h}) \nabla V(X^+) \, \D t}^2 \lesssim {\beta^4 {h}^8} \Bigl(d  +\log \frac{1}{\delta} \Bigr)\,.
\end{align*}
Thus, this term will be negligible.
Putting all of this together tells us that the momentum strong error is bounded with probability $1-\delta$ as
\begin{align}\label{eq:momentum-one-step-high-prob}
\begin{aligned}
    \norm{\int_0^{h} \eia(t, {h}) \{\nabla V(X^+) - \nabla V(X_t)\} \, \D t}^2 &\lesssim {\beta^2 \gamma {h}^5} \Bigl(d + \log \frac{1}{\delta} \Bigr) + {(\beta_H^2 + \beta_\eff^2) d {h}^6} \\
    &\qquad + {\beta_\eff^2 d^{1/2} {h}^6} \log \frac{1}{\delta} + {\beta_\eff^2 {h}^6} \log^2 \frac{1}{\delta}\,.
\end{aligned}
\end{align}
For the weak errors, all the above remains true if we insert the expectation into inside the expressions. However, term IV is zero since $X_r$ is adapted under~\eqref{eq:underdamped_langevin_diff}, 
\begin{align*}
    \E[\nabla^2 V(X_r) \, \D B_r] = 0\,.
\end{align*}
Thus, we default to the next term in our bound, which for $\gamma \asymp \sqrt{\beta}$ is, with probability $1-\delta$,
\begin{align}\label{eq:momentum-one-step-high-prob-weak}
\begin{aligned}
    \norm{\int_0^{h} \eia(t, {h}) \{\E \nabla V(X^+) - \E \nabla V(X_t)\} \, \D t}^2 &\lesssim {\beta^3 {h}^6} \Bigl(d + \log \frac{1}{\delta} \Bigr) + {(\beta_H^2 + \beta_\eff^2) d {h}^6}\\
    &\qquad + {\beta_\eff^2 d^{1/2} {h}^6} \log \frac{1}{\delta} + {\beta_\eff^2 {h}^6}\log^2 \frac{1}{\delta}\,.
\end{aligned}
\end{align}

\paragraph{Position error:} 
An It\^o--Taylor expansion gives
\begin{align*}
    \int_{0}^{h} &\eib(t,{h}) \nabla V(X_t) \, \D t = \int_0^{{h}} \eib(t, {h}) \nabla V(X_{{h}/3}) \, \D t + \int_0^{h} \int_{{h}/3}^t \eib(t,{h}) \nabla^2 V(X_s) P_s \, \D s \, \D t\,.
\end{align*}
Now, noting that $\abs{\eib(t, {h}) - ({h} -t)} \lesssim \gamma {h}^2$, and $\norm{\nabla^2 V} \lesssim \beta$, then we have by Cauchy--Schwarz,
\begin{align*}
    \norm{\int_0^{h} &\int_{{h}/3}^t \eib(t,{h}) \nabla^2 V(X_s) P_s \, \D s \, \D t}^2 \\
    &\lesssim \norm{\int_{0}^{h} \int_{{h}/3}^t ({h} -t)\nabla^2 V(X_s) P_s \, \D s \, \D t}^2  + \norm{\int_{0}^{h} \int_{{h}/3}^t (\eib(t, {h}) - ({h}-t))\nabla^2 V(X_s) P_s \, \D s \, \D t}^2\,.
\end{align*}
We can bound
\begin{align*}
    \norm{\int_{0}^{h} &\int_{{h}/3}^t (\eib(t, {h}) - ({h}-t))\nabla^2 V(X_s) P_s \, \D s \, \D t}^2 \\
    &\lesssim {h}^2 \int_0^{h} \int_{{h}/3}^t \norm{(\eib(t, {h}) - ({h}-t))\nabla^2 V(X_s) P_s}^2 \, \D s \, \D t 
    \lesssim \beta^2 \gamma^2 {h}^6 \int_0^{h} \int_{{h}/3}^t \norm{P_s}^2 \, \D s \, \D t\,. 
\end{align*}
With probability $1-\delta$, we know that if $\bs \mu_0$ has $\Renyi_q(\bs \mu_0 \mmid \bs \pi) \lesssim d^{1/2}$ for $q \geq 2$, then under $\tilde{\bs \mu}_t$ for all $t \geq 0$, we have with probability $1-\delta$ via Lemma~\ref{lem:renyi-change-measure},
\begin{align*}
    \beta^2 \gamma^2 {h}^6 \int_0^{h} \int_{{h}/3}^t \norm{P_s}^2 \, \D s \, \D t \lesssim \beta^2 \gamma^2 d{h}^8 + \beta^2 \gamma^2  {h}^8 \log \frac{1}{\delta}\,.
\end{align*}
By symmetrizing the integral, we find, noting that
\begin{align*}
    \int_{0}^{h} \int_{{h}/3}^t ({h}-t) \nabla^2 V(X_{h/3}) P_{h/3} \, \D s \, \D t = 0\,,
\end{align*}
then we have, expanding $\nabla^2 V(X_s) P_s$ around ${h}/3$,
\begin{align*}
    \norm{\int_{0}^{h} \int_{{h}/3}^t ({h}-t)\nabla^2 V(X_s) P_s \, \D s \, \D t}^2
    &=\norm{\int_{0}^{h} \int_{{h}/3}^t \int_{{h}/3}^s ({h} - t)\, \D[\nabla^2 V(X_r) P_r] \, \D s \, \D t}^2\\
    &\lesssim {h}^4 \int_0^{h} \int_{{h}/3}^t \norm{\int_{{h}/3}^s  \,\D [ \nabla^2 V(X_r) P_r]}^2  \, \D s \, \D t  \,. 
\end{align*}
We now again apply It\^o's lemma and change expectations to $\bs \pi$. All the terms are handled in a similar way to momentum error, gaining an extra two powers of ${h}$ as we have $\eib(t, {h})$ in place of $\eia(t, {h})$. We omit the calculations as they are straightforward. However, we note that we obtain the following intermediate result,
\begin{align}\label{eq:position-expansion-high-prob}
\begin{aligned}
    \norm{\int_0^{h} \int_{{h}/3}^t \eib(t,{h}) \nabla^2 V(X_s) P_s \, \D s \, \D t}^2 &\lesssim \beta^2 \gamma d {h}^7 + {(\beta_H^2 + \beta_\eff^2) d {h}^8} \\
    &\qquad+ \Bigl({\beta_\eff^2 d^{1/2} {h}^8} + \beta^2 \gamma {h}^7\Bigr) \log \frac{1}{\delta} + {\beta_\eff^2 {h}^8} \log^2 \frac{1}{\delta}\,.
\end{aligned}
\end{align}
Furthermore, we have
\begin{align*}
    \norm{\int_0^{{h}} \eib(t, {h}) \nabla V(X_{{h}/3}) \, \D t - \int_0^{{h}} \eib(t, {h}) \nabla V(X^-) \, \D t}^2 \lesssim \beta^2{h}^4 \norm{X_{{h}/3} - X^-}^2\,.
\end{align*}
This is bounded entirely analogous to the momentum errors. Thus, we find for the position strong errors
\begin{align}\label{eq:position-one-step-high-prob}
\begin{aligned}
    \norm{\int_0^{h} \eib(t, {h}) \{\nabla V(X^-) - \nabla V(X_t)\} \, \D t}^2 &\lesssim \beta^2 \gamma h^7 \Bigl(d + \log \frac{1}{\delta} \Bigr) + {(\beta_H^2 + \beta_\eff^2) d {h}^8} \\
    &\qquad + {\beta_\eff^2 d^{1/2} {h}^8} \log \frac{1}{\delta} + {\beta_\eff^2 {h}^8} \log^2 \frac{1}{\delta}\,.
\end{aligned}
\end{align}
The position weak errors are similarly handled, giving
\begin{align}\label{eq:position-one-step-high-prob-weak}
\begin{aligned}
    \norm{\int_0^{h} \eib(t, {h}) \{\E \nabla V(X^-) - \E \nabla V(X_t)\} \, \D t}^2 &\lesssim {\beta^3 {h}^8} \Bigl(d + \log \frac{1}{\delta} \Bigr) + {(\beta_H^2 + \beta_\eff^2) d {h}^8} \\
    &\qquad + {\beta_\eff^2 d^{1/2} {h}^8} \log \frac{1}{\delta} + {\beta_\eff^2 {h}^8} \log^2 \frac{1}{\delta}\,.
\end{aligned}
\end{align}
\end{proof-of-lemma}

\begin{proof-of-lemma}[{Theorem~\ref{thm:dmd-sharp-one-step}}]
Under $\mathbf Q$, $X^+$ will does not follow a straight-forward equation. Because of this, we first consider expansions under $\mathbf P$. This gives
\begin{align*}
    \norm{\int_0^{h} \eia(t, {h}) \nabla V(X_t)\, \D t - G^\msp_0}^2 &\lesssim \beta^2 {h}^2 \sup_{t \in [0, {h})} \norm{\int_0^{h} \{\nabla V(X_t) - \nabla V(X^+)\} \, \D t}^2 \\
    &\lesssim \beta^2 {h}^2\norm{X_{{h}/2} - X^+}^2 + \norm{\int_0^{h} \int_{{h}/2}^t\nabla^2 V(X_s) P_s\, \D s \, \D t}^2\,.
\end{align*}
Again, define $\Lambda^2 = \E_{\mathbf P}[\norm{\lambda_1}^2] + h^2 \E_{\mathbf P}[\norm{\lambda_2}^2]$. The first term is bounded by~\eqref{eq:same-time-interpolant}, which gives
\begin{align*}
    \beta^2 {h}^2 \E_{\mathbf P}[\norm{X_{{h}/2} - X^+}^2]
    &\lesssim {\beta^2 {h}^6} \Lambda^2 + \beta^4{h}^{10} \E_{\mathbf P}[\norm{\nabla V(X_0)}^2]  + \beta^4 {h}^8 \E_{\mathbf P}[\norm{P_0}^2] + \beta^4 \gamma d {h}^9\,.
\end{align*}
Using the initial R\'enyi bound and Lemma~\ref{lem:renyi-change-measure}, we can bound this by, for ${h} \lesssim \frac{1}{\sqrt{\beta}}$,
\begin{align*}
    \beta^2 {h}^2 \E_{\mathbf P}[\norm{X_{{h}/2} - X^+}^2]
    &\lesssim {\beta^2 {h}^6}\Lambda^2 + {\beta^4 d{h}^8 }\,.
\end{align*}
The second term is bounded with probability $1-\delta$ under $\mathbf P$, from~\eqref{eq:momentum-expansion-high-prob} and Lemma~\ref{lem:high-prob-change},
\begin{align*}
    \norm{\int_0^{h} \int_{{h}/2}^t \nabla^2 V(X_s) P_s\, \D s \, \D t}^2 &\lesssim \beta^2 \gamma d {h}^5 + {(\beta_H^2 + \beta_\eff^2) d{h}^6 } \\
    &\qquad+ \Bigl({\beta_\eff^2  d^{1/2}{h}^6} + \beta^2 \gamma {h}^5\Bigr) \log \frac{1}{\delta} + \beta_\eff^2 {h}^6 \log^2 \frac{1}{\delta}\,.
\end{align*}
We can integrate this expression to bound
\begin{align*}
    \E_{\mathbf P}\bigl[\norm{\int_0^{h} \eia(t, {h}) \nabla V(X_t)\, \D t - G^\msp_0}^2\bigr] &\lesssim \beta^2\gamma d {h}^5+ {(\beta_H^2 + \beta_\eff^2)d {h}^6} + {\beta^2 {h}^6} \Lambda^2 \,.
\end{align*}

\paragraph{Position errors:} This will end up being very similar in flavour to the momentum errors. 
\begin{align*}
    \norm{\int_0^{h} \eib(t, {h}) \nabla V(X_t)\, \D t - G^\msx_0}^2 &\lesssim \beta^2 {h}^4 \sup_{t \in [0, {h})} \norm{\int_0^{h} \{\nabla V(X_t) - \nabla V(X^-)\} \, \D t}^2 \\
    &\lesssim \beta^2 {h}^4\norm{X_{{h}/3} - X^-}^2 + {{h}^2}\norm{\int_0^{h} \int_{{h}/3}^t\nabla^2 V(X_s) P_s\, \D s \, \D t}^2\,.
\end{align*}
Both terms are now bounded similarly, and we obtain a similar bound as for the momentum errors but with two additional powers of ${h}$.

\paragraph{Completing the bounds:}
Putting these together, we find that
\begin{align*}
    {h} \Lambda^2 \lesssim {\beta^2 \gamma d {h}^4}+ {(\beta_H^2 + \beta_\eff^2)d {h}^5} + {\beta^2 {h}^5}\Lambda^2\,.
\end{align*}
For our choice of ${h}$, we can absorb the $\lambda$'s on the right side into the left, and so we obtain the bound
\begin{align*}
     \int_0^{h} \E \Bigl[\norm{\eia(t, {h}) \lambda_1 + \eib(t, {h}) \lambda_2}^2\Bigr] \, \D t \lesssim {\beta^2 \gamma d {h}^4} + {(\beta_H^2 + \beta_\eff^2)d {h}^5}\,.
\end{align*}
Substituting this into the $\KL$ bounds in Lemma~\ref{lem:DMD-renyi-generic} concludes the proof. 
\end{proof-of-lemma}

\subsection{Existence and approximation for DM--ULMC}\label{app:malliavin}

We state an existence result for our application of interest.
\begin{lemma}\label{lem:anticipating-SDE-existence}
    Consider the implicit equation which gives $X_{[0,h]} \in L^2_{\bf P}([0, h]; \R^d)$ in terms of $B_{[0, h]}$ through~\eqref{eq:DM--ULMC-multistep} with $N = 1$, $k = 0$.
    For the discrete approximation~\eqref{eq:DM--ULMC-approx}, the transform $\msf T$ is invertible if $h \lesssim \frac{1}{\beta^{1/2}}$ for a sufficiently small absolute constant, and the approximate solution converges to a solution $X_t$ of~\eqref{eq:DM--ULMC-multistep} in $L^2_{\mathbf P}([0, h]; \R^d)$, in the sense that $\norm{X - \hat X^\eta}^2_{L^2_{\mathbf P}([0, h]; \R^d)} \to 0$. Furthermore, a solution to~\eqref{eq:DM--ULMC-multistep} exists and is unique. 
\end{lemma}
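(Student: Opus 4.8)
The plan is to realise both~\eqref{eq:DM--ULMC-multistep} (with $N=1$, $k=0$) and its discretisation~\eqref{eq:DM--ULMC-approx} as fixed points of contraction maps on path space, working pathwise in the Brownian trajectory $\omega$. First I would observe that the only genuinely implicit dependence on the trajectory is through the two integrals $I_1 \deq \int_0^h \eia(t,h)\,\nabla V(X_t)\,\D t$ and $I_2 \deq \int_0^h \eib(t,h)\,\nabla V(X_t)\,\D t$, together with the occurrence of $\nabla V(X_t)$ inside $\msf G_t^\opt$; the interpolants $X^-, X^+$, and hence $\msf G_0^\msp, \msf G_0^\msx$, are \emph{explicit} functions of $(X_0, P_0)$ and $\omega$. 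Accordingly I would define a map $\mathcal M$ on $C([0,h];\R^{2d})$ by: given $(X_\cdot, P_\cdot)$, form $\lambda_1, \lambda_2$ from $I_1, I_2$ via the stated formulas, set $\msf G_s \deq \nabla V(X_s) - \eia(s,h)\lambda_1 - \eib(s,h)\lambda_2$, and take $\mathcal M(X,P) = (X', P')$ to be the solution (explicit by variation of constants) of the \emph{linear} system $\D X'_t = P'_t\,\D t$, $\D P'_t = -\gamma P'_t\,\D t - \msf G_t\,\D t + \sqrt{2\gamma}\,\D B_t$ with $(X'_0, P'_0) = (X_0, P_0)$. A fixed point of $\mathcal M$ is precisely a solution of~\eqref{eq:DM--ULMC-multistep}.

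The heart of the argument is showing $\mathcal M$ is a contraction for $h \lesssim \beta^{-1/2}$. Using the scalings $\sigma_{22}/\Delta_\sigma \asymp h^{-1}$, $\sigma_{21} = \sigma_{12}$, $\sigma_{21}/\Delta_\sigma \asymp h^{-2}$, $\sigma_{11}/\Delta_\sigma \asymp h^{-3}$ recorded in the proof of Lemma~\ref{lem:DMD-one-step-uld}, the bounds $|\eia(\cdot,h)| \le 1$, $|\eib(\cdot,h)| \lesssim h$, and $\beta$-Lipschitzness of $\nabla V$, two inputs $X, \bar X$ sharing the same $(X_0, P_0, \omega)$ (so $\msf G_0^\msp, \msf G_0^\msx$ cancel in differences) satisfy $\norm{I_1 - \bar I_1} \lesssim \beta h \norm{X - \bar X}_\infty$, $\norm{I_2 - \bar I_2} \lesssim \beta h^2 \norm{X - \bar X}_\infty$, hence $\norm{\lambda_1 - \bar\lambda_1} \lesssim \beta \norm{X - \bar X}_\infty$, $\norm{\lambda_2 - \bar\lambda_2} \lesssim (\beta/h) \norm{X - \bar X}_\infty$, and therefore $\sup_s \norm{\msf G_s - \bar{\msf G}_s} \lesssim \beta \norm{X - \bar X}_\infty$. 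Plugging this into the variation-of-constants representation (the $\D B$ terms cancel) and using $\gamma \asymp \beta^{1/2}$, $h \lesssim \beta^{-1/2}$ gives $\norm{P' - \bar P'}_\infty \lesssim \beta h \norm{X - \bar X}_\infty$ and then $\norm{X' - \bar X'}_\infty \lesssim \beta h^2 \norm{X - \bar X}_\infty$, a contraction once the implied constant in $h \lesssim \beta^{-1/2}$ is small. Banach's fixed point theorem then yields, for each $\omega$, a unique $X \in C([0,h];\R^d)$; the a priori bound $\norm{X}_\infty + \norm{P}_\infty \lesssim \norm{X_0} + h\norm{P_0} + \gamma^{1/2}\sup_{t \le h}\norm{B_t} + h^2 \norm{\nabla V(X_0)}$ places $X$ in $L^2_{\mathbf P}([0,h];\R^d)$; measurability is immediate since $X$ is the pointwise-in-$\omega$ limit of the measurable Picard iterates, and concatenating the $N$ one-step solutions produces the unique trajectory of~\eqref{eq:DM--ULMC-multistep} on $[0,Nh]$.

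The discretised equation~\eqref{eq:DM--ULMC-approx} has exactly the same structure with $\eta$-Riemann sums in place of the integrals, and the identical computation gives a contraction with constant $\lesssim \beta h^2$ that is \emph{uniform in} $\eta$; hence $\hat{\bs X}^\eta$ exists and is unique for $h \lesssim \beta^{-1/2}$. To prove $\norm{X - \hat X^\eta}_{L^2_{\mathbf P}([0,h];\R^d)}^2 \to 0$ I would subtract the two fixed-point identities to get $\norm{X - \hat X^\eta}_\infty \le (\text{contraction constant})\norm{X - \hat X^\eta}_\infty + \mathcal R^\eta$, where $\mathcal R^\eta$ is the discrepancy between the integrals defining $\mathcal M$ and their $\eta$-Riemann sums, both evaluated at the true solution $X$. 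Absorbing the first term, $\norm{X - \hat X^\eta}_\infty \lesssim \mathcal R^\eta$, and $\mathcal R^\eta \to 0$ pathwise because the relevant integrands are continuous in $t$ --- here I use continuity of $\omega \mapsto (X_t, P_t)$ and the uniform continuity of $\nabla^2 V$ from Assumption~\ref{as:regularity} --- while $\mathcal R^\eta$ is dominated, uniformly in $\eta$, by an $L^2_{\mathbf P}$ random variable built from $\norm{X_0}, \norm{P_0}, \norm{\nabla V(X_0)}$ and $\sup_{t \le h}\norm{B_t}$; dominated convergence upgrades the pathwise convergence to $L^2_{\mathbf P}$.

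It remains to establish invertibility of $\msf T \colon \bs\xi \mapsto \bs\xi + \Mu^\eta$, with $\Mu^\eta$ the drift difference of~\eqref{eq:difference-DM--ULMC-multistep} expressed through~\eqref{eq:interpolant-drift}. Here I would use two facts. Local invertibility: the Malliavin estimates $\norm{\MD \hat\lambda_1 \eia^\top} \lesssim \gamma^{1/2}\beta h^2$ and $\norm{\MD \hat\lambda_2 \eib^\top} \lesssim \gamma^{1/2}\beta h^2$ from the proof of Lemma~\ref{lem:DMD-one-step-uld} give $\norm{\MD \Mu^\eta}_{\mathrm{op}} = O(\beta h^2)$, so $I + \MD \Mu^\eta$ is everywhere invertible once $h \lesssim \beta^{-1/2}$ with a small constant. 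Properness: arguing exactly as in~\eqref{eq:aligned}, an a priori bound gives $\norm{\Mu^\eta} \lesssim \beta h^2 \norm{\bs\xi} + C(X_0, P_0)$, whence $\norm{\msf T(\bs\xi)} \ge (1 - O(\beta h^2))\norm{\bs\xi} - C \to \infty$ as $\norm{\bs\xi} \to \infty$; a proper local diffeomorphism of Euclidean space is a global diffeomorphism (Hadamard's theorem), so $\msf T$ is a bijection. The hard part is the contraction step: one must track the powers of $h$ accurately through the ill-conditioned matrix $\Sigma$ (with $\Delta_\sigma \asymp h^4$) so that the two sources of implicitness --- the coefficients $\lambda_1, \lambda_2$ and the $\nabla V(X_t)$ sitting inside $\msf G_t^\opt$ --- fold into a genuine contraction rather than merely a bounded map, and one must verify that the contraction constant is uniform in $\eta$ so the discrete solutions actually converge.
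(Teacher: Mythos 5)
Your proposal is correct and follows essentially the same approach as the paper: realize the implicit equation as a fixed point of a contraction on path space under $h \lesssim \beta^{-1/2}$, derive an a priori bound for the $L^2_{\mathbf P}$ membership and to dominate the Riemann-sum discrepancy, absorb the contraction constant to conclude $\hat X^\eta \to X$, and obtain invertibility of $\msf T$ from the operator-norm bound $\norm{\MD \Mu^\eta}_{\mathrm{op}} = O(\beta h^2)$ together with properness. The only cosmetic differences are that you work in the sup norm on $C([0,h];\R^{2d})$ while the paper uses the $L^2([0,h])$ norm, and that you trace the $h$-powers explicitly through the scalings of $\Sigma$ where the paper abstracts this into the bound $\norm{\MG^\opt(X) - \MG^\opt(\bar X)}_\Hb \lesssim \norm{\nabla V(X) - \nabla V(\bar X)}_\Hb$.
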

\begin{proof}
    We will keep the realizations of $X_0 = x_0$ and $B_{[0, h]}$ fixed throughout this argument. First, we show that~\eqref{eq:DM--ULMC-multistep} has a unique solution using generic principles. Define $F(X_{[0, h]})$ to be the right side of~\eqref{eq:DM--ULMC-multistep}, or more precisely, let $F(X_{[0,h]}) \in \Hb$ with
    \begin{align*}
        F_t(X_{[0,h]}) = X_0 + \eib(0, t) P_0 - \int_0^t \eib(s, t) \MG_s^\opt(X_{[0, h]}) \, \D s + \sqrt{2\gamma} \int_0^t \eib(s,t) \,\D B_s\,. 
    \end{align*}
    Indeed, if $h$ is sufficiently small, consider two candidate solutions $X_{[0, h]}$, $\bar X_{[0, h]}$ for the fixed point equation  with the same initial starting point, $X_0 = \bar X_0$. Letting $P_{[0, h]}, \bar P_{[0, h]}$ be the corresponding momentum processes, recall that $P$ should solve its own fixed point equation (viewing $P_{[0, h]}$ as deterministic given $X_{[0, h]}$ and $B_{[0, h]}$),
    \begin{align*}
        P_t = \eia(0, t)P_0 - \int_0^t \eia(s, t) \MG_s^\opt(X_{[0, h]}) \, \D s + \sqrt{2\gamma} \int_0^t \eia(s,t) \,\D B_s\,. 
    \end{align*}
    we have, using that
    \begin{align*}
        \norm{\MG_{[0,h]}^\opt(X_{[0,h]}) - \MG_{[0,h]}^\opt(\bar X_{[0,h]})}^2_\Hb \lesssim \norm{\nabla V(X)_{[0,h]} - \nabla V(\bar X)_{[0,h]}}^2_{\Hb}\,,
    \end{align*}
    from which we get
    \begin{align*}
        \norm{P_{[0, h]} - \bar P_{[0, h]}}^2_{\Hb} &\lesssim {h^2} \norm{\nabla V(X)_{[0, h]} - \nabla V(\bar X)_{[0, h]}}^2_\Hb \\
        &\lesssim {\beta^2 h^2} \norm{X_{[0, h]} - \bar X_{[0, h]}}^2_\Hb\,.
    \end{align*}
    Furthermore,
    \begin{align*}
        \norm{X_{[0, h]} - \bar X_{[0, h]}}^2_{\Hb} &\lesssim {h^2} \norm{P_{[0, h]} - \bar P_{[0, h]}}^2_{\Hb}\,.
    \end{align*}
    Putting these together says that if $h\lesssim \frac{1}{\beta^{1/2}}$ for a small enough implied constant, then $F$ is a contraction on $\R^{d}$ with constant $c < 1/2$, and furthermore, via a fixed point theorem, that the solution must be unique.
    As a result, denote by $(X^*_{[0, h]}, P^*_{[0, h]})$ the solution to~\eqref{eq:DM--ULMC-multistep}.

    We also show the following stability result: for $X_{[0, T]} \in L^2_{\mathbf P}([0, h]; \R^d)$ with $X_0 = x_0$, we have
    \begin{align*}
        \norm{F(X_{[0, h]}) - X_{[0, h]}}^2_\Hb &\geq \frac{1}{2}\norm{X_{[0, h]} - X_{[0, h]}^*}^2_\Hb - \norm{F(X_{[0, h]}) - F(X_{[0, h]}^*)}^2_{\Hb} \\
        &\geq \frac{1}{2} \norm{X_{[0, h]} - X_{[0, h]}^*}^2_\Hb - c\norm{X_{[0, h]} - X_{[0, h]}^*}^2_{\Hb}\,,
    \end{align*}
    where $c < 1/2$ is the implied contraction constant from earlier. As a result, we obtain 
    \begin{align*}
        \norm{F(X_{[0, h]}) - X_{[0, h]}}^2_\Hb &\gtrsim \norm{X_{[0, h]} - X_{[0, h]}^*}^2_\Hb\,.
    \end{align*}
    
    For the second part of the claim, we proceed by a direct analysis. Define $F_\eta(X_{[0, h]})$ to be the right side of the approximation in~\eqref{eq:DM--ULMC-approx}. For processes $\hat X_{[0, h]}^\eta$ which are constants on intervals $[k\eta, (k+1)\eta)$, by comparison of~\eqref{eq:DM--ULMC-approx} and~\eqref{eq:DM--ULMC-multistep}, we get
    \begin{align}\label{eq:f-diff}
    \begin{aligned}
        \norm{F(\hat X^\eta_{[0, h]}&) - F_\eta(\hat X^\eta_{[0, h]})}^2_\Hb \\
        &\lesssim {\gamma h^3} \sup_{k \in [h/\eta]} \sup_{s \in [0, \eta]} \norm{B_{k\eta + s} - B_{k\eta}}^2 + {h^2} \cdot \eta^2 \norm{P_0}^2 + {h^3} \cdot \eta^2 \norm{\nabla V(\hat X^\eta)_{[0, h]}}^2_\Hb  \,.
    \end{aligned}
    \end{align}
    It remains to ensure that $\norm{\nabla V(\hat X^\eta)_{[0, h]}}^2_{\Hb}$ is bounded as $\eta \to 0$.

    We now prove this bound.
    \begin{align}\label{eq:x-approx}
        \norm{\hat X_{[0, h]}^\eta}^2_\Hb \lesssim h\norm{X_0}^2 + {h^3}\norm{P_0}^2 + {h^4}\norm{\hat \MG_{[0, h]}^\opt(\hat X_{[0, h]}^\eta)}^2_\Hb + \gamma h^3\sup_{t \in [0, h]} \norm{B_t}^2\,.
    \end{align}
    Furthermore, we can bound very crudely (along the lines of the proof of Lemma~\ref{lem:DMD-one-step-uld})
    \begin{align*}
        \norm{\hat \MG_{[0, h]}^\opt(\hat X_{[0, h]}^\eta)}^2_\Hb &\lesssim \norm{\nabla V(\hat X^\eta)_{[0, h]}}^2_{\Hb} \\
        &\lesssim \norm{\nabla V(\hat X^\eta)_{[0, h]} - \nabla V(0)_{[0, h]}}^2_{\Hb} + \norm{\nabla V(0)_{[0, h]}}^2_\Hb \\
        &\lesssim \beta^2\norm{\hat X_{[0, h]}^\eta}^2_{\Hb} + \norm{\nabla V(0)_{[0, h]}}^2_\Hb \,.
    \end{align*}
    Absorbing $\norm{\hat X_{[0, h]}^\eta}^2_\Hb$ into the left side of~\eqref{eq:x-approx} for $h \lesssim \frac{1}{\beta^{1/2}}$, we find that all the quantities on the right remain bounded above by some constant as we take $\eta \to 0$, so we can ensure that $\norm{\hat X_{[0, h]}^\eta}^2_\Hb = O(h)$ in this limit, which also implies bounds on $\norm{\nabla V(\hat X^\eta)_{[0, h]}}_\Hb^2$.
    
    Thus, we have, using stability and invariance of $\hat X^\eta$ under $F_\eta$, as well as the temporal approximation result to get
    \begin{align*}
        \norm{\hat X^\eta_{[0, h]} - X^*_{[0, h]}}^2_\Hb &\lesssim \norm{F(\hat X^\eta_{[0, h]}) - \hat X^\eta_{[0, h]}}^2_\Hb \\
        &\lesssim \norm{F(\hat X^\eta_{[0, h]}) - F_\eta(\hat X^\eta_{[0, h]})}^2_\Hb\,.
    \end{align*}
    It remains to show that this goes to $0$ as $\eta \to 0$ in appropriate sense. For $L^2_{\mathbf P}$ convergence, we note that
    \begin{align*}
        &\E_{\mathbf P}[\sup_{k \in [h/\eta]} \sup_{s \in [0, \eta]} \norm{B_{k\eta + s} - B_{k\eta}}^2] = \widetilde O(\eta)\,, \\
        &\E_{\mathbf P}[\norm{\nabla V(\hat X^\eta)_{[0, h]}}_\Hb^2] \lesssim {\gamma \beta^2 h^3} \E[\sup_{t \in [0, h]} \norm{B_t}^2] + \operatorname{const.} \lesssim {\gamma \beta^2 h^4 d} + \operatorname{const.}
    \end{align*}
    and thus the limit can be strengthened to $L^2_{\mathbf P}([0, T]; \R^d)$. Here, the constant terms depend on $\norm{X_0}, \norm{P_0}, \norm{\nabla V(0)}$ but will not affect the asymptotics as $\eta \to 0$. Substituting this into~\eqref{eq:f-diff} shows the convergence as $\eta \to 0$.

    Finally, we discuss invertibility of $\msf T$. Earlier, we showed boundedness of the operator norm $q \MD \Mu^\eta$, and so we need only show properness of the transform $\msf T$ induced by $\Mu^\eta$. This follows as, using the analysis of Lemma~\ref{lem:DMD-one-step-uld} when $h \lesssim \frac{1}{\beta^{1/2} q^{1/2}}$,
    \begin{align*}
        \norm{\tilde{\bs \xi}}^2 &\geq \norm{\bs \xi}^2 - q^2 \cdot O\Bigl(\norm{\hat \MG^\opt}^2 \Bigr)
        \geq \norm{\bs \xi}^2 - q^2 \cdot O\Bigl(\beta^2 h^4 \norm{\bs \xi}^2\Bigr) - \operatorname{const.},
    \end{align*}
    where the $\operatorname{const.}$ terms are independent of $\norm{\bs \xi}$. This shows that $\msf T$ is proper, and therefore invertible as required.
\end{proof}

One can fortify the lemma above in various ways, for instance by generalizing to other types of sufficiently smooth drifts, or incorporating a time-dependent diffusion coefficient. See~\cite[\S3.3.3]{nualart2006malliavin} for a more thorough discussion.

We also prove a useful corollary. It is a generalization of the exponential martingale property or Dol\'eans--Dade exponential in the standard It\^o calculus.
\begin{corollary}\label{cor:doleans-dade}
    Let $\bar \Mu \in L^2_{\bf P}([0, T]; \R^d)$, interpreted in~\eqref{eq:transformation-generic}, yield a well-defined, invertible transformation of $\omega$. Then,
    \begin{align*}
        \E_{\bf P} \Bigl[\abs{\Mdet(I - \MD \bar \Mu)(\omega)}\exp\Bigl(\Mdel \bar \Mu - \frac{1}{2} \norm{\bar \Mu}_{\Hb}^2 \Bigr) \Bigr] = 1\,.
    \end{align*}
\end{corollary}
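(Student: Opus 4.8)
The plan is to read the identity directly off the anticipating Girsanov theorem (Theorem~\ref{thm:anticipating-girsanov}), applied not to $\bar\Mu$ but to the drift $-\bar\Mu$. First I would observe, using only that $\MD$ and $\Mdel$ act linearly on their arguments, that the integrand in the statement equals
\[
  \bigl\lvert\Mdet\bigl(I + \MD(-\bar\Mu)\bigr)(\omega)\bigr\rvert\,\exp\Bigl(-\bigl(\Mdel(-\bar\Mu)\bigr)(\omega) - \tfrac12\int_0^T\norm{(-\bar\Mu)_t(\omega)}^2\,\D t\Bigr),
\]
which is precisely the right-hand side of the conclusion of Theorem~\ref{thm:anticipating-girsanov} for the transformation of the form~\eqref{eq:transformation-generic} with anticipating drift $-\bar\Mu$ in place of $\bar\Mu$. (When $\bar\Mu$ is not piecewise constant in time, the objects $\log\Mdet(I+\MD\bar\Mu)$ and $\Mdel\bar\Mu$ are taken in the limiting sense along elementary approximations, exactly as in the conventions stated just before Theorem~\ref{thm:anticipating-girsanov}.)

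The only point that must be verified before invoking the theorem is that $-\bar\Mu$, inserted into~\eqref{eq:transformation-generic}, again induces a bijective transformation $\msf T'$ of $C_0([0,T);\R^d)$. This follows from the same reasoning that established invertibility for $\bar\Mu$: the operator-norm bound controlling the kernel $\MD(-\bar\Mu)$ coincides with the one for $\MD\bar\Mu$, while the coercivity estimate guaranteeing properness---of the type carried out in~\eqref{eq:aligned}---only uses bounds of the form $\norm{\Mu}\lesssim\norm{\bar\Mu}$ together with Cauchy--Schwarz, neither of which is sensitive to the sign of the drift. Hence $\msf T'$ is a bijection of path space, and the measure $\mathbf Q'$ under which $\msf T'\bigl((B_t)_{t\in[0,T)}\bigr)$ is a Brownian motion is well defined as the pushforward $(\msf T'^{-1})_{\#}\mathbf P$; in particular $\mathbf Q'$ is a probability measure.

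Applying Theorem~\ref{thm:anticipating-girsanov} to $\msf T'$ now gives $\mathbf Q'\ll\mathbf P$ together with the explicit density
\[
  \frac{\D\mathbf Q'}{\D\mathbf P}(\omega) = \bigl\lvert\Mdet\bigl(I - \MD\bar\Mu(\omega)\bigr)\bigr\rvert\,\exp\Bigl((\Mdel\bar\Mu)(\omega) - \tfrac12\norm{\bar\Mu(\omega)}^2_{\Hb}\Bigr),
\]
and since the density of a probability measure integrates to one against the reference measure, $\E_{\mathbf P}[\D\mathbf Q'/\D\mathbf P] = \mathbf Q'(\Omega) = 1$, which is the asserted identity. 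I do not expect a genuine obstacle here: essentially all of the work is already carried out inside Theorem~\ref{thm:anticipating-girsanov}, and what remains is the bookkeeping of the substitution $\bar\Mu\mapsto-\bar\Mu$ and the remark that this substitution does not destroy invertibility of the induced transformation. As a sanity check, when $\bar\Mu$ is adapted one has $\Mdet(I+\MD\bar\Mu)\equiv1$ and $\Mdel\bar\Mu$ collapses to the It\^o integral, so the identity reduces to the classical statement (cf.\ Theorem~\ref{thm:regular-girsanov}) that the Dol\'eans--Dade exponential $\exp\bigl(\int_0^T\inner{\bar\Mu_t,\D B_t} - \tfrac12\int_0^T\norm{\bar\Mu_t}^2\,\D t\bigr)$ has unit $\mathbf P$-expectation under Novikov's condition.
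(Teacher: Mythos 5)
Your proposal is correct and is essentially the paper's own argument, made explicit: the paper's proof of this corollary simply invokes Theorem~\ref{thm:anticipating-girsanov} to say the integrand is a genuine Radon--Nikodym derivative and hence has unit expectation, and you correctly identify that this requires applying the theorem with $-\bar\Mu$ in place of $\bar\Mu$ (since the signs on the determinant and Skorohod terms are flipped relative to the theorem's displayed formula) together with the observation that the invertibility argument used in the paper is insensitive to this sign flip. Your sanity check against the classical Dol\'eans--Dade exponential in the adapted case is a nice addition but not part of the paper's proof.
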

\begin{proof}
    This follows from Theorem~\ref{thm:anticipating-girsanov}, as under invertibility, we have $\mathbf Q \ll \mathbf P$, where $\mathbf Q$ results from $\msf T_{\#} \mathbf P$, so the object in the expectation is a genuine Radon--Nikodym derivative. 
\end{proof}

\section{Helper lemmas}

The following can be found as an intermediate bound in the proof of~\cite[Lemma 13]{chen2023does}.
\begin{lemma}[{Moments of a Gaussian chaos}]\label{lem:frobenius-gaussian}
    Suppose $H$ is a 3-tensor with operator norm bounded by, for $\beta_H, \beta_T \in \R_+$,
    \begin{align*}
        \norm{H}_{\{1,2\}, \{3\} }\leq \beta_H\,, \qquad \norm{H}_{\{1,2,3\}} \deq \sqrt{\sum_{i,j,k=1}^d H_{i,j,k}^2} \leq \beta_T
    \end{align*}
    Then, if $\xi$ is a standard Gaussian, $Z \deq \norm{H[\xi, \xi]}_2^2$, then we have
    \begin{align*}
        \E[\abs{Z-\E Z}^p]^{\frac{1}{p}} \lesssim  \beta_T^2p^2 + (\beta_T^2 + \beta_H^2 d^{1/2}) p\,,
    \end{align*}
    where the implicit constant does not depend on $p, \beta_H, \beta_T, d$.
\end{lemma}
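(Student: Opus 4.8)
The plan is to treat $Z=\sum_k(\xi^\top W_k\xi)^2$ (where $W_k\in\R^{d\times d}$ is the $k$-th slice $(W_k)_{ij}=H_{ijk}$) as an even polynomial of degree $4$ in the Gaussian $\xi$, and to exploit its Wiener chaos decomposition together with the Hanson--Wright inequality (for the second chaos part) and Gaussian hypercontractivity (for the fourth chaos part). As a preliminary reduction, since $\xi^\top W_k\xi$ only sees the symmetric part of $W_k$, I would replace each $W_k$ by its symmetric part: this leaves $Z$ unchanged and does not increase either $\norm H_{\{1,2\},\{3\}}$ or $\norm H_{\{1,2,3\}}$, so we may assume every $W_k$ symmetric. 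I would also record the identification $\beta_H=\norm H_{\{1,2\},\{3\}}=\norm{\widehat H}_{\mathrm{op}}$, where $\widehat H\in\R^{d^2\times d}$ is the matricization $\widehat H_{(ij),k}=H_{ijk}$ and the operator norm is between $(\R^d,\norm\cdot_2)$ and $(\R^{d\times d},\norm\cdot_F)\cong(\R^{d^2},\norm\cdot_2)$; in particular $\beta_H\le\beta_T:=\norm H_{\{1,2,3\}}=\norm{\widehat H}_F$, which is used repeatedly.

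Since $Z$ is even of degree $4$, its chaos expansion only carries components of orders $0,2,4$, so $Z-\E Z=\Pi_2 Z+\Pi_4 Z$, with $\Pi_2,\Pi_4$ the orthogonal projections onto the second and fourth homogeneous Wiener chaos. I would compute both explicitly. Diagonalizing a single symmetric $W$ and using the Hermite polynomials $He_2,He_4$ gives $\Pi_2\big((\xi^\top W\xi)^2\big)=\xi^\top\!\big(4W^2+2(\tr W)W\big)\xi-\tr\!\big(4W^2+2(\tr W)W\big)$, and summing over $k$ yields $\Pi_2 Z=\xi^\top C\xi-\tr C$ with $C=4\sum_k W_k^2+2\sum_k(\tr W_k)W_k$. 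Reading off the leading Wick monomials of $(\xi^\top W_k\xi)^2$ gives $\Pi_4 Z=\sum_{i,j,i',j'}T_{iji'j'}:\!\xi_i\xi_j\xi_{i'}\xi_{j'}\!:$ with $T_{iji'j'}=\sum_k(W_k)_{ij}(W_k)_{i'j'}$, i.e. $T=\widehat H\widehat H^\top$.

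Next I would bound the two matrix norms that govern everything. For $C$: by the triangle inequality and $\norm{W_k^2}_F\le\norm{W_k}_{\mathrm{op}}\norm{W_k}_F\le\norm{W_k}_F^2$ one gets $\big\lVert\sum_k W_k^2\big\rVert_F\le\sum_k\norm{W_k}_F^2=\beta_T^2$; and for $M:=\sum_k(\tr W_k)W_k$, noting that $M=\widehat H\,t$ where $t=(\tr W_k)_k=\widehat H^\top I_d$, we get $\norm M_F\le\beta_H\norm t\le\beta_H\big(\beta_H\norm{I_d}_F\big)=\beta_H^2 d^{1/2}$; hence $\norm C_F\lesssim\beta_T^2+\beta_H^2 d^{1/2}$ and a fortiori $\norm C_{\mathrm{op}}\lesssim\beta_T^2+\beta_H^2 d^{1/2}$. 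For $T$: $\norm T_F=\norm{\widehat H\widehat H^\top}_F\le\norm{\widehat H}_{\mathrm{op}}\norm{\widehat H}_F=\beta_H\beta_T\le\beta_T^2$. Now I assemble: Hanson--Wright gives $\norm{\Pi_2 Z}_p\lesssim\norm C_F\sqrt p+\norm C_{\mathrm{op}}\,p\lesssim(\beta_T^2+\beta_H^2 d^{1/2})\,p$, while hypercontractivity on the fourth chaos together with the $L^2$-orthogonality of Hermite monomials ($\norm{\Pi_4 Z}_2\lesssim\norm T_F$, up to an absolute constant) gives $\norm{\Pi_4 Z}_p\le(p-1)^2\norm{\Pi_4 Z}_2\lesssim\beta_T^2 p^2$. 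Adding the two yields $\norm{Z-\E Z}_p\lesssim\beta_T^2 p^2+(\beta_T^2+\beta_H^2 d^{1/2})\,p$, with absolute constants, as claimed.

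The main obstacle is the two explicit chaos computations in the second paragraph: the $\Pi_2$ identity and the identification $\Pi_4 Z\leftrightarrow\widehat H\widehat H^\top$ with the accompanying $L^2$-isometry up to a universal constant both require careful Hermite/Wick bookkeeping. Everything else is routine once one sees why a coarser split will not do: the naive decomposition $Z-\E Z=2\sum_k(\tr W_k)Q_k+\sum_k\big(Q_k^2-\E Q_k^2\big)$ (with $Q_k=\xi^\top W_k\xi-\tr W_k$) has a second sum whose \emph{second}-chaos component is of size $\sim\beta_H^2 d^{1/2}$, which hypercontractivity would inflate to $\beta_H^2 d^{1/2}p^2$; this is precisely why one must peel off $\Pi_2 Z$ in full and control it by Hanson--Wright (linear in $p$) rather than by hypercontractivity.
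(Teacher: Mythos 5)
Your argument is correct and complete. The paper does not actually prove Lemma~\ref{lem:frobenius-gaussian}; it simply attributes the bound to an intermediate step in the proof of \cite[Lemma 13]{chen2023does}. Your write-up therefore fills a gap the paper leaves open, and it does so cleanly.

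On the substance: the key structural step is decomposing $Z - \E Z$ into its second and fourth Wiener-chaos pieces $\Pi_2 Z + \Pi_4 Z$, bounding $\Pi_2 Z$ by Hanson--Wright (which gives the linear-in-$p$ term that would otherwise be inflated to $p^2$ by hypercontractivity) and bounding $\Pi_4 Z$ by hypercontractivity against its $L^2$ norm. I verified the Hermite computation $\Pi_2\bigl((\xi^\top W\xi)^2\bigr) = \xi^\top(4W^2 + 2(\tr W)W)\xi - \tr(4W^2 + 2(\tr W)W)$ by diagonalizing $W$ and collecting the $He_2$ coefficients; it is right. The kernel identification $\Pi_4 Z \leftrightarrow T = \widehat H\widehat H^\top$ is also right, and one can check that $\|\Pi_4 Z\|_2 = \sqrt{24}\,\|T^{\mathrm{sym}}\|_F \le \sqrt{24}\,\|T\|_F$ by counting multiset contributions, so the "$\lesssim\|T\|_F$" step holds with an explicit absolute constant. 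The norm estimates $\bigl\|\sum_k W_k^2\bigr\|_F \le \beta_T^2$, $\bigl\|\sum_k(\tr W_k)W_k\bigr\|_F \le \beta_H^2 d^{1/2}$ (via $t = \widehat H^\top I_d$, $\|t\|\le\beta_H\sqrt d$), and $\|\widehat H\widehat H^\top\|_F \le \beta_H\beta_T \le \beta_T^2$ are all correct, and since $\|C\|_{\op} \le \|C\|_F$ the Hanson--Wright moment bound indeed collapses to $(\beta_T^2 + \beta_H^2 d^{1/2})\,p$. Your closing remark about why a naive split $Z - \E Z = 2\sum_k(\tr W_k)Q_k + \sum_k(Q_k^2 - \E Q_k^2)$ would fail is a genuinely useful observation: it is precisely because $\sum_k(Q_k^2-\E Q_k^2)$ still carries a second-chaos part of order $\beta_H^2 d^{1/2}$ that one must project onto homogeneous chaoses before invoking hypercontractivity.
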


We upgrade this into a bound on the tail probabilities of the Gaussian chaos.
\begin{lemma}[Concentration of a Gaussian chaos]\label{lem:frobenius-gaussian-tail}
    Suppose $H$ is a 3-tensor with norms bounded as
    \begin{align*}
        \norm{H}_{\{1,2\}, \{3\} }\leq \beta_H\,, \qquad \norm{H}_{\{1,2,3\}} \leq \beta_T\,.
    \end{align*}
    Then, if $\xi \sim N(0, I_d)$ is a standard Gaussian, we have for an absolute constant $c > 0$,
    \begin{align*}
        \Pr\{\abs{\norm{H[\xi, \xi]}^2 - \E \norm{H[\xi, \xi]}^2} \geq t \} \leq 2\exp\Bigl(-c\min \Bigl\{\sqrt{\frac{t}{\beta_T^2}}, \frac{t}{\beta_H^2 d^{1/2} + \beta_T^2} \Bigr\} \Bigr)\,.
    \end{align*}
\end{lemma}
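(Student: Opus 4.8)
The plan is a routine conversion of the moment estimate in Lemma~\ref{lem:frobenius-gaussian} into a tail bound via Markov's inequality and an optimized choice of the moment order. Set $W \deq \norm{H[\xi,\xi]}^2 - \E\norm{H[\xi,\xi]}^2$, and abbreviate $a \deq \beta_T^2 + \beta_H^2 d^{1/2}$ and $b \deq \beta_T^2$. Lemma~\ref{lem:frobenius-gaussian} supplies an absolute constant $K$ such that $\E[\abs{W}^p]^{1/p} \le K(bp^2 + ap)$ for every $p \ge 1$. The idea is then to run Markov's inequality on $\abs{W}^p$ and optimize the free parameter $p$: for any $t > 0$,
\[
    \Pr\{\abs{W} \ge t\} \le t^{-p}\,\E[\abs{W}^p] \le \Bigl(\frac{K(bp^2 + ap)}{t}\Bigr)^p\,,
\]
so it suffices to choose $p$ with $K(bp^2 + ap) \le t/e$, which gives $\Pr\{\abs{W}\ge t\} \le e^{-p}$.

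Enforcing the two requirements $Kap \le t/(2e)$ and $Kbp^2 \le t/(2e)$ separately, both hold as soon as $p \le p_\star \deq \min\{\,t/(2eKa),\ \sqrt{t/(2eKb)}\,\}$. If $p_\star \ge 1$, taking $p = p_\star$ yields $\Pr\{\abs{W}\ge t\} \le \exp(-c\min\{t/a,\sqrt{t/b}\})$ with $c \deq \min\{(2eK)^{-1}, (2eK)^{-1/2}\}$, which is already the claimed estimate (even without the leading factor $2$). If instead $p_\star < 1$, then $\min\{t/a,\sqrt{t/b}\}$ is bounded above by an absolute constant, so after possibly shrinking $c$ once more so that $2\exp(-c\min\{t/a,\sqrt{t/b}\}) \ge 1$ throughout that range, the asserted inequality holds trivially since probabilities are at most $1$. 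Carrying the factor $2$ through exactly absorbs this edge case, and substituting back $a = \beta_T^2 + \beta_H^2 d^{1/2}$ and $b = \beta_T^2$ recovers the statement.

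There is no substantive obstacle here: the heavy lifting—the Gaussian hypercontractivity / chaos estimate—is already packaged in Lemma~\ref{lem:frobenius-gaussian}, and what remains is bookkeeping with the two-regime minimum and the small-$t$ corner. The one point worth double-checking is that Lemma~\ref{lem:frobenius-gaussian} is quoted for all real $p \ge 1$ rather than only integers, since that is what licenses the continuous optimization over $p$; were only integer moments available, one would take $p = \lceil p_\star\rceil$ and lose at most a constant factor in the exponent, leaving the conclusion unchanged.
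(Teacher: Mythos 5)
Your proposal is correct and follows essentially the same route as the paper: Markov's inequality applied to $\abs{W}^p$ with the moment bound from Lemma~\ref{lem:frobenius-gaussian}, followed by optimizing $p$ in two regimes (linear in $t$ when $t$ is small, $\sqrt{t}$ when $t$ is large), with the small-$t$ corner absorbed into the constant. Writing the optimal order as a single minimum $p_\star$ rather than branching explicitly on the value of $t$ is a cosmetic difference, and your remark about the integer-versus-real $p$ and the factor of $2$ handles an edge case the paper glosses over.
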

\begin{proof}
    Let $Z = \norm{H[\xi, \xi]}^2$. Using Chebyshev's inequality and Lemma~\ref{lem:frobenius-gaussian}, we find that,
    \begin{align*}
        \Pr\{\abs{Z - \E Z} \geq t\} \leq \frac{(C \max\{(\beta_H^2 d^{1/2} + \beta_T^2)p,  \beta_T^2 p^2\})^p }{t^p}\,.
    \end{align*}
    Here, $C$ is an absolute constant which we can assume $\geq 1$ without loss of generality.
    When $t \leq \frac{2C(\beta_T^2 + \beta_H^2 \sqrt{d})^2}{\beta_T^2}$, take $p = \frac{t}{2C(\beta_T^2 + \beta_H^2 \sqrt{d})}$ and use the first term in the maximum to find the bound
    \begin{align*}
        \Pr\{\abs{Z - \E Z} \geq t\} \leq \exp\Bigl(- \frac{ct}{\beta_H^2 d^{1/2} + \beta_T^2} \Bigr)\,.
    \end{align*}
    Otherwise, if $t > \frac{2C(\beta_T^2 + \beta_H^2 \sqrt{d})^2}{\beta_T^2}$, setting $p = \frac{\sqrt{t}}{\sqrt{2C}\beta_T}$ we use the second term in the maximum to find
    \begin{align*}
        \Pr\{\abs{Z - \E Z} \geq t\} \leq \exp\Bigl( -\frac{c\sqrt{t}}{\beta_T}\Bigr)\,.
    \end{align*}
    Putting these two together yields the bound.
\end{proof}

The following lemmas are a useful toolkit for analysis in R\'enyi divergence, and come from~\cite{chewisamplingbook}.
\begin{lemma}[{Suprema of Brownian motion, adapted from~\cite[Lemma 6.2.4]{chewisamplingbook}}]\label{lem:brownian-subgaussian}
    Let $(B_t)_{t \geq 0}$ be a $\bf P$-Brownian motion. For $\lambda, t > 0$ such that $\lambda < \frac{1}{2t}$,
    \begin{align*}
        \E \exp\Bigl(\lambda \sup_{s \in [0, t]} \norm{B_s}^2 \Bigr) \leq \Bigl(\frac{1+2\lambda t}{1-2\lambda t}\Bigr)^d\,.
    \end{align*}
\end{lemma}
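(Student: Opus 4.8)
The plan is to reduce the statement to a scalar Brownian motion and then combine the reflection principle with a layer-cake (distribution-function) computation, which is what produces the sharp constant. First I would write $B_s=(B_s^{(1)},\dots,B_s^{(d)})$ in coordinates and use that the supremum of a sum is at most the sum of the suprema:
\[
    \sup_{s\in[0,t]}\norm{B_s}^2 \;=\; \sup_{s\in[0,t]}\sum_{i=1}^d (B_s^{(i)})^2 \;\le\; \sum_{i=1}^d \sup_{s\in[0,t]}(B_s^{(i)})^2 .
\]
Since $x\mapsto e^{\lambda x}$ is increasing and the coordinate processes are independent standard one-dimensional Brownian motions,
\[
    \E\exp\Bigl(\lambda\sup_{s\in[0,t]}\norm{B_s}^2\Bigr)\;\le\;\prod_{i=1}^d\E\exp\Bigl(\lambda\sup_{s\in[0,t]}(B_s^{(i)})^2\Bigr)\;=\;\Bigl(\E\exp\bigl(\lambda\sup_{s\in[0,t]}W_s^2\bigr)\Bigr)^{\!d},
\]
with $W$ a standard scalar Brownian motion, so it suffices to prove $\E\exp(\lambda\sup_{s\in[0,t]}W_s^2)\le \frac{1+2\lambda t}{1-2\lambda t}$ whenever $0<\lambda<\frac1{2t}$.

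For the scalar bound, set $Y\deq\sup_{s\in[0,t]}\abs{W_s}$, so $\sup_{s\in[0,t]}W_s^2=Y^2$. By the reflection principle $\mathbb P(\sup_{s\in[0,t]}W_s\ge a)=2\,\mathbb P(W_t\ge a)$ for $a\ge0$, and the same identity holds for $-W$ by symmetry; since $\{Y\ge a\}\subseteq\{\sup_s W_s\ge a\}\cup\{\sup_s(-W_s)\ge a\}$, a union bound gives $\mathbb P(Y\ge a)\le 4\,\mathbb P(W_t\ge a)=2\,\mathbb P(\abs{W_t}\ge a)$. Now apply the layer-cake identity $\E g(Y)=g(0)+\int_0^\infty g'(a)\,\mathbb P(Y>a)\,da$ to the increasing function $g(a)=e^{\lambda a^2}$ (so $g(0)=1$ and $g'(a)=2\lambda a\,e^{\lambda a^2}\ge0$) and compare integrand by integrand with the same identity applied to $\abs{W_t}$:
\[
    \E e^{\lambda Y^2}\;\le\;1+2\!\int_0^\infty 2\lambda a\,e^{\lambda a^2}\,\mathbb P(\abs{W_t}>a)\,da\;=\;2\,\E e^{\lambda W_t^2}-1\;=\;\frac{2}{\sqrt{1-2\lambda t}}-1,
\]
using $W_t\sim\mathcal N(0,t)$ and the Gaussian identity $\E e^{\lambda W_t^2}=(1-2\lambda t)^{-1/2}$, which is exactly where the hypothesis $\lambda<\frac1{2t}$ enters. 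The proof is then finished by the elementary inequality $\frac{2}{\sqrt{1-u}}-1\le\frac{1+u}{1-u}$ for $u\in[0,1)$: multiplying through by $1-u>0$ reduces it to $2\sqrt{1-u}\le 2$, i.e.\ $\sqrt{1-u}\le1$. Taking $u=2\lambda t$ and raising to the $d$-th power gives the claim.

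I do not expect a genuine obstacle here — this is a standard estimate — but the one point that needs care is obtaining the \emph{exact} constant $\frac{1+2\lambda t}{1-2\lambda t}$ rather than something merely of the form $2(1-2\lambda t)^{-1/2}$. The naive bound $\E e^{\lambda Y^2}\le\E\bigl[e^{\lambda(\sup_s W_s)^2}+e^{\lambda(\sup_s(-W_s))^2}\bigr]=2\,\E e^{\lambda W_t^2}$ loses the additive $-1$ and is already off by a factor of $2$ as $\lambda\to0$, where the true value tends to $1$; retaining the atom of mass at $a=0$ only once — which is precisely what the distribution-function computation does — is what makes the constant come out. The coordinatewise reduction in the first paragraph is lossless for the stated bound, since the target is written with exponent $d$ exactly to accommodate the $d$ one-dimensional factors.
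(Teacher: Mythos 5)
Your proof is correct, and it follows what I believe is essentially the same route as the cited reference (the paper itself only cites the Chewi sampling book and does not reprove this): reduce to one dimension via $\sup_s\sum_i (B_s^{(i)})^2 \le \sum_i \sup_s (B_s^{(i)})^2$ and coordinatewise independence, bound the scalar running maximum's tail with the reflection principle, and finish with a layer-cake computation. I verified each step — the tail bound $\mathbb P(\sup_{s\le t}\abs{W_s}>a)\le 2\,\mathbb P(\abs{W_t}>a)$, the identity $\E e^{\lambda Y^2}=1+\int_0^\infty 2\lambda a\,e^{\lambda a^2}\,\mathbb P(Y>a)\,da$ (integrable under $\lambda<\frac{1}{2t}$ by the Gaussian tail), the conclusion $\E e^{\lambda Y^2}\le 2(1-2\lambda t)^{-1/2}-1$, and the algebraic inequality $\frac{2}{\sqrt{1-u}}-1\le\frac{1+u}{1-u}$ — and all are sound. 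Your remark about why the naive bound $\E e^{\lambda Y^2}\le 2\,\E e^{\lambda W_t^2}$ overshoots (it misses the $-1$, so it does not tend to $1$ as $\lambda\to 0$) correctly identifies where the layer-cake is doing real work.
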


\begin{lemma}[{R\'enyi weak triangle inequality, adapted from~\cite[Lemma 6.2.5]{chewisamplingbook}}]\label{lem:renyi-triangle}
    For $q > 1$, $\lambda \in (0,1)$, and measures $\mu, \nu, \pi$, we have
    \begin{align*}
        \Renyi_q(\mu \mmid \pi) \leq \frac{q-\lambda}{q-1} \Renyi_{q/\lambda}(\mu \mmid \nu) + \Renyi_{(q-\lambda)/(1-\lambda)}(\nu \mmid \pi)\,,
    \end{align*}
    and in particular we have
    \begin{align*}
        \Renyi_q(\mu \mmid \pi) \leq \frac{q-1/2}{q-1} \Renyi_{2q}(\mu \mmid \nu) + \Renyi_{2q-1} (\nu \mmid \pi)\,.
    \end{align*}
    For $\KL$, we have
    \begin{align*}
        \KL(\mu \mmid \pi) \leq 2\KL(\mu \mmid \nu) + \log \bigl(1+\chi^2(\nu \mmid \pi)\bigr)\,.
    \end{align*}
\end{lemma}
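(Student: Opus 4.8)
The plan is to derive all three bounds from the chain rule for Radon--Nikodym derivatives together with a single application of H\"older's inequality (for the R\'enyi bounds) or the Donsker--Varadhan / Gibbs variational formula (for the $\KL$ bound). Throughout I may assume $\mu \ll \nu \ll \pi$; otherwise some term on the right-hand side equals $+\infty$ (if $\mu\not\ll\nu$ then $\Renyi_{q/\lambda}(\mu\mmid\nu)=+\infty$, and if $\nu\not\ll\pi$ then $\Renyi_{(q-\lambda)/(1-\lambda)}(\nu\mmid\pi)=+\infty$) and the inequality is vacuous. Under this assumption $\frac{\D\mu}{\D\pi} = \frac{\D\mu}{\D\nu}\,\frac{\D\nu}{\D\pi}$ holds $\pi$-a.e.

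For the first inequality I would start from the identity
\[
    \int \Bigl(\frac{\D\mu}{\D\pi}\Bigr)^{q}\,\D\pi \;=\; \int \Bigl(\frac{\D\mu}{\D\nu}\Bigr)^{q}\Bigl(\frac{\D\nu}{\D\pi}\Bigr)^{q}\,\D\pi \;=\; \int \Bigl(\frac{\D\mu}{\D\nu}\Bigr)^{q}\Bigl(\frac{\D\nu}{\D\pi}\Bigr)^{q-1}\,\D\nu ,
\]
absorbing one power of $\D\nu/\D\pi$ into the measure. Applying H\"older on $(\Omega,\nu)$ with conjugate exponents $1/\lambda$ and $1/(1-\lambda)$ to the two factors, and converting the second integral back to one against $\pi$ using $\tfrac{q-1}{1-\lambda}+1 = \tfrac{q-\lambda}{1-\lambda}$, gives
\[
    \int \Bigl(\frac{\D\mu}{\D\pi}\Bigr)^{q}\,\D\pi \;\le\; \Bigl(\int \Bigl(\tfrac{\D\mu}{\D\nu}\Bigr)^{q/\lambda}\,\D\nu\Bigr)^{\lambda}\,\Bigl(\int \Bigl(\tfrac{\D\nu}{\D\pi}\Bigr)^{(q-\lambda)/(1-\lambda)}\,\D\pi\Bigr)^{1-\lambda}.
\]
Taking logarithms and dividing by $q-1$, the coefficient of $\Renyi_{q/\lambda}(\mu\mmid\nu)$ becomes $\lambda\cdot\tfrac{q/\lambda-1}{q-1} = \tfrac{q-\lambda}{q-1}$ and that of $\Renyi_{(q-\lambda)/(1-\lambda)}(\nu\mmid\pi)$ becomes $(1-\lambda)\cdot\tfrac{(q-\lambda)/(1-\lambda)-1}{q-1} = 1$, which is precisely the claimed inequality; all orders exceed $1$ since $q>1$ and $\lambda\in(0,1)$. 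The displayed ``in particular'' bound is the specialization $\lambda=\tfrac12$.

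For the $\KL$ bound I would split additively, $\KL(\mu\mmid\pi) = \E_\mu\log\tfrac{\D\mu}{\D\nu} + \E_\mu\log\tfrac{\D\nu}{\D\pi} = \KL(\mu\mmid\nu) + \E_\mu\log\tfrac{\D\nu}{\D\pi}$, and then control the last term via the Donsker--Varadhan inequality $\E_\mu[g] \le \KL(\mu\mmid\nu) + \log\E_\nu[e^{g}]$ applied with $g = \log\tfrac{\D\nu}{\D\pi}$. Since $\E_\nu[\D\nu/\D\pi] = \int (\D\nu/\D\pi)^2\,\D\pi = 1+\chi^2(\nu\mmid\pi)$, this yields $\E_\mu\log\tfrac{\D\nu}{\D\pi} \le \KL(\mu\mmid\nu) + \log\bigl(1+\chi^2(\nu\mmid\pi)\bigr)$, and combining the two copies of $\KL(\mu\mmid\nu)$ finishes the proof. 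The whole argument is essentially bookkeeping and there is no genuine obstacle; the only points needing care are (i) the edge cases where an absolute continuity fails, and (ii) verifying that the H\"older exponents land on exactly the R\'enyi orders in the statement, which is why I would carry out the exponent arithmetic explicitly rather than leaving it implicit.
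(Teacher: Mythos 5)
Your proof is correct. The paper does not supply its own proof of this lemma—it is stated and cited from the reference—so there is nothing in the paper to compare against; your H\"older argument (and the exponent bookkeeping landing on $q/\lambda$, $(q-\lambda)/(1-\lambda)$, with prefactors $\frac{q-\lambda}{q-1}$ and $1$) is the standard derivation used in the cited source, and your Donsker--Varadhan route for the $\KL$ bound (equivalently the $q\to1$, $\lambda=2-q$ limit of the R\'enyi inequality, using $\Renyi_2(\nu\mmid\pi)=\log(1+\chi^2(\nu\mmid\pi))$) is also a valid and standard way to obtain the last display.
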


\begin{lemma}[{Score concentration, adapted from~\cite[Lemma 6.2.7]{chewisamplingbook}}]\label{lem:subgsn-score}
    Let $\pi \propto \exp(-V)$ and assume that $\nabla V$ is $\beta$-Lipschitz. Then $\nabla V$ is $\sqrt{\beta}$-subgaussian under $\pi$, such that for all $v \in \R^d$,
    \begin{align*}
        \E_\pi \exp \inner{\nabla V, v} \leq \exp\Bigl(\frac{\beta \norm{v}^2}{2} \Bigr)\,.
    \end{align*}
    In particular, for $0 < \delta < 1/2$, with probability at least $1-\delta$ under $\pi$,
    \begin{align*}
        \norm{\nabla V}^2 \lesssim \beta d + \beta \log \frac{1}{\delta}\,.
    \end{align*}
\end{lemma}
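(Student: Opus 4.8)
The plan is to first establish the moment-generating-function bound and then derive the high-probability tail bound as a standard consequence. Fix $v \in \R^d$ and set $\phi(t) \deq \E_\pi \exp\inner{\nabla V, tv}$ for $t \in [0,1]$, so $\phi(0) = 1$ and the target is $\phi(1) \le \exp(\beta\norm{v}^2/2)$. The key tool is the Stein / integration-by-parts identity for $\pi \propto \exp(-V)$: since $\nabla \pi = -(\nabla V)\,\pi$ in the sense of densities, for sufficiently regular $h : \R^d \to \R$ we have $\E_\pi[h\,\nabla V] = \E_\pi[\nabla h]$ (the boundary term vanishing by decay of $h\pi$). Applying this with $h(x) = \exp(t\inner{\nabla V(x), v})$, whose gradient is $\nabla h(x) = t\,h(x)\,\nabla^2 V(x)\,v$, and differentiating $\phi$ under the integral sign gives
\[
\phi'(t) = \E_\pi\bigl[\inner{\nabla V, v}\,h\bigr] = \bigl\langle \E_\pi[\nabla h],\, v\bigr\rangle = t\,\E_\pi\bigl[h\,v^\top \nabla^2 V\, v\bigr] \le t\beta\norm{v}^2\,\phi(t),
\]
where the last step uses $\nabla^2 V \preceq \beta I_d$, which is precisely the $\beta$-Lipschitzness of $\nabla V$ guaranteed by Assumption~\ref{as:regularity}. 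Hence $(\log\phi)'(t) \le t\beta\norm{v}^2$, and integrating from $0$ to $1$ yields $\log\phi(1) \le \beta\norm{v}^2/2$, which is the asserted subgaussianity.

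For the ``in particular'' part, I would integrate the MGF bound against an auxiliary Gaussian direction. Let $g \sim \Gsn(0, I_d)$ be independent of $X \sim \pi$. For $\lambda > 0$ with $\beta\lambda^2 < 1$, applying the MGF bound conditionally on $g$ (with the test vector $v = \lambda g$) and then evaluating the Gaussian MGF of $\inner{\nabla V(X), \lambda g}$, we get by Tonelli
\[
\E\exp\Bigl(\tfrac{\lambda^2}{2}\norm{\nabla V(X)}^2\Bigr) = \E_g\,\E_X \exp\inner{\nabla V(X), \lambda g} \le \E_g \exp\Bigl(\tfrac{\beta\lambda^2}{2}\norm{g}^2\Bigr) = (1-\beta\lambda^2)^{-d/2}.
\]
Taking $\lambda^2 = 1/(2\beta)$ gives $\E\exp(\norm{\nabla V}^2/(4\beta)) \le 2^{d/2}$, and Markov's inequality then yields $\mathbb{P}\bigl(\norm{\nabla V}^2 \ge 4\beta(\tfrac{d}{2}\log 2 + \log\tfrac1\delta)\bigr) \le \delta$ for $\delta \in (0, 1/2)$, i.e.\ $\norm{\nabla V}^2 \lesssim \beta d + \beta\log\tfrac1\delta$ with probability at least $1-\delta$, as claimed.

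The one point needing care — and the main obstacle — is justifying the integration-by-parts step and the differentiation under the integral sign, since a priori we do not yet know that $\exp(t\inner{\nabla V, v})$ is $\pi$-integrable (this is essentially what we are proving). I would handle this by a standard truncation/approximation argument: replace $h$ by $h\,\chi_R$ for a smooth cutoff $\chi_R$ supported on a ball of radius $R$ so that all integrals are finite and boundary terms vanish, obtaining $\phi_R'(t) \le t\beta\norm v^2 \phi_R(t)$ up to an error controlled by $\E_\pi[h\inner{\nabla\chi_R, v}]$, and then let $R \to \infty$; alternatively, first prove the bound for potentials that are strongly convex outside a compact set (where integrability is clear) and approximate a general $\beta$-smooth $V$, using lower semicontinuity. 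Since this is routine and orthogonal to the main computation, I would relegate it to a short remark.
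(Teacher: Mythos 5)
Your proof is correct and is essentially the standard argument behind the cited result: the paper itself imports this lemma from~\cite[Lemma 6.2.7]{chewisamplingbook} without reproducing a proof, and your Stein/integration-by-parts ODE for the MGF followed by Gaussian smoothing and Markov is exactly the textbook route. The regularity caveat you flag (justifying integration by parts and differentiation under the integral via truncation) is the right thing to note and is handled the same way in the reference.
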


\begin{lemma}[{R\'enyi change of measure, adapted from~\cite[Lemma 6.2.8]{chewisamplingbook}}]\label{lem:renyi-change-measure}
    Suppose that under two measures $\mu, \pi$, and a test function $\phi: \Omega \to \R$, we have
    \begin{align*}
        \pi\{\phi \geq {h}\} \leq \psi({h})\,,
    \end{align*}
    for some growth function $\psi$. Then,
    \begin{align*}
        \mu\{\phi \geq {h}\} \leq \psi({h})^{(q-1)/q} \exp\Bigl(\frac{q-1}{q} \Renyi_q(\mu \mmid \pi) \Bigr)\,.
    \end{align*}
\end{lemma}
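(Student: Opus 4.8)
The plan is to realize $\mu\{\phi \geq h\}$ as an integral against $\pi$ weighted by the Radon--Nikodym derivative $\frac{\D\mu}{\D\pi}$, and then split this integral via \Holder's inequality with conjugate exponents $q$ and $q/(q-1)$. First I would dispose of the trivial case: if $\mu \not\ll \pi$ then $\Renyi_q(\mu \mmid \pi) = +\infty$ by convention and the claimed bound holds vacuously, so we may assume $\mu \ll \pi$ and write $L \deq \frac{\D\mu}{\D\pi}$.

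Next, the main computation. For any measurable event $A = \{\phi \geq h\}$ we have $\mu(A) = \int_\Omega \one_A \, \D\mu = \int_\Omega \one_A \, L \, \D\pi$. Applying \Holder with exponents $q$ (on the factor $L$) and $q/(q-1)$ (on the factor $\one_A$),
\begin{align*}
    \mu(A) \leq \Bigl(\int_\Omega L^q \, \D\pi\Bigr)^{1/q} \Bigl(\int_\Omega \one_A^{q/(q-1)} \, \D\pi\Bigr)^{(q-1)/q}\,.
\end{align*}
For the first factor, the definition of the $q$-R\'enyi divergence gives $\int_\Omega L^q \, \D\pi = \exp\bigl((q-1)\,\Renyi_q(\mu \mmid \pi)\bigr)$, so raising to the power $1/q$ produces exactly $\exp\bigl(\tfrac{q-1}{q}\,\Renyi_q(\mu \mmid \pi)\bigr)$. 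For the second factor, $\one_A^{q/(q-1)} = \one_A$ pointwise (an indicator raised to any positive power is itself), so $\int_\Omega \one_A \, \D\pi = \pi(A) = \pi\{\phi \geq h\} \leq \psi(h)$ by hypothesis, whence the factor is at most $\psi(h)^{(q-1)/q}$. Multiplying the two bounds yields the claim.

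There is essentially no obstacle here: the only points requiring a word of care are (i) noting that $q > 1$ makes $q/(q-1)$ a legitimate \Holder conjugate so that the inequality applies, and (ii) handling the degenerate case $\mu\not\ll\pi$ up front so that the right-hand side is interpreted as $+\infty$ and the inequality is trivially true. Both are routine. One could alternatively phrase the same argument through the variational ($\log$-moment generating) characterization of R\'enyi divergence, but the direct \Holder computation above is the shortest route and matches the treatment in~\cite[Lemma 6.2.8]{chewisamplingbook}.
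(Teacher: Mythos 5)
Your proof is correct and is exactly the standard H\"older argument behind~\cite[Lemma 6.2.8]{chewisamplingbook}, which the paper simply cites without reproducing. Nothing to add.
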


We will make repeated use of the following helper lemma.
\begin{lemma}[Sub-Weibull change of measure]\label{lem:high-prob-change}
    Suppose $\bs \mu_0$ satisfies $\Renyi_q(\bs \mu_0 \mmid \bs \pi) \lesssim d^{1/2}$ for $q \geq 2$. Then, if under $(X, P) \sim \bs \pi$, we have for some measurable function $f: \R^{2d} \to \R$ with probability $1-\delta$,
    \begin{align*}
        f(X, P) \lesssim \msf C_1 d + \msf C_2 d^{1/2} \log \frac{1}{\delta} + \msf C_3 \log^2 \frac{1}{\delta}\,,
    \end{align*}
    Then under $(X, P) \sim \tilde{\bs \mu}_t$ following~\eqref{eq:ULD} starting from $\bs \mu_0$ with target $\bs \pi$ for all $t \leq {h}$, we have with the same probability
    \begin{align*}
        f(X, P) \lesssim \max\{\msf C_1, \msf C_2, \msf C_3\} d + \msf C_2 d^{1/2} \log \frac{1}{\delta} + \msf C_3 \log^2 \frac{1}{\delta}\,.
    \end{align*}
\end{lemma}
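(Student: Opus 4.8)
The plan is a change-of-measure argument in R\'enyi divergence. The hypothesis is a ``sub-Weibull'' tail bound for $f$ under the stationary measure $\bs\pi$, and Lemma~\ref{lem:renyi-change-measure} converts a tail bound under one measure into a tail bound under another at the cost of a factor $\exp(\tfrac{q-1}{q}\Renyi_q)$; the only additional input needed is a bound on $\Renyi_q(\tilde{\bs \mu}_t\mmid\bs\pi)$ that is uniform in $t$. This is immediate from the data-processing inequality: writing $\tilde{\bs \mu}_t = \bs\mu_0\bs P_t$ for the semigroup $\bs P_t$ of~\eqref{eq:ULD}, and using that $\bs\pi$ is stationary so $\bs\pi\bs P_t = \bs\pi$, one gets $\Renyi_q(\tilde{\bs \mu}_t\mmid\bs\pi) = \Renyi_q(\bs\mu_0\bs P_t\mmid\bs\pi\bs P_t) \le \Renyi_q(\bs\mu_0\mmid\bs\pi) \lesssim d^{1/2}$ for all $t\ge 0$ and all $q\ge 2$.

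Next I would set up and invert the growth function. Let $C_0$ denote the absolute constant hidden in the hypothesis, so that $\bs\pi\{f\ge s\}\le\psi(s)$, where for $s\ge C_0\msf C_1 d$ the level $\psi(s)=\delta_0$ is determined by $s = C_0(\msf C_1 d + \msf C_2 d^{1/2}\log\tfrac1{\delta_0} + \msf C_3\log^2\tfrac1{\delta_0})$ (a strictly decreasing bijection $\delta_0\mapsto s$ from $(0,1)$ onto $(C_0\msf C_1 d,\infty)$). Applying Lemma~\ref{lem:renyi-change-measure} with $\mu=\tilde{\bs \mu}_t$, $\pi=\bs\pi$, $\phi=f$, together with the bound from the first step, we get for an absolute constant $c$
\[
    \tilde{\bs \mu}_t\{f\ge s\} \;\le\; \psi(s)^{(q-1)/q}\exp\Bigl(\tfrac{q-1}{q}\,\Renyi_q(\tilde{\bs \mu}_t\mmid\bs\pi)\Bigr) \;\le\; \delta_0^{(q-1)/q}\,e^{c d^{1/2}}.
\]
To force the right-hand side below a prescribed target level $\delta$, it suffices to choose $\delta_0$ with $\log\tfrac1{\delta_0} = \tfrac{q}{q-1}\bigl(\log\tfrac1{\delta}+c d^{1/2}\bigr) \le 2\log\tfrac1{\delta}+2c d^{1/2}$, using $q\ge 2$; note that then $\delta_0\le\delta$, so the restriction ``$\delta$ below an absolute constant'' in the hypothesis is preserved. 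Plugging $\log\tfrac1{\delta_0}\lesssim\log\tfrac1{\delta}+d^{1/2}$ into the defining relation for $s$ and expanding, the cross terms produce $\msf C_2 d^{1/2}\cdot d^{1/2}=\msf C_2 d$ and $\msf C_3(d^{1/2})^2=\msf C_3 d$, so the resulting threshold obeys $s\lesssim(\msf C_1+\msf C_2+\msf C_3)d+\msf C_2 d^{1/2}\log\tfrac1{\delta}+\msf C_3\log^2\tfrac1{\delta}\asymp\max\{\msf C_1,\msf C_2,\msf C_3\}\,d+\msf C_2 d^{1/2}\log\tfrac1{\delta}+\msf C_3\log^2\tfrac1{\delta}$, which is exactly the claimed bound under $\tilde{\bs \mu}_t$ (valid uniformly for $t\le h$, indeed for all $t\ge 0$).

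There is no genuine obstacle here; the content is bookkeeping, and the only point needing care is the interplay between the penalty $\exp(O(d^{1/2}))$ coming from $\Renyi_q(\tilde{\bs \mu}_t\mmid\bs\pi)\lesssim d^{1/2}$ and the sub-Weibull shape of the tail. Since that penalty multiplies the tail probability, it only shifts $\log\tfrac1\delta$ additively by $O(d^{1/2})$; because the hypothesis tail is of ``order-$\tfrac12$ Orlicz'' type (the $\msf C_3\log^2\tfrac1\delta$ term), this additive shift feeds back into the threshold as the extra $\msf C_2 d+\msf C_3 d$ contributions, i.e.\ the $\max\{\msf C_1,\msf C_2,\msf C_3\}\,d$ in the statement, while leaving the $\delta$-dependent part untouched up to constants. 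Everything else is routine once the uniform R\'enyi bound and Lemma~\ref{lem:renyi-change-measure} are in hand.
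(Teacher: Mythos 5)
Your proposal is correct and takes the same approach as the paper: data-processing to get $\Renyi_q(\tilde{\bs\mu}_t\mmid\bs\pi)\le\Renyi_q(\bs\mu_0\mmid\bs\pi)\lesssim d^{1/2}$, then Lemma~\ref{lem:renyi-change-measure}. The paper's proof is one sentence and leaves the bookkeeping (the additive $O(d^{1/2})$ shift in $\log\tfrac1\delta$ feeding back through the sub-Weibull tail shape to produce the $\max\{\msf C_1,\msf C_2,\msf C_3\}\,d$ term) to the reader; you have simply filled it in, correctly.
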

\begin{proof}
    This is straightforward from Lemma~\ref{lem:renyi-change-measure}, noting that $\Renyi_q(\tilde{\bs \mu}_t \mmid \bs \pi) \leq \Renyi_q(\bs \mu_0 \mmid \bs \pi)$ via the data-processing inequality, as both arguments evolve along~\eqref{eq:underdamped_langevin_diff} and $\bs \pi$ is stationary under~\eqref{eq:underdamped_langevin_diff}.
\end{proof}

\begin{lemma}[Linearization of $\log \det$]\label{lem:log-det}
    Consider $M \in \R^{d\times d}$ with $\norm{M}_{\op} < 1$. Then, we have the expansion
    \begin{align*}
        \log \det(I_d + M) = \tr(M) - \frac{1}{2} \tr(M^2) + O(\norm{M}^3_{\op}\, d)\,.
    \end{align*}
\end{lemma}
\begin{proof}
    This can be obtained by Taylor expansion of $t \mapsto \log \det(I_d +tM)$, which is the logarithm of a polynomial in $t$ and hence analytic.
\end{proof}

\printbibliography

\end{document}